\definecolor{darkgreen}{RGB}{10,220,10}
\definecolor{rsred}{RGB}{255,100,255}
\newtheorem{theorem}{Theorem}[section]
\newtheorem{proposition}[theorem]{Proposition}
\newtheorem{corollary}[theorem]{Corollary}
\newtheorem{claim}{Claim}[section]
\newtheorem{conjecture}[theorem]{Conjecture}
\newtheorem{observation}[theorem]{Observation}
\numberwithin{equation}{section}
\title{Degree-choosability of proper conflict-free list coloring of sparse graphs}
\author{
	Masaki Kashima\thanks{Faculty of Science and Technology, Keio University, Yokohama, Japan. Email: masaki.kashima10@gmail.com} \quad  
	Riste \v{S}krekovski\thanks{Faculty of Mathematics and Physics, University of Ljubljana, Ljubljana, Faculty of Information Studies, Novo Mesto, and Rudolfovo - Science and Technology Centre Novo Mesto, Slovenia. Email:skrekovski@gmail.com} \quad 
	Rongxing Xu\thanks{School of Mathematical and Science, Zhejiang Normal University, Jinhua, China. Email:xurongxing@zjnu.edu.cn}
}
\begin{document}
\maketitle

\begin{abstract}
    Given a graph $G$ and a mapping $f:V(G) \to \mathbb{N}$, an $f$-list assignment of $G$ is a function that maps each $v \in V(G)$ to a set of at least $f(v)$ colors. For an $f$-list assignment $L$ of a graph $G$, a proper conflict-free $L$-coloring of $G$ is a proper coloring $\phi$ of $G$ such that for every vertex $v \in V(G)$, $\phi(v) \in L(v)$ and some appears precisely once in the neighborhood of $v$. We say that $G$ is proper conflict-free $f$-choosable if for every $f$-list assignment $L$ of $G$, there exists a proper conflict-free $L$-coloring of $G$. 
    If $G$ is proper conflict-free $f$-choosable and there is a constant $k$ such that $f(v)= d_G(v)+k$ for every vertex $v$ of $G$, then we say $G$ is proper conflict-free $({\rm degree}+k)$-choosable. 
    In this paper, we consider graphs with a bounded maximum average degree.
    We show that every graph with the maximum average degree less than $\frac{10}{3}$ is proper conflict-free $({\rm degree}+3)$-choosable, and that every graph with the maximum average degree less than $\frac{18}{7}$ is proper conflict-free $({\rm degree}+2)$-choosable.
    As a result, every planar graph with girth at least $5$ is proper conflict-free $({\rm degree}+3)$-choosable, and every planar graph with girth at least $9$ is proper conflict-free $({\rm degree}+2)$-choosable.
\end{abstract}
	
\textbf{Keywords:} proper conflict-free coloring, list coloring, degree choosability, maximum average degree, planar graph
	
\section{Introduction}\label{sec:intro}
Throughout this paper, we only consider simple, finite and undirected graphs. A $k$-coloring of a graph $G$ is a map $\phi: V(G)\to \{1,2,\ldots,k\}$.
A coloring $\phi$ of a graph is called \textit{proper} if every adjacent pair of vertices receive distinct colors.
The chromatic number of a graph $G$, denoted by $\chi(G)$, is the least positive integer $k$ such that $G$ admits a proper $k$-coloring.
	
For a graph $G$, a coloring $\phi$ is called \textit{proper conflict-free} if $\phi$ is a proper coloring of $G$, and for every non-isolated vertex $v\in V(G)$, there exists a color that appears exactly once in the neighborhood of $v$.
The \textit{proper conflict-free chromatic number} of a graph $G$, denoted by $\chi_{\textrm{pcf}}(G)$, is the least positive integer $k$ such that $G$ admits a proper conflict-free $k$-coloring.
The notion of proper conflict-free coloring of a graph was introduced by Fabrici, Lu\v{z}ar, Rindo\v{s}ov\'{a} and Sot\'{a}k~\cite{Fabrici}, where they investigated the proper conflict-free colorings of planar and outerplanar graphs.

For a graph $G$ and a map $f$ from $V(G)$ to positive integers, an \emph{$f$-list assignment} of $G$ is a list $L$ of available colors for each vertices such that $|L(v)|\geq f(v)$ for every $v\in V(G)$.
For a list assignment $L$ of $G$, an $L$-coloring of $G$ is a map $\phi:V(G)\to \bigcup_{v\in V(G)}L(v)$ such that $\phi(v)\in L(v)$ for every $v\in L(v)$.
A graph $G$ is said to be \emph{$f$-choosable} if $G$ has a proper $L$-coloring for any $f$-list assignment $L$ of $G$.
A graph $G$ is said to be \emph{degree-choosable} if $G$ is $f$-choosable with $f(v)=d_{G}(v)$ for each $v \in V(G)$.
It is well known that a connected graph $G$ is not degree-choosable if and only if $G$ is a \emph{Gallai-tree}, i.e. each block of $G$ is either a complete graph or an odd cycle. 
Degree-choosable graphs have been investigated in many papers \cite{Borodin,Borodin1979,Erdos,Thomassen,Vizing}. 

Following this line, we consider the analogous concept of degree-choosability for proper conflict-free coloring.
For a constant $k$, a $({\rm degree}+k)$-list assignment of $G$ is an $f$-list assignment with $f(v)=d_G(v)+k$.
We say that a graph $G$ is \emph{proper conflict-free $({\rm degree}+k)$-choosable} if $G$ is proper conflict-free $L$-colorable for any $({\rm degree}+k)$-list assignment $L$ of $G$.
One fundamental question we asked in \cite{KSX2025-maxdegree} is whether there is an absolute constant $k$ such that every graph is proper conflict-free $({\rm degree}+k)$-choosable.
When $k$ is small, we know that $C_5$ is not proper conflict-free $({\rm degree}+2)$-choosable and that there are infinitely many graphs that are not proper conflict-free $({\rm degree}+1)$-choosable. (See Proposition 4 in \cite{KSX2026}.)
Considering these examples, we pose the following conjectures.

\begin{conjecture}[\cite{KSX2026}]\label{conj:degree plus 2}
    Every connected graph other than the 5-cycle $C_5$ is proper conflict-free $({\rm degree}+2)$-choosable.
\end{conjecture}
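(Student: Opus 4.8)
The plan is to prove Conjecture~\ref{conj:degree plus 2} by a minimal-counterexample argument, using the structural theory of ordinary degree-choosable graphs as the backbone. Suppose the conjecture fails and let $G \ne C_5$ be a connected counterexample minimizing $|V(G)|+|E(G)|$, with $L$ a $(\mathrm{degree}+2)$-list assignment admitting no proper conflict-free $L$-colouring. The opening moves are routine reductions: $G$ has no vertex of degree $0$; we may assume $G$ is $2$-connected (the cut-vertex reduction is discussed below); and we dispose directly of the smallest blocks — small complete graphs, short cycles other than $C_5$, $K_4$ minus an edge, $\Theta$-graphs — by a short finite verification, which also pins down that $C_5$ is the \emph{only} exception and anchors the induction. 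It is worth setting up the induction with a strengthened hypothesis from the start, allowing a bounded number of vertices to carry oversized lists or to be precoloured, since the reductions below produce exactly such subproblems. The core of the argument must then be a family of reducible-configuration lemmas forbidding a vertex of small degree adjacent to other small-degree vertices — the same flavour as the lemmas used for the sparse cases in this paper, but now needed \emph{without} any bound on the maximum average degree.

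For the cut-vertex reduction, observe that if $v$ is a cut vertex with blocks $B_1,\dots,B_t$ at $v$, then every vertex other than $v$ has all of its neighbours inside a single block, so only the conflict-free constraint at $v$ itself couples the blocks. I would first colour $B_2,\dots,B_t$ together with the pendant subtrees hanging off them, obtaining a proper conflict-free colouring in which $v$ receives some colour $c$, and then extend to $B_1$ so as to be proper and conflict-free on the interior of $B_1$ while giving $v$ a neighbour in $B_1$ whose colour is unique among \emph{all} of $N(v)$ — for this one only needs to avoid the at most $d_G(v)-d_{B_1}(v)$ colours already used on $v$'s neighbours outside $B_1$. A block isomorphic to $C_5$ causes no trouble in the first step, since the cut vertex on it retains its full, oversized list $L(v)$, which is strictly larger than $d_{B_j}(v)+2$. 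The extension to $B_1$ is a precoloured-vertex extension statement, and it is precisely here that $C_5$ must reappear as an explicit exception: extending a proper conflict-free colouring past a precoloured degree-$2$ vertex of $C_5$ from lists of size $4$ is impossible.

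I expect the main obstacle to be the $2$-connected case once every small-degree configuration has been excluded: there is no sparsity to feed into a discharging argument, so the finish must exploit $2$-connectedness structurally. One concrete idea is to fix a \emph{pivot assignment} $p\colon V(G)\to V(G)$ with $p(v)\in N(v)$ for each non-leaf $v$, and to seek a proper $L$-colouring with $\phi(p(v))\ne\phi(u)$ for every $u\in N(v)\setminus\{p(v)\}$; such a colouring is automatically conflict-free, while leaves and isolated vertices need no attention. The pitfall is that encoding all of these inequalities as a single auxiliary graph blows up the degree at high-degree pivots, destroying the $(\mathrm{degree}+2)$ slack, so the real work is to choose the pivots cleverly — guided by an ear decomposition or a DFS tree of $G$ — so that the constraints can be satisfied one region at a time, invoking Gallai's characterisation of degree-choosable graphs for an auxiliary graph that stays within budget almost everywhere. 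Confirming that the two spare colours absorb the extra constraints on \emph{every} irreducible $2$-connected configuration, with $C_5$ the unique point of failure, is the step I expect to require genuinely new ideas beyond the discharging technology developed in this paper.
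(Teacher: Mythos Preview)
The statement you are attempting to prove is a \emph{conjecture} in the paper, not a theorem; the paper offers no proof of it and explicitly says that it ``seems hard to completely solve.'' So there is no paper-proof to compare against, and the relevant question is simply whether your proposal actually constitutes a proof. It does not: it is an outline of a research programme, and you yourself flag the decisive gap when you write that finishing the $2$-connected case ``is the step I expect to require genuinely new ideas.'' That is exactly the open problem.

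More concretely, two of your proposed steps are not reductions to the conjecture but to strict strengthenings of it. First, your cut-vertex argument requires an \emph{extension} statement: given a block $B_1$, a vertex $v\in B_1$ with a prescribed colour, and a set of forbidden colours for the unique-neighbour of $v$, produce a proper conflict-free $L$-colouring of $B_1$ respecting these constraints. This precoloured-extension version does not follow from the inductive hypothesis (the conjecture itself), and you have not argued it independently; the case $B_1\cong C_5$ already shows it can fail, and you have not explained why that failure cannot propagate. Second, the pivot-assignment scheme asks for something strictly stronger than conflict-freeness: a \emph{designated} neighbour $p(v)$ whose colour avoids every other neighbour of $v$. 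Encoding this adds, at each vertex $w$, one auxiliary ``edge'' for every $v$ with $p(v)=w$ and every $u\in N(v)\setminus\{w\}$; even with a clever ear/DFS choice of pivots there is no reason the resulting constraint graph stays within the $d_G(w)+2$ budget at high-degree vertices, and you give no mechanism to control this. Until these two strengthenings are either proved or circumvented, the proposal is a plan with a central missing idea rather than a proof.
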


\begin{conjecture}\label{conj:degree plus 1}
    Every connected graph of minimum degree at least 3 is proper conflict-free $({\rm degree}+1)$-choosable.
\end{conjecture}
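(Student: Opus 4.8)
\medskip

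\noindent\textbf{A possible line of attack.}
We would study a minimal counterexample: let $G$ be a connected graph with $\delta(G)\ge 3$ and a $(\mathrm{degree}+1)$-list assignment $L$ such that $G$ has no proper conflict-free $L$-coloring, chosen so that $|V(G)|+|E(G)|$ is minimum. The first block of the argument is the connectivity reductions, but these are far from routine here. If $v$ is a cut vertex and $B$ an end-block, one would like to colour $G-(B\setminus\{v\})$ and $B$ separately; this fails as stated for two reasons. First, removing a block can push $v$ below degree $3$, leaving the conjectured class, so one must prove a statement that tolerates vertices of degree $\le 2$ — these are in fact harmless, since a degree-$1$ vertex is automatically conflict-free and a degree-$2$ vertex merely forces its two neighbours to differ, so such vertices can be suppressed into edges after the reduction. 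Second, and more seriously, the conflict-free \emph{witness} of $v$ may be forced to lie in one block while $v$'s neighbours in the other blocks are constrained to avoid that witness's colour; coordinating this across a cut vertex consumes essentially the one unit of list slack that ``$+1$'' provides. So the real object to induct on is a strengthened statement that allows degree-$\le 2$ vertices and carries a small amount of ``forbidden-witness-colour'' bookkeeping at a bounded set of vertices.

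\noindent Once reduced to a $2$-connected graph with $\delta\ge 3$, one hunts for reducible configurations. The natural candidates concern low-degree vertices: a degree-$3$ vertex whose neighbours all have small degree, or two adjacent degree-$3$ vertices, handled by colouring $G$ minus a suitable vertex or edge and re-colouring the deleted part last, using the extra colour of each involved list both to remain proper and to repair conflict-freeness. The parity of $3$ is convenient here: among three neighbours either all colours are distinct or exactly one colour is repeated, so a degree-$3$ vertex is conflict-free \emph{unless all three of its neighbours receive the same colour}, and the latter can be forbidden with one unit of slack at the last-coloured neighbour. For the ``dense'' part one invokes the Gallai-tree characterisation recalled above: a $2$-connected graph with $\delta\ge 3$ that is not complete is degree-choosable, which yields a proper $L$-coloring with a free colour at essentially every vertex; the plan would be to use that freedom to perturb such a colouring into a conflict-free one. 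After all reductions, the remaining ``irreducible'' graphs should form a short list of small Brooks-type exceptions (complete graphs and a handful of others), each verifiable directly.

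\noindent\textbf{The main obstacle.} I expect the genuine difficulty to be making this structural analysis close. The hypothesis $\delta(G)\ge 3$ does \emph{not} bound the maximum average degree, so a discharging argument — the engine behind the theorems of this paper in the sparse regime — cannot reach a contradiction globally: the reducible configurations one can find all involve low-degree vertices, whereas high-degree vertices, though easy in isolation, can assemble into arbitrarily complicated dense graphs with no useful local structure. Compounding this, ``appears exactly once in the neighbourhood'' is a global condition that resists the local recolouring moves on which vertex-deletion inductions rely — repairing one vertex's witness can spoil another's, and this is precisely where the naive versions of the reductions above break down (for instance, re-inserting a deleted degree-$3$ vertex can collide with the witness colours of each of its three neighbours, which one unit of slack cannot absorb). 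A successful proof therefore seems to need either a genuinely global argument (an entropy-compression or Local-Lemma-type scheme sharpened all the way to additive slack $1$, well beyond what such methods currently deliver) or a much more elaborate inductive invariant that propagates partial witness-commitments through the graph; engineering that invariant to be simultaneously strong enough to push through and weak enough to be maintained is, to me, the crux.
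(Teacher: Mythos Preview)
The statement you are addressing is Conjecture~\ref{conj:degree plus 1}, which the paper poses as an \emph{open problem} and does not prove. There is consequently no proof in the paper to compare your proposal against. Your write-up is itself not a proof but an explicit discussion of a possible strategy together with the reasons it currently fails --- and you say as much (``A possible line of attack'', ``The main obstacle'', ``I expect the genuine difficulty to be\ldots''). In that sense your assessment is accurate: the conjecture is open, and your outline correctly identifies why the methods of the present paper (reducible configurations plus discharging) do not extend to it, since $\delta(G)\ge 3$ imposes no bound on ${\rm mad}(G)$.

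If anything, the one place your sketch is over-optimistic is the suggestion that after reductions the remaining irreducible graphs ``should form a short list of small Brooks-type exceptions''. There is no evidence for this: the Gallai-tree/degree-choosability machinery produces a proper $L$-coloring with a free colour at each vertex, but perturbing such a coloring into a \emph{conflict-free} one is a global constraint-satisfaction problem, and the exceptional class need not be finite or even structurally tame. That is precisely the gap you yourself flag in the final paragraph, so your proposal is internally consistent --- it is a candid non-proof of an open conjecture, matching the paper's own status for this statement.
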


Conjecture~\ref{conj:degree plus 2} is verified for some classes of graphs such as subcubic graphs~\cite{KSX2025-maxdegree}, outerplanar graphs~\cite{KSX2026}, $K_4$-minor free graphs with maximum degree at most 4~\cite{WZ2025}, and outer-1-planar graphs with maximum degree at most 4~\cite{WZ2025}.
Although it seems hard to completely solve Conjectures~\ref{conj:degree plus 2} and \ref{conj:degree plus 1}, the following observation can be shown easily (See \cite{KSX2025-tree} for a proof).
\begin{observation}\label{obs-degeneracy}
	If a graph $G$ is $d$-degenerate for some positive integer $d$, then $G$ is proper conflict-free $({\rm degree} + d+1)$-choosable.
\end{observation}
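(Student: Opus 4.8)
The plan is to argue by induction on $|V(G)|$, the statement being vacuous when $|V(G)|\le 1$. Since $G$ is $d$-degenerate it has a vertex $v$ with $d_G(v)\le d$; put $G'=G-v$, which is again $d$-degenerate. Given any $({\rm degree}+d+1)$-list assignment $L$ of $G$, its restriction to $V(G')$ satisfies $|L(u)|\ge d_G(u)+d+1\ge d_{G'}(u)+d+1$ for every $u\in V(G')$, so it is a $({\rm degree}+d+1)$-list assignment of $G'$. By the induction hypothesis $G'$ admits a proper conflict-free $L$-coloring $\phi'$, and the only task is to choose $\phi(v)\in L(v)$ extending it to $G$.

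The routine part is that a usable colour for $v$ always exists. In picking $\phi(v)$ one must avoid: (i) the at most $d_G(v)$ colours appearing on $N_G(v)$, to keep the coloring proper; and (ii) for each neighbour $u$ of $v$, the colour currently occurring exactly once on $N_{G'}(u)$, so that $u$ remains conflict-free after $v$ is attached. This forbids at most $2d_G(v)\le 2d$ colours, whereas $|L(v)|=d_G(v)+d+1>2d_G(v)$, so some colour of $L(v)$ is admissible; with any such choice the coloring of $G$ is proper and every vertex other than $v$ is conflict-free.

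The real content — and the only place where the full slack $d+1$, rather than just $1$, is needed — is to guarantee that $v$ itself is conflict-free, i.e. that some colour occurs exactly once on $N_G(v)$. This is automatic when $d_G(v)\le 1$; but for larger $d_G(v)$ an arbitrary $\phi'$ need not have this property (its restriction to $N_G(v)$ could be constant), so the coloring of $G'$ cannot simply be used as a black box. I would handle this by carrying a strengthened statement through the induction — alongside choosability with lists of size $d_G(v)+d+1$, one also requires that the coloring produced on the smaller graph can be steered so that a prescribed set of at most $d$ vertices contains a uniquely coloured vertex — and by applying it with the prescribed set equal to $N_G(v)$ when recursing on $G'$; re-establishing this enriched hypothesis after deleting a low-degree vertex then amounts to combining the counting above with a careful local recolouring of one neighbour $a$ of $v$, chosen (and recoloured to a colour absent from the relevant second neighbourhoods) so as not to spoil the conflict-freeness of $a$'s own neighbours. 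I expect this last point — simultaneously ensuring the deleted vertex, its neighbours, and the prescribed set all behave after the recolouring — to be the main obstacle; everything else is bookkeeping with the list-size inequalities.
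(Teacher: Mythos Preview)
Your plan has a real gap at exactly the point you flag. Deleting a vertex $v$ of degree at most $d$ and colouring $G'=G-v$ by induction gives no control over the multiset $\phi'(N_G(v))$, so nothing prevents all neighbours of $v$ from receiving the same colour. Your proposed remedy --- carry a strengthened hypothesis (a prescribed set of size $\le d$ must contain a uniquely coloured vertex) and patch the rest by recolouring one neighbour $a$ of $v$ --- does not close the gap. First, the strengthened hypothesis applied to $G'$ can only control \emph{one} prescribed set, but the inductive step needs two: $N_G(v)$ (for $v$'s conflict-freeness) and the set $S$ handed down from above. Second, your recolouring of $a$ ``to a colour absent from the relevant second neighbourhoods'' would impose on the order of $\sum_{x\in N_G(a)} d_G(x)$ constraints, which can vastly exceed $|L(a)|=d_G(a)+d+1$; even the milder requirement of avoiding one witness colour $c_x$ per neighbour $x$ of $a$ fails whenever $\mathcal{U}_{\phi'}(x,G')=\{\phi'(a)\}$, since then the new colour of $a$ must differ from every colour in $\phi'(N_{G'}(x)\setminus\{a\})$. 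You yourself call this ``the main obstacle'', and indeed it is not resolved.

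The argument the paper has in mind is not a black-box induction but a direct greedy along a degeneracy ordering. Fix $v_1,\dots,v_n$ with $|N_G(v_i)\cap\{v_1,\dots,v_{i-1}\}|\le d$ for every $i$, and colour $v_1,\dots,v_n$ in this order, maintaining the invariant that for every vertex $u$ with at least one coloured neighbour, some colour appears exactly once among the coloured neighbours of $u$. When colouring $v_i$ you must avoid (i) the colours on its at most $d$ already-coloured neighbours, and (ii) for each $u\in N_G(v_i)$ that already has a coloured neighbour, one witness colour $c_u$ currently unique there --- at most $d_G(v_i)$ further constraints. This forbids at most $d_G(v_i)+d$ colours from a list of size $d_G(v_i)+d+1$, so a colour is always available and the invariant persists; at termination every neighbourhood contains a uniquely coloured vertex. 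The key difference from your approach is that the greedy tracks this invariant for \emph{all} vertices simultaneously --- including those not yet coloured --- which is precisely the information your black-box recursion on $G-v$ throws away.
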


The above bound is merely an observation, but reducing it by one, as we found, could be quite challenging. We believe that it holds with the single exception of the $5$-cycle $C_5$, as follows.
	
\begin{conjecture}[\cite{KSX2025-tree}]\label{conj:degenericity}
	If a connected graph $G\ne C_5$ is $d$-degenerate for some positive integer $d$, then $G$ is proper conflict-free $({\rm degree}+d)$-choosable.
\end{conjecture}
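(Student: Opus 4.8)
To attack Conjecture~\ref{conj:degenericity}, I would proceed by a minimal-counterexample induction, following the scheme behind the bounded maximum-average-degree theorems of this paper. Suppose $G$ is a connected $d$-degenerate graph with $G\neq C_5$ and $L$ a $({\rm degree}+d)$-list assignment admitting no proper conflict-free $L$-coloring, chosen with $|V(G)|$ minimum and then $|E(G)|$ minimum; deleting superfluous colors we may assume $|L(v)|=d_G(v)+d$ for every $v$. The first step is to reduce to $G$ being $2$-connected: at a cut vertex $x$ one colors the blocks separately by minimality, then glues by permuting colors inside all but one block so that $x$ receives a common color and the conflict-free witness of $x$ obtained in one block survives the colors contributed by the others. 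The delicate point is that a block may itself be a clique or an odd cycle, or exactly $C_5$, so one really wants to carry along a \emph{dichotomy} --- ``$G$ is proper conflict-free $({\rm degree}+d)$-choosable, or $G$ belongs to a short explicit list of obstructions'' --- the $C_5$ in the statement being the smallest such obstruction, in perfect analogy with the Gallai-tree characterization of ordinary degree-choosability recalled above. Since $G$ is $d$-degenerate, it contains a vertex $v$ with $d_G(v)=\delta(G)\le d$, and the whole argument turns on removing and re-inserting such a $v$.

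For the inductive step one colors $G-v$ (which is connected because $G$ is $2$-connected; the finitely many configurations with $G-v=C_5$ are treated directly) and extends the coloring to $v$. Two things must hold. First, $v$ must be conflict-free, i.e. some color must occur exactly once on $N(v)$; this is automatic if $d_G(v)\le 1$ (the unique neighbor is its own witness) or if $d_G(v)=2$ and $v$ lies on a triangle (its two neighbors then receive distinct colors), but in general it forces us to choose the coloring of $G-v$ so that $N(v)$ is not "saturated". Second, we must pick $\phi(v)\in L(v)$ avoiding the colors of $N(v)$ (for properness) and avoiding one witness color of each neighbor of $v$ (so that those neighbors remain conflict-free); in the hard case $N(v)$ is independent, so these are up to $2d_G(v)$ \emph{distinct} forbidden colors against a list of size $d_G(v)+d$, which is tight precisely when $d_G(v)=d$. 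So even the re-insertion needs slack --- obtained, say, by arranging in $G-v$ that some neighbor of $v$ has two witnesses, or by allowing a short, bounded recoloring cascade around $v$. The upshot is that Conjecture~\ref{conj:degenericity} reduces to a \emph{strengthened} colorability statement about the smaller graph: a proper conflict-free $L$-coloring exists that is, in addition, suitably flexible on a prescribed small independent set, unless $G$ lies among the exceptions.

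Establishing that strengthened statement --- equivalently, classifying the connected $d$-degenerate graphs that are \emph{not} proper conflict-free $({\rm degree}+d)$-choosable, a conflict-free analogue of the Gallai-tree characterization --- is where I expect the real difficulty, and is why the conjecture is still open. I would try to prove it via a carefully designed elimination order refining the $d$-degeneracy order, coloring greedily with a rule that at each vertex either locks in a witness among the already-colored neighbors or reserves a single forbidden color for one specific not-yet-colored neighbor, and then controlling these reservations by a charging argument so that no list is ever exhausted; the standing danger, which defeats the naive versions, is a snowball of reservations piling up at a high-degree vertex. Failing a full classification, the honest fallback is what this paper actually does: impose $\mathrm{mad}(G)<c$, prove a short list of unavoidable configurations, each reducible by the removal-and-re-insertion template above, and discharge to a contradiction. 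This yields Conjecture~\ref{conj:degenericity} for $d$-degenerate graphs that are moreover sparse --- strengthening Observation~\ref{obs-degeneracy} by one in that range --- while the unrestricted statement awaits the missing structure theorem.
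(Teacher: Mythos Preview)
The statement you are addressing is Conjecture~\ref{conj:degenericity}, which the paper does \emph{not} prove: it is quoted as an open problem from~\cite{KSX2025-tree}, and the paper's contribution is only to verify special cases via Theorems~\ref{thm:mad 10 over 3} and~\ref{thm:mad 18 over 7}. Your write-up is likewise not a proof but a strategy outline, and you say so explicitly (``is why the conjecture is still open'', ``the unrestricted statement awaits the missing structure theorem''). So there is no proof in the paper to compare your proposal against; your concluding fallback --- impose a $\mathrm{mad}$ bound, find reducible configurations, discharge --- is precisely what the paper carries out.

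That said, one step in your sketch is genuinely wrong rather than merely incomplete. In the cut-vertex reduction you propose to glue the colorings of the blocks ``by permuting colors inside all but one block so that $x$ receives a common color''. This is fine for ordinary $k$-coloring, where the color set is global and the symmetric group acts, but it fails for \emph{list} coloring: each vertex has its own list, and a permutation of colors need not send an $L$-coloring to an $L$-coloring. Reducing to the $2$-connected case in a list setting therefore requires a different mechanism --- typically one fixes the color of the cut vertex first and proves that each block can be extended from that precolored vertex, which demands a stronger inductive hypothesis than the bare statement of the conjecture. This is a standard pitfall in moving arguments from chromatic number to choosability, and it means that even the ``easy'' part of your outline does not go through as written.
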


Conjecture~\ref{conj:degenericity} has been verified for $d=1$ \cite{KSX2025-tree} and some special families of graphs. For example, it is well known that planar graphs are $5$-degenerate and outerplanar graphs are $2$-degenerate. We show that every planar graph is proper conflict-free $({\rm degree}+5)$-choosable \cite{KSX2025-planar} and that every connected outerplanar graph other than $C_5$ is proper conflict-free $({\rm degree}+2)$-choosable \cite{KSX2026}. 

In this paper, we consider graphs with a bounded maximum average degree.
For a graph $G$, the \emph{maximum average degree}, denoted by ${\rm mad}(G)$, is the maximum value of $\frac{2|E(H)|}{|V(H)|}$ where the maximum is taken among all subgraphs $H$ of $G$.
The maximum average degree is known as a parameter to estimate the sparsity of a graph, and hence there are many results on colorings of graphs with a bounded maximum average degree.
Our main results are as follows, and we give proofs of Theorems~\ref{thm:mad 10 over 3} and \ref{thm:mad 18 over 7} in Sections~\ref{sec:mad 10 over 3} and \ref{sec:mad 18 over 7}, respectively.

\begin{theorem}\label{thm:mad 10 over 3}
    Every graph $G$ with ${\rm mad}(G)<\frac{10}{3}$ is proper conflict-free $({\rm degree}+3)$-choosable.
\end{theorem}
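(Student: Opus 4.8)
The plan is to argue by contradiction via the standard discharging method. Suppose $G$ is a counterexample with the fewest vertices; that is, $G$ has $\mathrm{mad}(G)<\tfrac{10}{3}$, there is a $(\mathrm{degree}+3)$-list assignment $L$ admitting no proper conflict-free $L$-coloring, and $|V(G)|$ is minimum. First I would record basic reducibility facts: every vertex has degree at least $2$ (a vertex of degree $\le 1$ can be deleted, the rest coloured by minimality, and then extended greedily since each vertex sees at most $d_G(v)$ already-used colours among its $\le d_G(v)$ neighbours while having $d_G(v)+3$ available, leaving room to also fix the conflict-free condition at $v$ and at its at most one neighbour). The crux of the extension arguments throughout is the slack of $3$: when we put back a small configuration, each returned vertex $u$ has $d_G(u)+3$ colours, must avoid the (at most $d_G(u)$) colours on already-coloured neighbours, and additionally we must guarantee that $u$ and each of $u$'s coloured neighbours has a uniquely-appearing colour in its neighbourhood. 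Since a uniquely-seen colour is ``spoiled'' only if two neighbours share it, the extra $2$–$3$ colours per vertex are what make these local repairs possible; I would isolate this as a lemma of the form ``if $H$ is obtained from $G$ by deleting/identifying a bounded set $S$ and $G-S$ (or the minor) has a good colouring, then under explicit degree bounds on $S$ the colouring extends.''

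Next I would build the list of reducible configurations. Because $\mathrm{mad}(G)<\tfrac{10}{3}$, a minimal counterexample should contain no: (a) vertex of degree $\le 1$; (b) two adjacent $2$-vertices, or more precisely a path of two $2$-vertices; (c) a $2$-vertex whose two neighbours are both ``not too big'' (e.g. a $2$-vertex adjacent to a $3$-vertex, or a $2$-vertex adjacent to two $3$-vertices), the exact threshold dictated by what the extension lemma can handle; (d) a $3$-vertex adjacent to too many $2$-vertices; and similar small-degree clusters. For each such configuration I would: delete the low-degree part (or contract an edge to a vertex of degree $2$), invoke minimality to colour the smaller graph, and extend using the slack-$3$ lemma, being careful that identifications do not create multi-edges and that the conflict-free condition is checked both at the reinserted vertices and at their neighbours (this last bookkeeping is where most of the case analysis lives).

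Then comes discharging. Assign each vertex $v$ charge $d_G(v)$; the hypothesis $\mathrm{mad}(G)<\tfrac{10}{3}$ gives total charge $\sum_v d_G(v) = 2|E(G)| < \tfrac{10}{3}|V(G)|$, so the average charge is less than $\tfrac{10}{3}$. I would then design discharging rules — the natural one being that each $2$-vertex pulls charge from its neighbours (say $\tfrac{2}{3}$ from each neighbour of degree $\ge 3$, adjusted so a $2$-vertex reaches $\tfrac{2}{3}+2+\tfrac{2}{3} \ge \tfrac{10}{3}$ using that both its neighbours are large by reducibility), possibly with a secondary rule for $3$-vertices — and show that after redistribution every vertex has charge at least $\tfrac{10}{3}$, contradicting the average. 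The threshold $\tfrac{10}{3}$ strongly suggests the accounting is tuned so that a $2$-vertex needs $2+\tfrac{4}{3}$, i.e. $\tfrac{2}{3}$ from each of two neighbours of degree $\ge 4$, so a key reducible configuration must be ``a $2$-vertex with a neighbour of degree $3$,'' and a $k$-vertex for $k\ge 4$ must survive giving away $\tfrac{2}{3}$ per $2$-neighbour, i.e. $k - \tfrac{2}{3}k = \tfrac{k}{3} \ge \tfrac{10}{3}$ fails for $k=4,\dots,9$, so additional rules or reducible configurations controlling $2$-neighbours of $4$- through $9$-vertices are needed; pinning those down is the real work.

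The main obstacle I expect is not the discharging arithmetic but producing a \emph{complete} and \emph{correct} set of reducible configurations: the conflict-free constraint couples a vertex with its neighbours' neighbourhoods, so when re-inserting a configuration one must simultaneously supply a uniquely-seen colour at each inserted vertex and repair the possibly-broken uniquely-seen colour at each boundary vertex, and with only $+3$ slack these repairs interact. Getting the degree thresholds in the configurations exactly matched to what the extension lemma delivers — and verifying that the chosen discharging rules then force every vertex above $\tfrac{10}{3}$ — is the delicate part, and I would expect the proof to spend most of its length on a careful case analysis of $2$-vertices and their (large-degree) neighbours.
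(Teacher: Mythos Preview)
Your overall strategy (minimal counterexample, reducible configurations, discharging with threshold $10/3$) matches the paper's, and your guessed configurations $\delta(G)\ge 2$ and ``no $2$-vertex adjacent to a $3^-$-vertex'' are correct. But the plan has two genuine gaps at exactly the spots you label ``the real work.'' First, you overlook that $3$-vertices also lie below the threshold ($3<10/3$) and must receive charge; the paper handles this by proving the stronger statement that \emph{no two $3^-$-vertices are adjacent} (so in particular two $3$-vertices are never adjacent), whence every $3$-vertex has only $4^+$-neighbours and an extra rule ``each $4^+$-vertex sends $1/9$ (in your units) to each $3$-neighbour'' brings $3$-vertices up to $10/3$. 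Second, the configuration that fixes your deficit for $4\le k\le 9$ is not a separate analysis per degree but one uniform statement: \emph{every $k$-vertex with $k\ge 4$ has at most $k-3$ neighbours of degree~$2$} (with the refinement that equality forces it to have no $3$-neighbours). Given this, a $k$-vertex sends $2/3$ to at most $k-3$ two-neighbours, leaving $k-\tfrac{2}{3}(k-3)=\tfrac{k}{3}+2\ge\tfrac{10}{3}$, not $\tfrac{k}{3}$ as in your estimate.

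Proving that configuration reducible needs an extension argument your plan does not anticipate: one deletes the $k$-vertex $u$ \emph{together with} all $k-2$ of its degree-$2$ neighbours, colours the rest by minimality, then colours $u$ from its $k+3$ colours avoiding the colours and unique-witnesses of the two neighbours kept in $G'$ and the colours at the far ends of the deleted $2$-vertices (at most $k+2$ constraints, so at least one colour survives). The deleted $2$-neighbours, each with $5$ colours and only two constraints from their far end, then have three free colours apiece --- enough to steer them so that some colour appears uniquely in $N(u)$. This simultaneous deletion of a star centred at $u$ is the missing idea; your vertex-by-vertex ``slack-$3$ lemma'' does not give enough control over the conflict-free witness at the centre, and the contractions you mention are not used in the paper and would only complicate the bookkeeping.
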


\begin{theorem}\label{thm:mad 18 over 7}
    Every connected graph $G$ with ${\rm mad}(G)<\frac{18}{7}$ other than $C_5$ is proper conflict-free $({\rm degree}+2)$-choosable.
\end{theorem}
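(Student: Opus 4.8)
The plan is to argue by contradiction, combining a minimal-counterexample reduction with the discharging method, in the same spirit as the proof of Theorem~\ref{thm:mad 10 over 3}. Let $G$ be a counterexample minimizing $|V(G)|+|E(G)|$: it is connected, $G\neq C_5$, $\mathrm{mad}(G)<\tfrac{18}{7}$, and some $(\mathrm{degree}+2)$-list assignment $L$ admits no proper conflict-free $L$-colouring, while every connected graph $H$ with $|V(H)|+|E(H)|<|V(G)|+|E(G)|$, $H\neq C_5$ and $\mathrm{mad}(H)\le\mathrm{mad}(G)$ \emph{is} proper conflict-free $(\mathrm{degree}+2)$-choosable. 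First I would clear away the coarse structure. One shows $\delta(G)\ge 2$: if $d_G(v)\le 1$, colour the connected graph $G-v$ by minimality (treating by direct inspection the single case in which $G-v=C_5$) and extend to $v$, the point being that a list of size $d_G(v)+2\ge 3$ always leaves a colour avoiding the colour of the at most one neighbour of $v$ and avoiding the at most one colour that currently certifies that neighbour's conflict-free condition. Next, if $G$ is $2$-regular then $G$ is a cycle $C_\ell$ with $\ell\neq 5$, and one checks directly (this is essentially known) that every cycle other than $C_5$ is proper conflict-free $4$-choosable. Hence from now on $\delta(G)=2$ and $G$ has a vertex of degree at least $3$.

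The heart of the argument is a list of reducible configurations. I would aim to establish at least the following: the $2$-vertices of $G$ form an independent set (equivalently, no two consecutive vertices both have degree $2$), and every $3$-vertex has at most one $2$-neighbour; very likely one also needs a handful of finer statements controlling which vertices a $3$-vertex having a $2$-neighbour may be adjacent to. Each reduction follows the same template: locate the configuration, delete a $2$-vertex $x$ (or a small configuration around it, so that the reduced graph stays smaller and still has maximum average degree at most $\mathrm{mad}(G)$), invoke minimality to obtain a proper conflict-free colouring of the reduced graph, and then re-insert $x$. The extension step is where the real work — and the main obstacle — lies: the colour assigned to $x$ must simultaneously (a) differ from the colours of the at most two neighbours of $x$, for properness; (b) leave $x$ with a colour occurring exactly once among its neighbours, which forces the two neighbours of $x$ to have received distinct colours and may necessitate a local recolouring if the reduced instance did not already guarantee this; and (c) avoid re-creating a second copy of the colour that certifies a neighbour's conflict-free condition. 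The list size $d_G(x)+2=4$ is exactly the budget that makes (a)--(c) simultaneously solvable, so the bookkeeping is tight: for every vertex affected by the re-insertion one must track both the multiplicities of colours in its neighbourhood and which colour currently certifies it, and one must check that no component of the reduced graph is the forbidden $C_5$ (handling the small exceptional instances by hand). I expect the most delicate configurations to be those where a $2$-vertex, or a short chain of them, is attached to vertices of small degree, since there the slack vanishes and a plain vertex deletion may not suffice, so that a larger piece must be removed.

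With the reducible configurations in hand the proof closes by discharging. Assign to each vertex $v$ the charge $\mu(v)=7d_G(v)-18$; since $\mathrm{mad}(G)<\tfrac{18}{7}$ we have $\sum_{v\in V(G)}\mu(v)=14|E(G)|-18|V(G)|<0$. A $2$-vertex has charge $-4$, a $3$-vertex has charge $+3$, and a vertex of degree $d\ge 4$ has charge $7d-18$. Using the single discharging rule that every vertex of degree at least $3$ sends $2$ along each edge joining it to a $2$-neighbour, the reducible configurations force termination with no negative charge: a $2$-vertex has two neighbours of degree at least $3$ (its neighbours are not $2$-vertices and $\delta(G)\ge 2$), so it collects $2+2=4$ and ends at $0$; a $3$-vertex sends at most $2$ in total and ends at $\ge 1$; and a vertex of degree $d\ge 4$ sends at most $2d$ and ends at $7d-18-2d=5d-18\ge 2>0$. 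Hence $\sum_v\mu(v)\ge 0$, contradicting $\sum_v\mu(v)<0$, which completes the proof. (Should the reducibility analysis turn up further unavoidable configurations, they would be absorbed here by a correspondingly refined set of discharging rules.)

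Finally, the planar corollary is immediate: a planar graph with girth at least $9$ has $\mathrm{mad}(G)<\tfrac{18}{7}$ by the usual Euler-formula bound, and such a graph is not $C_5$, whose girth is $5$. Applying Theorem~\ref{thm:mad 18 over 7} to each component, and using that a graph is proper conflict-free $(\mathrm{degree}+k)$-choosable precisely when each of its components is, we conclude that every planar graph with girth at least $9$ is proper conflict-free $(\mathrm{degree}+2)$-choosable.
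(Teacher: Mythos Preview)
Your overall architecture (minimal counterexample, reducible configurations, discharging with initial charge $7d(v)-18$) matches the paper, but the two key reducible configurations you plan to establish are both too strong, and this breaks the discharging you propose. First, adjacent $2$-vertices are \emph{not} reducible with $(\mathrm{degree}+2)$ lists. Concretely, take a $2$-thread $uv$ with outer neighbours $a,b$, delete $\{u,v\}$, and suppose the inherited colouring has $\phi(a)=1$, $c_a=2$, $\phi(b)=3$, $c_b=2$ and $L(u)=L(v)=\{1,2,3,4\}$. Then after removing $\{\phi(a),c_a\}$ from $L(u)$ and $\{\phi(b),c_b\}$ from $L(v)$ you are left with $L^*(u)=\{3,4\}$, $L^*(v)=\{1,4\}$; the requirement $\phi(u)\neq\phi(b)=3$ forces $\phi(u)=4$, and then $\phi(v)$ must avoid $\phi(a)=1$ and $\phi(u)=4$, leaving nothing. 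So your single discharging rule cannot make the $2$-vertices nonnegative, because $2$-threads (indeed $3$-threads) can occur in a minimal counterexample. Second, a $3$-vertex \emph{can} have two $2$-neighbours: the paper only shows (Claim~3.7) that a $3$-vertex has at least one $3^+$-neighbour, and (Claim~3.10) that if it has two $2$-neighbours then each of those $2$-vertices has a $4^+$-vertex on its other side. With two $2$-neighbours your rule would drive a $3$-vertex to charge $3-4=-1$.

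What the paper actually does is substantially more intricate. It allows threads of length up to $3$ (forbidding only $4$-threads, Claim~3.5), and then proves a cascade of structural claims controlling how many $2$- and $3$-threads a $k$-vertex can support (Claims~3.8, 3.9, 3.11, 3.12), together with the endpoint constraints in Claims~3.7 and~3.10. The discharging rules are correspondingly refined: a $3$-vertex sends either $2$ or $\tfrac{3}{2}$ to a $2$-neighbour depending on whether it has one or two of them, and a $4^+$-vertex sends charge all the way along each incident maximal thread, with the amount depending on the thread's length and the degree of the far endpoint. Your hedge that ``a handful of finer statements'' might be needed does not cover this, because it is not that extra configurations must be added on top of yours; rather, your two headline configurations themselves are false, so the entire discharging scheme has to be rebuilt around longer threads.
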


If a graph $H$ has the average degree at most $\frac{10}{3}<4$, then it is easy to see that $H$ has a vertex of degree at most $3$.
Thus, the assumption of ${\rm mad}(G)<\frac{10}{3}$ implies that $G$ is $3$-degenerate.
Similarly, every graph $G$ with ${\rm mad}(G)<\frac{18}{7}$ is $2$-degenerate.
Thus, Theorems~\ref{thm:mad 10 over 3} and \ref{thm:mad 18 over 7} support Conjecture~\ref{conj:degenericity}.

Furthermore, the following is a direct corollary of our main results and Euler's formula.
	
\begin{corollary}\label{cor:girth}
    Let $G$ be a planar graph.
    \begin{itemize}
        \item If $G$ has girth at least $5$, then $G$ is proper conflict-free $({\rm degree}+3)$-choosable. 
        \item If $G$ has girth at least $9$, then $G$ is proper conflict-free $({\rm degree}+2)$-choosable. 
    \end{itemize}
\end{corollary}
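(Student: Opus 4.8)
The plan is to derive the two girth conditions from the maximum-average-degree bounds in Theorems~\ref{thm:mad 10 over 3} and \ref{thm:mad 18 over 7} via the standard Euler's-formula estimate. First I would record the following elementary fact: every planar graph $H$ with girth at least $g\ge 3$ satisfies $\frac{2|E(H)|}{|V(H)|}<\frac{2g}{g-2}$. If $H$ has no cycle, this is immediate, since then the left-hand side is less than $2$, which is at most $\frac{2g}{g-2}$. If $H$ has a cycle, we may assume $H$ is connected (the ratio $\frac{2|E|}{|V|}$ of a graph is a weighted mean of the ratios of its components), and then combine Euler's formula $|V(H)|-|E(H)|+|F(H)|=2$ with the face-counting inequality $g\,|F(H)|\le 2|E(H)|$, valid because in a plane embedding every face boundary walk contains a cycle and hence has length at least $g$, while each edge lies on at most two faces. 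This yields $|E(H)|\bigl(1-\tfrac2g\bigr)\le |V(H)|-2<|V(H)|$, which rearranges to the claimed strict bound.

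Next, since every subgraph of a planar graph of girth at least $g$ is again planar of girth at least $g$, applying the displayed inequality to every subgraph $H$ of $G$ and taking the maximum — attained, as $G$ is finite — gives ${\rm mad}(G)<\frac{2g}{g-2}$. For $g=5$ this reads ${\rm mad}(G)<\frac{10}{3}$, so Theorem~\ref{thm:mad 10 over 3} immediately yields that $G$ is proper conflict-free $({\rm degree}+3)$-choosable, which is the first bullet. For $g=9$ it reads ${\rm mad}(G)<\frac{18}{7}$.

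For the second bullet I would first reduce to the connected case. Proper conflict-free $({\rm degree}+k)$-choosability is preserved under disjoint unions: a $({\rm degree}+k)$-list assignment of $G$ restricts to such an assignment on each component, and witnessing proper conflict-free colorings of the components can be pasted together, since adjacencies and neighborhoods never cross between components. Each component of $G$ is connected, planar, of girth at least $9$, hence in particular not isomorphic to $C_5$ (which has girth $5$), and has maximum average degree less than $\frac{18}{7}$ by the previous paragraph; thus Theorem~\ref{thm:mad 18 over 7} applies to every component, and combining the colorings shows $G$ is proper conflict-free $({\rm degree}+2)$-choosable.

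There is no substantive obstacle here: the whole argument is the routine girth-to-${\rm mad}$ translation together with a one-line reduction to connected graphs. The only points that warrant a word of care are the degenerate case in which a subgraph of $G$ is a forest (covered by the crude bound $2\le\frac{2g}{g-2}$ for $g\ge 3$, which holds for both $g=5$ and $g=9$) and the passage from the connected hypothesis of Theorem~\ref{thm:mad 18 over 7} to an arbitrary planar graph of girth at least $9$.
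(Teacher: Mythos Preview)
Your argument is correct and is precisely the standard Euler's-formula reduction the paper has in mind when it calls this ``a direct corollary of our main results and Euler's formula.'' Your handling of the acyclic case and the reduction to connected components for Theorem~\ref{thm:mad 18 over 7} are the only details that need any care, and you have treated both appropriately.
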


The latter statement improves the girth condition of a recent result in Wang and Zhang~\cite{WZ2025}, where they showed that every planar graph with girth at least 12 is proper conflict-free $({\rm degree}+2)$-choosable.
	  
We conclude this section by introducing the notation. For a subgraph $H$ of a graph $G$ and a vertex $v \in V(G)$ (not necessarily in $H$), we denote $N_H(v)$ the neighbors of $v$ in $V(H)$. For a set $S \subseteq V(G)$, we denote  $N_H(S)$ all the vertices in $V(H)$ which have a neighbor in $S$. 
For a (partial) coloring $\phi$ of $G$ and a subset $R \subseteq V(G)$, let $\phi(R)$ be the set comprised of all colors used on $R$ of $\phi$ (not every vertex in $R$ needs to be colored in $\phi$). For a (partial) proper conflict-free coloring $\phi$ of $G$, let $\mathcal{U}_{\phi}(v,G)$ be the set of colors that appear exactly once at the neighborhood of $v$ in $G$.
For a vertex $u \in G$, we say $u$ is a \emph{$k$-vertex} (\emph{$k^+$-vertex}, \emph{$k^-$-vertex}, respectively) in $G$ if $d_{G}(u)=k$ ($d_{G}(u) \geq k$, $d_{G}(u) \leq k$, respectively).
A $k$-vertex ($k^+$-vertex, $k^-$-vertex, respectively) that is adjacent to a vertex $u$ is called a \emph{$k$-neighbor} (\emph{$k^+$-neighbor}, \emph{$k^-$-neighbor}, respectively) of $u$.

\section{Proof of Theorem \ref{thm:mad 10 over 3}}\label{sec:mad 10 over 3}

Suppose that the statement is false, and let $G$ be a counterexample such that $|V(G)|+|E(G)|$ is as small as possible. Assume that $L$ is a $({\rm degree}+3)$-list assignment of $G$ such that $G$ is not proper conflict-free $L$-colorable. 

\subsection{Forbidden configurations}

We show some claims on forbidden configurations of $G$.
It is easy to see that a 1-vertex with at least three available colors is reducible, and hence we may assume that $\delta(G)\geq 2$.

\begin{claim}\label{claim-triangle}
    A $2$-vertex is not contained in a triangle.
\end{claim}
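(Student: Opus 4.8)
The plan is to argue by minimality. Suppose $v$ is a $2$-vertex lying in a triangle $vxy$, so $x,y$ are adjacent and each is a neighbor of $v$. Delete $v$ from $G$ to obtain $G' = G - v$; then $|V(G')| + |E(G')| < |V(G)| + |E(G)|$, and since ${\rm mad}$ is monotone under taking subgraphs, ${\rm mad}(G') < \frac{10}{3}$, so $G'$ is not a counterexample. Restrict the list assignment $L$ to $V(G')$ and note that for every $w \in V(G')$ we have $d_{G'}(w) \le d_G(w)$, so $L$ is still a $({\rm degree}+3)$-list assignment on $G'$; actually the only vertices whose degree drops are $x$ and $y$, each losing exactly $1$. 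Hence there is a proper conflict-free $L$-coloring $\phi'$ of $G'$. The task is to extend $\phi'$ to $v$ by choosing $\phi(v) \in L(v)$, keeping the coloring proper and restoring/maintaining the conflict-free condition at $v$, at $x$, and at $y$.

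Next I would count the constraints on the color of $v$. Properness forbids $\phi'(x)$ and $\phi'(y)$, so at most $2$ colors of $L(v)$ are lost there. Since $|L(v)| \ge d_G(v) + 3 = 5$, at least $3$ candidate colors remain. The conflict-free condition at $v$ itself is automatic: $v$ has exactly two neighbors $x,y$ which get distinct colors, so both colors in $v$'s neighborhood appear exactly once. The real work is the conflict-free condition at $x$ and at $y$, since adding $v$ as a new neighbor can destroy the "appears exactly once" color that $\phi'$ provided. For $x$: in $\phi'$ some color $c_x$ appears exactly once in $N_{G'}(x)$; coloring $v$ with $\phi(v) = c_x$ would ruin this unless we can find a different uniquely-appearing color. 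The key observation is that among the (at least) $3$ remaining candidates for $\phi(v)$, coloring $v$ with a color already present in $N_{G'}(x)$ is harmless to $x$ provided $x$ still retains some uniquely-occurring color — and there is always such a color $c_x$, and the only way coloring $v$ interferes is by duplicating $c_x$; so we simply need to avoid $\phi(v) = c_x$ and $\phi(v) = c_y$ (for the analogous color $c_y$ at $y$), while also observing that if $\phi(v)$ equals a color that appears zero times in $N_{G'}(x)$ it becomes a new unique color at $x$, which is fine, and if it appears $\ge 2$ times it changes nothing relevant. So the colors forbidden for $v$ are: $\phi'(x)$, $\phi'(y)$, and the "fragile" unique colors $c_x, c_y$ — at most $4$ colors, one short of our budget of $5$ unless there are coincidences. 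To close the gap, I would exploit the triangle edge $xy$: note $\phi'(x) \ne \phi'(y)$, and if $c_x = \phi'(y)$ then $c_x$ is the color of $y$, which is adjacent to $x$ but $y$ may or may not be a neighbor counted in $N_{G'}(x)$ — in fact $y \in N_{G'}(x)$, so $c_x$ cannot equal $\phi'(y)$ would need care; more robustly, choose $c_x$ to be a uniquely-appearing color at $x$ different from $\phi'(y)$ whenever possible, and similarly $c_y \ne \phi'(x)$, reducing the forbidden set to at most $\{\phi'(x),\phi'(y),c_x,c_y\}$ with $c_x \ne \phi'(y)$ and $c_y \ne \phi'(x)$, still potentially $4$ distinct — so the sharper move is to re-examine whether $x$ (or $y$) might fail to have a unique color at all once we are careful, or to recolor $x$ or $y$ if the extension to $v$ is blocked.

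The main obstacle, and where I expect to spend the real effort, is the case where the naive count gives only a deficit of one: both $x$ and $y$ have a unique "fragile" color that, together with $\phi'(x)$ and $\phi'(y)$, exhausts all $5$ colors in $L(v)$. In that situation I would back up one step and recolor $x$ (or $y$) as well: since $x \in V(G')$ has $d_{G'}(x) = d_G(x) - 1$, it had at least $d_G(x) + 3$ colors available while only $d_{G'}(x) + 2 = d_G(x) + 1$ of them could be blocked by properness in $\phi'$, leaving slack; more to the point, one can treat $\{v, x\}$ together — uncolor $x$ in $\phi'$, then color $v$ freely (only $\phi'(y)$ and one fragile color at $y$ forbidden, so $\ge 3$ choices), and finally color $x$, which now must avoid the colors on its $d_G(x)$ neighbors in $G$ (that is $d_G(x)$ forbidden colors from properness) and must be chosen to keep $x$ conflict-free and to keep each neighbor $z$ of $x$ conflict-free; since $|L(x)| \ge d_G(x) + 3$, there are $\ge 3$ candidates after properness, and a short argument handles the conflict-free conditions at $x$ and at the (at most one or two) neighbors of $x$ whose unique color is fragile. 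Carrying out this two-vertex uncoloring-and-recoloring argument cleanly, and verifying the conflict-free condition is restored simultaneously at $v$, $x$, $y$ and all affected second-neighbors, is the crux; the rest is bookkeeping.
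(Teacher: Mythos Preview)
Your approach is exactly the paper's: delete the $2$-vertex, take a proper conflict-free $L$-coloring $\phi'$ of $G'$, and extend to $v$ by avoiding $\phi'(x)$, $\phi'(y)$, $c_x$, $c_y$. You correctly count that this forbids at most $4$ colors from a list of size at least $5$, and you correctly note that the conflict-free condition at $v$ is automatic since $\phi'(x)\neq\phi'(y)$.

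The problem is that you then manufacture a nonexistent obstacle. You write that the ``main obstacle'' is the case where $\phi'(x),\phi'(y),c_x,c_y$ ``exhaust all $5$ colors in $L(v)$'' --- but four colors cannot exhaust a set of five. With $|L(v)|\ge 5$ and at most $4$ forbidden colors, there is always at least one legal color for $v$, and the proof ends right there; this is precisely what the paper does in two lines. All of your subsequent discussion about recoloring $x$ or $y$, uncoloring and treating $\{v,x\}$ together, and worrying about second-neighbors is unnecessary. (One small point you glossed over but the paper makes explicit: since $\delta(G)\ge 2$, neither $x$ nor $y$ is isolated in $G'$, so the unique colors $c_x,c_y$ actually exist.)
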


\begin{proof}
    Suppose that $G$ has a $2$-vertex $u$ that is contained in a triangle $uv_1v_2$.
    Let $G'=G-u$.
    By the minimality of $G$, $G'$ admits a proper conflict-free $L$-coloring $\phi$.
    As $\delta(G)\geq 2$, neither of $\{v_1,v_2\}$ is an isolated vertex of $G'$, so we choose $c_{v_i}\in \mathcal{U}_{\phi}(v_i,G')$ for each $i\in \{1,2\}$.
    We extend $\phi$ to $u$ by choosing $\phi(u)\in L(u)\setminus \{\phi(v_1),\phi(v_2),c_{v_1},c_{v_2}\}$.
    Since $\phi(v_1)\neq \phi(v_2)$, we have that $\phi(v_1)\in \mathcal{U}_{\phi}(u,G)$, and hence $\phi$ is a proper conflict-free $L$-coloring of $G$, a contradiction.
\end{proof}

\begin{claim}\label{claim-4cycle}
    If $u$ and $v$ are both $2$-vertices, then $N_G(u)\neq N_G(v)$.
\end{claim}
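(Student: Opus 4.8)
The plan is to argue by contradiction against the minimality of $G$, deleting just \emph{one} of the two twin $2$-vertices. Suppose $u$ and $v$ are $2$-vertices with $N_G(u)=N_G(v)=\{v_1,v_2\}$; note that simplicity of $G$ forces $v_1\neq v_2$, $u\neq v$, and (since $v\notin\{v_1,v_2\}$, as $v$ is not adjacent to itself) $u\not\sim v$. Set $G'=G-u$. Then $G'$ is a proper subgraph of $G$, so ${\rm mad}(G')\le{\rm mad}(G)<\tfrac{10}{3}$, and for every $w\in V(G')$ we have $|L(w)|\ge d_G(w)+3\ge d_{G'}(w)+3$; hence by the minimality of $G$ there is a proper conflict-free $L$-coloring $\phi$ of $G'$.

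The key observation is that the vertex $v$, which has not been touched, is still a $2$-vertex of $G'$ with $N_{G'}(v)=\{v_1,v_2\}$, so the conflict-free condition at $v$ in $\phi$ already guarantees that some color appears exactly once among $\phi(v_1),\phi(v_2)$; that is, $\phi(v_1)\neq\phi(v_2)$. Moreover $d_{G'}(v_i)=d_G(v_i)-1\ge 1$ for $i\in\{1,2\}$, so $v_1$ and $v_2$ are non-isolated in $G'$, and we may choose $c_{v_i}\in\mathcal{U}_\phi(v_i,G')$.

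It then remains to extend $\phi$ to $u$: choose $\phi(u)\in L(u)\setminus\{\phi(v_1),\phi(v_2),c_{v_1},c_{v_2}\}$, which is possible since $|L(u)|\ge d_G(u)+3=5>4$. This keeps $\phi$ proper (the only new edges are $uv_1$ and $uv_2$), and it is conflict-free: at $u$ because $\phi(v_1)\neq\phi(v_2)$; at $v_i$ because $\phi(u)\neq c_{v_i}$ keeps $c_{v_i}$ appearing exactly once in $N_G(v_i)=N_{G'}(v_i)\cup\{u\}$; and at $v$ and at every other vertex $w$ because their neighborhoods and colors are unchanged. This contradicts the choice of $G$.

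The proof has essentially no hard step; the only thing to get right is the decision to remove a single $2$-vertex rather than both. If one deleted $u$ and $v$ simultaneously, one would be left to handle the awkward possibility that $\phi(v_1)=\phi(v_2)$ (which can occur when $v_1\not\sim v_2$), forcing a recoloring of $v_1$ or $v_2$ together with a check that this does not spoil the conflict-free condition at its other neighbors; keeping $v$ in the graph makes that case disappear for free, since $v$ itself then certifies $\phi(v_1)\neq\phi(v_2)$.
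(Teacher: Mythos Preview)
Your proof is correct and follows essentially the same approach as the paper's: delete one of the twin $2$-vertices, apply minimality to color $G'=G-u$, observe that the surviving twin $v$ forces $\phi(v_1)\neq\phi(v_2)$, and then extend to $u$ by avoiding four colors. Your write-up is in fact more explicit than the paper's, which simply defers to the argument of the previous claim for the extension step.
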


\begin{proof}
    Suppose that $G$ has two $2$-vertices $u$ and $v$ such that $N_G(u)=N_G(v)=\{w_1,w_2\}$.
    Let $G'=G-u$.
    By the minimality of $G$, $G'$ admits a proper conflict-free $L$-coloring $\phi$.
    Using a similar argument as in the proof of Claim~\ref{claim-triangle}, we can extend $\phi$ to a proper $L$-coloring of $G$ so that $\mathcal{U}_{\phi}(w,G)\neq \emptyset$ for any $w\in V(G)\setminus \{u\}$.
    Furthermore, since $\mathcal{U}_{\phi}(v,G)\neq \emptyset$, we know that $\phi(w_1)\neq \phi(w_2)$.
    Thus, $\mathcal{U}_{\phi}(u,G)\neq \emptyset$ and $\phi$ is a proper conflict-free $L$-coloring of $G$, a contradiction.
\end{proof}

\begin{claim}\label{claim-33}
    A $3^-$-vertex is not adjacent to another $3^-$-vertex.  
\end{claim}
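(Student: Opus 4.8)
The plan is to argue by contradiction using the minimality of $G$, exactly as in the previous two claims. Suppose $u$ and $v$ are adjacent $3^-$-vertices; form $G' = G - uv$ (deleting the edge, not a vertex, so that degrees of other vertices are unchanged and the list assignment $L$ remains a $(\mathrm{degree}+3)$-list assignment only after we account for the drop in $d(u)$ and $d(v)$). Actually a cleaner route is to delete one of them, say $G' = G - u$; then $v$ loses one from its degree, but $L(v)$ still has at least $d_{G'}(v) + 3$ colors, so $L$ restricted to $G'$ is still a valid $(\mathrm{degree}+3)$-list assignment, and by minimality $G'$ has a proper conflict-free $L$-coloring $\phi$. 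The task is to extend $\phi$ to $u$.

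First I would record what must be avoided when coloring $u$: the colors on $N_G(u) \setminus \{u\}$ (at most two of them, since $u$ is a $3^-$-vertex), and colors that would destroy the conflict-free condition at the neighbors of $u$. Since $u$ has at most $3$ neighbors and one of them is $v$, there are at most two neighbors besides $v$; but we must also be careful that giving $u$ its color does not kill the unique-color witness at $v$ and at the other neighbors of $u$. As in Claim~\ref{claim-triangle}, for each neighbor $w$ of $u$ in $G'$ we pick $c_w \in \mathcal{U}_\phi(w, G')$, and we must choose $\phi(u) \notin \{c_w : w \in N_{G'}(u)\}$ together with $\phi(u) \notin \phi(N_{G'}(u))$, except that we are allowed to set $\phi(u) = c_w$ provided $c_w$ still appears exactly once at $w$ in $G$, which happens iff no other neighbor of $w$ already carries $c_w$ — but by the choice $c_w \in \mathcal{U}_\phi(w,G')$ that is automatic, so actually we only need to avoid creating a second occurrence, i.e. we need $\phi(u) \ne c_w$ only when $c_w$ already appears once among $N_{G'}(w)$, which it does; so $\phi(u)$ must avoid each $c_w$. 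Counting: at most $2$ forbidden colors from $\phi(N_{G'}(u))$ and at most $3$ from the $c_w$'s, while $|L(u)| \ge d_G(u) + 3$. If $d_G(u) \le 2$ this is at most $5$ forbidden colors versus $|L(u)| \ge 5$, which is not quite enough, so the argument needs one more observation: we also get to choose $c_w$ for $w = v$, and since $v$ itself is a $3^-$-vertex we can instead pick $c_v$ to equal $\phi(u)$ when convenient, turning $v$'s witness into $\phi(u)$.

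The real content, then, is the same coincidence-exploitation used in Claims~\ref{claim-triangle} and \ref{claim-4cycle}: I would split on $d_G(u)$. If $d_G(u) = 2$, say $N_G(u) = \{v, x\}$, then after deleting $u$ the vertex $v$ has at most one other neighbor, so $|L(v)| \ge d_{G'}(v) + 3 \ge 4$ but more to the point $v$ needs a unique-color witness in $G$ and we will supply $\phi(u)$ as that witness; we only need $\phi(u) \ne \phi(v)$, $\phi(u) \ne \phi(x)$, and $\phi(u) \ne c_x$, three constraints against $|L(u)| \ge 5$, and then $\phi(u)$ is automatically in $\mathcal{U}_\phi(v,G)$ as long as no other neighbor of $v$ carries $\phi(u)$ — which we arrange by also requiring $\phi(u)$ to avoid $\phi(N_{G'}(v))$, at most two more colors, still fine. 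When $d_G(u) = 3$, $|L(u)| \ge 6$ and we have at most $2 + 3 = 5$ constraints from $\phi(N_{G'}(u))$ and the $c_w$'s, and again $v$'s witness can be taken to be $\phi(u)$, so the count closes. I expect the main obstacle to be bookkeeping: making sure that when we declare $\phi(u)$ to be the conflict-free witness at $v$ (and possibly at another $3^-$-neighbor), we have simultaneously forbidden every color that already appears in those neighborhoods, so that $\phi(u)$ genuinely appears exactly once there; this forces a careful case analysis on how many of $u$'s neighbors are themselves low-degree and how their neighborhoods overlap, but in every case the list size $d_G(u)+3$ beats the total number of forbidden colors.
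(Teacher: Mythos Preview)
There is a genuine gap: your plan never secures the conflict-free condition \emph{at $u$ itself}. After deleting only $u$ and taking a proper conflict-free $L$-coloring $\phi$ of $G'=G-u$, the colors on $N_G(u)$ are already fixed. If $d_G(u)=2$ with $N_G(u)=\{v,x\}$, Claim~\ref{claim-triangle} only tells you $vx\notin E(G)$, so nothing prevents $\phi(v)=\phi(x)$; in that event $\mathcal{U}_\phi(u,G)=\emptyset$ no matter what color you give $u$. The same obstruction arises when $d_G(u)=3$ and all three neighbors happen to receive the same color. Your entire discussion is about manufacturing a witness for $v$ (by letting $\phi(u)$ serve as $v$'s unique color), but the symmetric problem of finding a witness for $u$ is never addressed and cannot be solved by choosing $\phi(u)$ alone. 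A secondary issue: in the $d_G(u)=2$ case your count of forbidden colors is $5$ against $|L(u)|\ge 5$, which is not ``still fine''.

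The paper avoids this by deleting \emph{both} $u$ and $v$. With $\phi(v)$ now under your control, you can force $\phi(v)\neq\phi(u_1)$ for some other neighbor $u_1$ of $u$, guaranteeing $\mathcal{U}_\phi(u,G)\neq\emptyset$ since $d_G(u)\le 3$; symmetrically, choosing $\phi(u)\neq\phi(v_1)$ handles $v$. The paper sets $L^*(u)=L(u)\setminus(\phi(N_{G'}(u))\cup\{c_w:w\in N_{G'}(u)\})$ and likewise for $v$, obtaining $|L^*(u)|,|L^*(v)|\ge 2$, and then does a short two-case analysis (depending on whether $|L^*(u)|=2$, which forces $\phi(u_1)\neq\phi(u_2)$ and hence already gives a witness at $u$) to pick $\phi(u)$ and $\phi(v)$ in the right order. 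Deleting the pair is what buys the freedom you need; deleting only one vertex cannot work without also recoloring the other, which amounts to the same thing.
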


\begin{proof}  
    Suppose to the contrary that $G$ has two adjacent $3^-$-vertices $u$ and $v$.
    Let $d_G(u)=k+1$ and $d_G(v)=\ell+1$ for some $k,\ell \in \{1, 2\}$. Let $u_1, \ldots, u_k$ be the neighbors of $u$ distinct from $v$, and let $v_1, \ldots, v_{\ell}$ be the neighbors of $v$ distinct from $u$. Let $G' = G - \{u,v\}$. By the minimality of $G$, $G'$ admits a proper conflict-free $L$-coloring $\phi$. 
    By Claim~\ref{claim-triangle}, $u$ and $v$ does not have a common $2$-neighbor, and hence $G'$ does not have an isolated vertex.
    For each $w \in N_{G'}(u) \cup N_{G'}(v)$, we choose a color $c_w \in \mathcal{U}_\phi(w, G')$. Let
    \begin{align*}
        L^*(u) &= L(u) \setminus (\phi(N_{G'}(u)) \cup \{c_w \mid w \in N_{G'}(u)\}) \text{ and}\\
        L^*(v) &= L(v) \setminus (\phi(N_{G'}(v)) \cup \{c_w \mid w \in N_{G'}(v)\}).
    \end{align*}
	Obviously, $|L^*(u)|\geq k+4-2k \geq 2$ and $|L^*(v)|\geq \ell+4-2\ell \geq 2$. We will extend $\phi$ to a proper conflict-free $L$-coloring of $G$.
	
	If $|L^*(u)| = 2$, then it must be that $d_G(u) = 3$ and $\phi(u_1) \neq \phi(u_2)$. Therefore, we can choose $\phi(u) \in L^*(u) \setminus \{\phi(v_1)\}$ and then choose $\phi(v) \in L^*(v) \setminus \{\phi(u)\}$. It is clear that $\phi$ is proper. Since $d_{G}(u) \leq 3$ and $d_{G}(v) \leq 3$,  $\phi(u) \neq \phi(v_1)$ ensures that $\mathcal{U}_{\phi}(v, G) \neq \emptyset$, and $\phi(u_1) \neq \phi(u_2)$ ensures that $\mathcal{U}_{\phi}(u, G) \neq \emptyset$.  Hence, $\phi$ is a proper conflict-free $L$-coloring of $G$, a contradiction.
	
	Thus, we may assume that $|L^*(u)| \geq 3$. Then we choose $\phi(v) \in L^*(v) \setminus \{\phi(u_1)\}$, and subsequently choose $\phi(u) \in L^*(u) \setminus \{\phi(v), \phi(v_1)\}$. By similar arguments as in the last paragraph, $\phi$ is a proper conflict-free $L$-coloring of $G$.
\end{proof}

\begin{claim}\label{claim: number of 2-neighbors}
	For any integer $k\geq 4$, every $k$-vertex of $G$ has at most $k-3$ neighbors of degree $2$. 
\end{claim}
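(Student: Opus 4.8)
The plan is to argue by contradiction using the minimality of $G$, following the same reduction template as in the previous claims. Suppose $G$ has a $k$-vertex $u$ with $k\geq 4$ that has at least $k-2$ neighbors of degree $2$; call these $2$-neighbors $x_1,\dots,x_{k-2}$ (and let $y_1,\dots,y_{k-2}$ be the other endpoints, where $y_i$ is the second neighbor of $x_i$, noting $y_i\neq u$ by $\delta(G)\geq 2$ and that the $x_i$ are pairwise distinct). Let $w_1,\dots,w_{j}$ be the remaining neighbors of $u$, where $j=k-(k-2)=2$ or possibly fewer if we took exactly $k-2$ two-neighbors; in general $j\le 2$. The idea is to delete $u$ together with all its $2$-neighbors, take a proper conflict-free $L$-coloring $\phi$ of the smaller graph $G'=G-\{u,x_1,\dots,x_{k-2}\}$ by minimality, and then extend.

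First I would check $G'$ has no isolated vertex created by the deletion: by Claim~\ref{claim-triangle} a $2$-vertex is not in a triangle, so $x_i$ is not adjacent to $u$'s other neighbors through a triangle, and by Claim~\ref{claim-4cycle} no two of the $x_i$ share both neighbors, so the $y_i$ need not be distinct from each other but each still has degree $\geq 1$ in $G'$ unless $d_G(y_i)\le 2$ — and here Claim~\ref{claim-33} rules out $y_i$ being a $3^-$-vertex adjacent to the $2$-vertex $x_i$, so in fact $d_G(y_i)\geq 4$, hence $y_i$ keeps a neighbor in $G'$. (A small separate check handles the $w_t$'s.) Then for each vertex $z$ in $N_{G'}(u)\cup\bigcup_i N_{G'}(x_i)=\{w_1,\dots,w_j\}\cup\{y_1,\dots,y_{k-2}\}$ pick $c_z\in\mathcal U_\phi(z,G')$, which is nonempty. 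The uncolored vertices are $u$ and the $x_i$'s, and I will color $u$ first, then each $x_i$.

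For $u$: the list $L(u)$ has size $d_G(u)+3=k+3$; the constraints from its colored neighbors $w_t$ are at most $j\le 2$ color-values plus $j\le 2$ "conflict" colors $c_{w_t}$, so after removing at most $2j\le 4$ colors we still have $\geq k-1\geq 3$ choices; we also want to avoid, for each $i$, forcing a bad situation at $y_i$, but since $y_i$ has another colored neighbor in $G'$ with a private color, the only real requirement is $\phi(u)\neq c_{y_i}$ for those $y_i$ having exactly one colored neighbor besides $x_i$ in $G'$ — this is again at most $k-2$ forbidden colors, so I should instead be more careful and color the $x_i$'s cleverly rather than dump everything onto $u$. The cleaner route: choose $\phi(u)$ avoiding only $\phi(w_t)$ and $c_{w_t}$ (at most $4$ colors), leaving $\geq k-1$ options; then for each $x_i$ choose $\phi(x_i)\in L(x_i)\setminus\{\phi(u),\phi(y_i),c_{y_i}\}$, and since $|L(x_i)|\geq 5$ this is possible with room to spare. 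Finally verify conflict-freeness at every vertex: at $u$, the neighbors are $x_1,\dots,x_{k-2},w_1,\dots,w_j$; I need some color appearing exactly once among them — this is where the extra slack in coloring the $x_i$ (we had $\geq 5-3=2$ free choices each) lets me force, say, $\phi(x_1)$ to be a color distinct from all other $\phi(x_i)$ and all $\phi(w_t)$; at each $y_i$, $\phi(x_i)\neq c_{y_i}$ preserves the witness color $c_{y_i}$; at each $w_t$, similarly $c_{w_t}$ is preserved since only $u$ was added as a new neighbor and $\phi(u)\neq c_{w_t}$; at all other vertices nothing changed. This yields a proper conflict-free $L$-coloring of $G$, a contradiction.

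The main obstacle is the bookkeeping at $u$ itself: ensuring $\mathcal U_\phi(u,G)\neq\emptyset$ when almost all of $u$'s neighbors are $2$-vertices whose colors we are choosing, while simultaneously each of those choices must respect the $y_i$-constraints. The key quantitative point making it work is that each $|L(x_i)|\geq d_G(x_i)+3=5$, so even after forbidding $\phi(u)$, $\phi(y_i)$, $c_{y_i}$, and all previously chosen $\phi(x_{i'})$ with $i'<i$ plus all $\phi(w_t)$ — that is at most $3+(k-3)+2 = k+2 < 5$ only fails when $k$ is large, so one must order the argument so that the "make $\phi(x_1)$ unique at $u$" demand costs just a constant, not $k$; concretely, color $x_2,\dots,x_{k-2}$ first using only the $3$ local forbidden colors each, then color $x_1$ avoiding its $3$ local colors plus the (at most $k-3+j\le k-1$, but we only need to dodge the specific values actually taken, and $|L(x_1)|=5$ may be too small) — so the honest fix is to only require $\phi(x_1)\notin\{\phi(w_1),\dots,\phi(w_j)\}\cup\{\phi(u),\phi(y_1),c_{y_1}\}$, at most $j+3\le 5$ colors, and if that exhausts $L(x_1)$ we instead observe $\phi(w_1)\neq\phi(w_2)$ (or $j\le1$) forces a witness at $u$ directly without needing $x_1$ unique. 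Handling that case split is the one genuinely fiddly part; everything else is the routine deletion–extension bound.
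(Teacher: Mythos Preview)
Your overall deletion--extension template matches the paper's, but two genuine gaps remain.

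First, you never secure the conflict-free condition at the $2$-vertices $x_i$ themselves. Each $x_i$ has $N_G(x_i)=\{u,y_i\}$, so $\mathcal U_\phi(x_i,G)\neq\emptyset$ is precisely the requirement $\phi(u)\neq\phi(y_i)$. In your ``cleaner route'' you choose $\phi(u)$ avoiding only $\phi(w_t)$ and $c_{w_t}$; nothing prevents $\phi(u)=\phi(y_i)$ for some $i$, and your verification paragraph checks $u$, the $y_i$, the $w_t$, and ``all other vertices'' but simply omits the $x_i$. The paper fixes this by also stripping $\phi(y_1),\dots,\phi(y_{k-2})$ from the list for $u$: together with $\phi(w_1),\phi(w_2),c_{w_1},c_{w_2}$ that is at most $k+2$ forbidden colors, leaving $|L(u)|-(k+2)\geq 1$, which suffices since $\phi(u)$ itself never needs to serve as a witness.

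Second, your handling of the witness at $u$ does not close. Forbidding $\phi(x_1)\in\{\phi(w_1),\phi(w_2)\}$ does not make $\phi(x_1)$ unique in $N_G(u)$, because $x_2,\dots,x_{k-2}$ were colored earlier with no reference to $\phi(x_1)$; and observing $\phi(w_1)\neq\phi(w_2)$ does not yield a witness either, since the $x_i$ may duplicate either of those colors. The paper resolves this with a clean two-case rule. If $\phi(w_1)\neq\phi(w_2)$, forbid \emph{every} $x_i$ from using $\phi(w_2)$ (each $x_i$ then avoids only $\{\phi(u),\phi(y_i),c_{y_i},\phi(w_2)\}$, four colors out of at least five), so $\phi(w_2)$ is the unique color at $u$. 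If $\phi(w_1)=\phi(w_2)$, first choose $\phi(x_1)\notin\{\phi(u),\phi(y_1),c_{y_1},\phi(w_1)\}$ and then forbid all other $x_i$ from using $\phi(x_1)$, making $\phi(x_1)$ unique at $u$. In both cases only one extra forbidden color is imposed on each $x_i$, which the slack $|L(x_i)|\geq 5$ comfortably absorbs; this is exactly the bookkeeping your final paragraph flags as ``fiddly'' but does not actually carry out.
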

\begin{proof}
	Suppose to the contrary that $u \in V(G)$ with $N_{G}(u)=\{u_1, u_2, \ldots, u_k\}$, where $u_1, u_2, \ldots, u_{k-2}$ have degree $2$. 
    For each $i \in \{1, 2, \ldots, k-2\}$, let $v_i$ be the neighbor of $u_i$ other than $u$. Let $G' = G - \{u, u_1, u_2, \ldots, u_{k-2}\}$. By the minimality of $G$, $G'$ admits a proper conflict-free $L$-coloring $\phi$.
    By Claims~\ref{claim-triangle} and \ref{claim-4cycle}, $u_{k-1}, u_k, v_1, v_2, \ldots v_{k-2}$ are pairwise distinct vertices of $G$, and hence none of them is an isolated vertex of $G'$.
	For each $w \in \{u_{k-1}, u_k, v_1, v_2, \ldots, v_{k-2}\}$, choose a color $c_w \in \mathcal{U}_\phi(w, G')$. We define a list assignment $L^*$ by 
    \begin{itemize}
        \item $L^*(u) = L(u) \setminus \left(\phi(\{u_{k-1}, u_k, v_1, v_2, \ldots, v_{k-2}\}) \cup \{c_{u_{k-1}}, c_{u_k}\}\right)$ and
        \item for each $i\in \{1,\dots , k-2\}$, let $L^*(u_i) = L(u_i) \setminus \{\phi(v_i), c_{v_i}\}$.
    \end{itemize}
	Clearly, we have
	\[
	|L^*(u)| \geq k + 3 - k - 2 = 1,
	\quad\text{and}\quad
	|L^*(u_i)| \geq 5 - 2 = 3.
	\]
	
	Now, we extend $\phi$ to a proper conflict-free $L$-coloring of $G$.
	We first choose $\phi(u) \in L^*(u)$ arbitrarily.  
	If $\phi(u_{k-1}) = \phi(u_k)$, then we choose $\phi(u_1) \in L^*(u_1) \setminus \{\phi(u), \phi(u_k)\}$, and for each $i \in \{2, 3, \ldots, k-2\}$, let
	$\phi(u_i) \in L^*(u_i) \setminus \{\phi(u), \phi(u_1)\}$.
	Otherwise,
    %$\phi(u_{k-1}) \neq \phi(u_k)$, then
    for each $i \in \{1, 2, \ldots, k-2\}$, choose $\phi(u_i) \in L^*(u_i) \setminus \{\phi(u), \phi(u_k)\}$. 
    In either case, it is easy to see that $\phi$ is a proper $L$-coloring such that every vertex $w\in V(G)\setminus \{u\}$ satisfies $\mathcal{U}_{\phi}(w,G)\neq \emptyset$.
    By the choice of $\phi(u_1),\phi(u_2),\dots ,\phi(u_{k-2})$, we have $\phi(u_1)\in \mathcal{U}_{\phi}(u,G)$ when $\phi(u_{k-1})=\phi(u_k)$ and $\phi(u_k)\in \mathcal{U}_{\phi}(u,G)$ when $\phi(u_{k-1})\neq \phi(u_k)$, and hence $\phi$ is a proper conflict-free $L$-coloring of $G$, a contradiction.
\end{proof}

\begin{claim}\label{claim-k-3 small neighbors}
	For any integer $k\geq 4$, if a $k$-vertex $u$ has $k-3$ neighbors of degree $2$, then $u$ has no $3$-neighbors.
\end{claim}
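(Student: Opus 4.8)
The plan is to run the minimal-counterexample reduction as in the previous claims, but with an asymmetric choice of the vertices to delete. Suppose for contradiction that some $k$-vertex $u$ with $k\ge 4$ has $k-3$ neighbours of degree $2$ --- hence exactly $k-3$ such neighbours, by Claim~\ref{claim: number of 2-neighbors} --- and also has a $3$-neighbour. Label the $2$-neighbours of $u$ by $u_1,\dots,u_{k-3}$ and the three remaining neighbours of $u$ by $u_{k-2},u_{k-1},u_k$, chosen so that $d_G(u_{k-2})=3$; note $d_G(u_{k-1}),d_G(u_k)\ge 3$ as well. For $i\le k-3$ let $v_i$ be the second neighbour of $u_i$, and let $w_1,w_2$ be the two neighbours of $u_{k-2}$ other than $u$.

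The key decision is to delete $u$ and its $2$-neighbours but \emph{not} $u_{k-2}$: set $G'=G-\{u,u_1,\dots,u_{k-3}\}$ and, by minimality, let $\phi$ be a proper conflict-free $L$-colouring of $G'$. Using Claims~\ref{claim-triangle} and~\ref{claim-4cycle} one checks that $v_1,\dots,v_{k-3}$ are pairwise distinct and disjoint from $\{u_{k-2},u_{k-1},u_k\}$, that the only deleted neighbour of each of $u_{k-1},u_k$ is $u$ and of each $v_i$ is $u_i$, and that $u_{k-2}$ is adjacent to no $u_i$; in particular $u_{k-1},u_k$ have degree $\ge 2$ in $G'$, each $v_i$ has degree $\ge 3$ in $G'$, and $u_{k-2}$ has degree exactly $2$ in $G'$ with $N_{G'}(u_{k-2})=\{w_1,w_2\}$. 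Since $\phi$ is conflict-free on $G'$ and $u_{k-2}$ is a non-isolated $2$-vertex there, we automatically have $\phi(w_1)\ne\phi(w_2)$; hence, whatever colour $u$ eventually receives, the multiset $\{\phi(u),\phi(w_1),\phi(w_2)\}$ contains a colour of multiplicity one, so $u_{k-2}$ is conflict-free in $G$ as soon as $\phi(u)\ne\phi(u_{k-2})$ --- we get its conflict-freeness for free.

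For each $w\in\{u_{k-1},u_k,v_1,\dots,v_{k-3}\}$ pick $c_w\in\mathcal{U}_{\phi}(w,G')$, and set
\[
L^*(u)=L(u)\setminus\bigl(\phi(\{u_{k-2},u_{k-1},u_k\})\cup\phi(\{v_1,\dots,v_{k-3}\})\cup\{c_{u_{k-1}},c_{u_k}\}\bigr),
\qquad L^*(u_i)=L(u_i)\setminus\{\phi(v_i),c_{v_i}\},
\]
so that $|L^*(u)|\ge(k+3)-3-(k-3)-2=1$ and $|L^*(u_i)|\ge 5-2=3$. Colour $u$ from $L^*(u)$: this keeps the colouring proper at $u$, keeps $u_{k-1}$ and $u_k$ conflict-free (via the reserved $c_{u_{k-1}},c_{u_k}$), keeps $u_{k-2}$ conflict-free by the previous paragraph, and ensures $\phi(u)\ne\phi(v_i)$ for every $i$, so that each $2$-vertex $u_i$ can still be made conflict-free. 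Finally look at the size-$3$ multiset $M=\{\phi(u_{k-2}),\phi(u_{k-1}),\phi(u_k)\}$, which either contains a colour $\gamma$ of multiplicity one or is constant, say equal to $\alpha$. In the first case colour each $u_i$ from $L^*(u_i)\setminus\{\phi(u),\gamma\}$, which is nonempty; then $\gamma$ appears exactly once in $N_G(u)$. In the second case colour $u_1$ from $L^*(u_1)\setminus\{\phi(u),\alpha\}$ and, for $i\ge 2$, colour $u_i$ from $L^*(u_i)\setminus\{\phi(u),\phi(u_1)\}$; then $\phi(u_1)$ appears exactly once in $N_G(u)$. In either case one checks routinely that the extended colouring is a proper conflict-free $L$-colouring of $G$ (the only vertices with altered neighbourhoods are $u$, the $u_i$, $u_{k-2}$, $u_{k-1}$, $u_k$ and the $v_i$, all handled above), contradicting the choice of $G$.

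The main obstacle --- and the reason for the asymmetric deletion --- is keeping the list budget for $u$ nonempty: deleting $u_{k-2}$ as well would force us to reserve an extra colour in $L^*(u)$ to re-establish $u_{k-2}$'s conflict-freeness, driving $|L^*(u)|$ down to $0$ in the worst case and also creating a genuinely awkward sub-case when two of $\phi(u_{k-2}),\phi(u_{k-1}),\phi(u_k)$ agree while the third clashes with a $2$-vertex's colour. Leaving $u_{k-2}$ undeleted converts that cost into the cheap observation that $\phi$, being conflict-free on $G'$, already separates $\phi(w_1)$ from $\phi(w_2)$; the rest is the familiar trick of reserving a private conflict-free witness for $u$, exactly as in Claim~\ref{claim: number of 2-neighbors}.
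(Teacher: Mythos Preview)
Your proof is correct and follows essentially the same approach as the paper: you delete $u$ together with its $2$-neighbours $u_1,\dots,u_{k-3}$ (but not the $3$-vertex $u_{k-2}$), observe that the conflict-freeness of $\phi$ on $G'$ forces $\phi(w_1)\ne\phi(w_2)$ so that $u_{k-2}$ needs no reserved colour, define the same reduced lists $L^*$, and finish with the same two-case analysis on the multiset $\{\phi(u_{k-2}),\phi(u_{k-1}),\phi(u_k)\}$. Your explanatory final paragraph about why $u_{k-2}$ is left undeleted is a nice addition, but the mathematical content is identical to the paper's argument.
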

\begin{proof}
    Let $u\in V(G)$ be a $k$-vertex ($k\geq 4$) with $N_{G}(u)=\{u_1, u_2, \ldots, u_k\}$.
	Suppose to the contrary that $d_G(u_1)=d_G(u_2)= \dots =d_G(u_{k-3})=2$ and $d_{G}(u_{k-2})=3$. For each $i \in \{1, 2, \ldots, k - 3\}$, let $v_i$ be the neighbor of $u_i$ other than $u$. Let $G' = G - \{u, u_1, u_2, \ldots, u_{k - 3}\}$. By the minimality of $G$, the graph $G'$ admits a proper conflict-free $L$-coloring $\phi$.
	By Claims~\ref{claim-triangle} and \ref{claim-4cycle}, $u_{k-2}, u_{k-1}, u_k, v_1, v_2, \ldots v_{k-3}$ are pairwise distinct vertices of $G$, and hence none of them is an isolated vertex of $G'$.
    For each $w \in \{u_{k-1}, u_k, v_1, v_2, \ldots v_{k-3}\}$, choose a color $c_w \in \mathcal{U}_\phi(w, G')$. 
    
    We define a list assignment $L^*$ by 
    \begin{itemize}
        \item $L^*(u) = L(u) \setminus \left(\phi(\{u_{k-2}, u_{k-1}, u_k, v_1, v_2, \ldots, v_{k-3}\}) \cup \{c_{u_{k-1}}, c_{u_k}\}\right)$ and
        \item for each $i\in \{1,\dots ,k-3\}$, $L^*(u_i) = L(u_i) \setminus \{\phi(v_i), c_{v_i}\}$.
    \end{itemize}
	Clearly, we have $|L^*(u)| \geq 1$, and $|L^*(u_i)| \geq 3$.
	
	Now, we extend $\phi$ to a proper conflict-free $L$-coloring of $G$. First, choose $\phi(u) \in L^*(u)$ arbitrarily. 
    Note that $\mathcal{U}_{\phi}(u_{k-2},G)\neq \emptyset$ whatever color $\phi(u)$ we assign since $d_G(u_{k-2})=3$ and two different colors appear in $N_G(u_{k-2})\setminus \{u\}$.
    Since $u$ has three colored neighbors $\{u_{k-2},u_{k-1},u_k\}$ at this time, either all of them are colored with a color $\alpha$ or there is a color $\beta$ that appears exactly one of them.

    In the former case, let $\phi(u_1) \in L^*(u_1) \setminus \{\phi(u), \alpha\}$, and for each $i \in \{2, 3, \ldots, k-3\}$, choose $\phi(u_i) \in L^*(u_i) \setminus \{\phi(u), \phi(u_1)\}$. 
    In the latter case, for each $i \in \{1, 2, \ldots, k-3\}$, choose $\phi(u_i) \in L^*(u_i) \setminus \{\phi(u), \beta\}$.
    It is easy to check that $\phi$ is a proper $L$-coloring such that $\mathcal{U}_\phi(w,G)\neq \emptyset$ for every $w\in V(G)\setminus \{u\}$.
    Furthermore, since $\phi(u_1)\in \mathcal{U}_\phi(u,G)$ in the former case and $\beta\in \mathcal{U}_\phi(u,G)$ in the latter case, $\phi$ is a proper conflict-free $L$-coloring of $G$, a contradiction.
\end{proof}

\subsection{Discharging}

For each $v \in V(G)$, define the initial charge as $c(v) = 3d(v) - 10$.

We apply the following discharging rules:  
\begin{itemize} 
	\item  Every $4^+$-vertex sends charge $2$ to each adjacent $2$-vertex.
	\item  Every $4^+$-vertex sends charge $\frac{1}{3}$ to each adjacent $3$-vertex.
\end{itemize}

We shall show that after applying the discharging rules, the final charge $c^*(v) \geq 0$ for all $v \in V(G)$. If $v$ is a $3^-$-vertex, then obviously $c^*(v)=0$ by Claim~\ref{claim-33} and the discharging rules.

Let $v$ be a $k$-vertex with $k \geq 4$. For each positive integer $k$, let $n_k$ be the number of $k$-neighbors of $v$. By Claim~\ref{claim: number of 2-neighbors}, $n_2 \leq k-3$. 

If $n_2 = k-3$, then Claim~\ref{claim-k-3 small neighbors} implies that $n_3=0$, and hence
\[
c^*(v) = c(v) - 2n_2 - \frac{1}{3}n_3 = c(v) - 2(k - 3) = 3k - 10 - 2(k - 3) = k - 4 \geq 0.
\]
Otherwise, since $n_2 \leq k-4$ and $n_2+n_3\leq k$, it follows that 
\[c^*(v)= c(v)-2n_2-\frac{1}{3}n_3\geq c(v)-2(k-4)-\frac{1}{3}\cdot 4=3k-10-2k+8-\frac{4}{3}=k-\frac{10}{3}>0.\]
Therefore, after discharging, every element in $V(G)$ has non-negative charge, and hence 
\[
0\leq \sum_{v\in V(G)}c^*(v)=\sum_{v\in V(G)}c(v)=\sum_{v\in V(G)}(3d_G(v)-10)=6|E(G)|-10|V(G)|.
\]
However, this implies that $\text{mad}(G)\geq \frac{2|E(G)|}{|V(G)|}\geq \frac{10}{3}$, a contradiction.
This completes the proof of Theorem~\ref{thm:mad 10 over 3}.

\section{Proof of Theorem \ref{thm:mad 18 over 7}}\label{sec:mad 18 over 7}

Suppose that the statement is false, and let $G$ be a counterexample such that $|V(G)|+|E(G)|$ is as small as possible. Assume that $L$ is a $({\rm degree}+2)$-list assignment of $G$ such that $G$ is not proper conflict-free $L$-colorable. 
As every cycle of length $\ell\neq 5$ is proper conflict-free $4$-choosable, $G$ is not a cycle.

A \emph{$k$-thread} of $G$ is a path of $G$ with $k$ vertices and each of which has degree $2$ in $G$. Note that if a graph contains a $k$-thread, then it also contains an $\ell$-thread for any $1 \leq \ell \leq k-1$. A $k$-thread is said to be \emph{maximal} if there is no $(k+1)$-thread containing it as a subgraph. For a vertex $u$, we say \emph{$u$ is incident with a $k$-thread} if there is a path $uv_1v_2\cdots v_k$ with $v_1v_2\cdots v_k$ being a $k$-thread. Note that a $1$-thread is just a $2$-vertex.

Before we start to analyze the structure of the minimum counterexample, we show statements on proper conflict-free $({\rm degree}+2)$-choosability of some specified graphs.

\subsection{Auxiliary results}

\begin{proposition}[\cite{KSX2026}]\label{prop:c5}
  Let $C=v_0v_1v_2v_3v_4v_0$ be a $5$-cycle.
  Suppose that $L$ is a $4$-list assignment of $C$.
  Then $C$ is not proper conflict-free $L$-colorable if and only if $L(v_0)=L(v_1)=L(v_2)=L(v_3)=L(v_4)$ and $|L(v_0)|=4$.
\end{proposition}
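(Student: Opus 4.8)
The plan is to reduce proper conflict-free $L$-colorings of $C_5$ to rainbow colorings (systems of distinct representatives for the lists) and then apply Hall's theorem. First I would note that for a vertex of degree $2$ the conflict-free condition is equivalent to its two neighbors receiving distinct colors: if the neighbors of $v_i$ share a color, that color appears twice in $N_C(v_i)$ and no color appears exactly once, while if they differ then both colors appear exactly once. Hence a map $\phi$ with $\phi(v_i)\in L(v_i)$ is a proper conflict-free $L$-coloring of $C=v_0v_1v_2v_3v_4v_0$ if and only if consecutive vertices get distinct colors (properness on $C$) and $\phi(v_{i-1})\neq\phi(v_{i+1})$ for all $i$ taken mod $5$. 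The second family of constraints is exactly properness on the complement graph $\overline{C}$, which for a $5$-cycle is again a $5$-cycle, namely $v_0v_2v_4v_1v_3v_0$. Since $E(C)\cup E(\overline{C})=E(K_5)$, the two families together say precisely that $\phi$ is a proper $L$-coloring of $K_5$ on the vertex set $\{v_0,\dots,v_4\}$, i.e.\ that $(L(v_0),\dots,L(v_4))$ has a system of distinct representatives.

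Next I would invoke Hall's theorem: such a system fails to exist exactly when some $I\subseteq\{0,1,2,3,4\}$ satisfies $\bigl|\bigcup_{i\in I}L(v_i)\bigr|<|I|$. Since $|L(v_i)|\geq 4$ for each $i$, any nonempty $I$ with $|I|\le 4$ trivially satisfies $\bigl|\bigcup_{i\in I}L(v_i)\bigr|\geq 4\geq|I|$, so a violating set must be the whole index set $\{0,1,2,3,4\}$, whence $\bigl|\bigcup_i L(v_i)\bigr|\le 4$. As each $L(v_i)$ is a subset of this union and has size at least $4$, it follows that $|L(v_i)|=4$ and $L(v_i)=\bigcup_j L(v_j)$ for every $i$; in other words, all five lists coincide and have common size $4$.

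Combining the two steps gives both implications. For the ``if'' direction, when $L(v_0)=\dots=L(v_4)$ with $|L(v_0)|=4$ the union over the full index set has size $4<5$, Hall's condition fails, there is no system of distinct representatives, and hence no proper conflict-free $L$-coloring of $C$. For the ``only if'' direction, if $C$ admits no proper conflict-free $L$-coloring then it has no such system, and the argument above forces all five lists to be equal with common size $4$.

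I do not expect a serious obstacle here; the one place that needs care is the reduction in the first step — identifying the conflict-free requirement at each vertex with properness on the complementary $5$-cycle, so that proper conflict-free colorings of $C_5$ become precisely rainbow colorings of its vertices. Once this translation is made, the statement is a direct consequence of Hall's marriage theorem.
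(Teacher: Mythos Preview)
Your proof is correct. The paper itself does not prove this proposition --- it is quoted from \cite{KSX2026} without argument --- so there is no in-paper proof to compare against. Your reduction is clean and complete: the observation that a proper conflict-free $L$-coloring of $C_5$ is precisely a proper $L$-coloring of $C_5\cup\overline{C_5}=K_5$, hence a system of distinct representatives for $(L(v_0),\dots,L(v_4))$, is exactly right, and the application of Hall's theorem with the bound $|L(v_i)|\ge 4$ immediately forces the only obstruction to be five identical lists of size $4$. Nothing is missing; the one point you flagged as needing care (the equivalence between conflict-free at a degree-$2$ vertex and distinctness of its two neighbors' colors) is handled correctly.
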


\begin{proposition}\label{prop:complete bip}
    For a positive integer $r$, the complete bipartite graph $K_{2,r}$ is proper conflict-free $({\rm degree}+2)$-choosable.
\end{proposition}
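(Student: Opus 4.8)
The plan is to color $K_{2,r}$ greedily, exploiting that the two vertices on the small side have identical neighborhoods, so that making one of them conflict-free automatically takes care of the other. Write $V(K_{2,r})=\{x_1,x_2\}\cup\{y_1,\dots,y_r\}$ with $x_1,x_2$ the part of size two; then $d(x_i)=r$ and $d(y_j)=2$, so any $({\rm degree}+2)$-list assignment $L$ gives $|L(x_i)|\ge r+2\ge 3$ and $|L(y_j)|\ge 4$, and $N(x_1)=N(x_2)=\{y_1,\dots,y_r\}$.

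First I would color the small side: choose $\phi(x_1)\in L(x_1)$ arbitrarily, then $\phi(x_2)\in L(x_2)\setminus\{\phi(x_1)\}$, which is possible since $|L(x_2)|\ge 2$. Insisting that $\phi(x_1)\ne\phi(x_2)$ guarantees that each $y_j$ sees two distinct colors in its neighborhood $\{x_1,x_2\}$, so the conflict-free condition at every $y_j$ holds regardless of how the $y_j$ are later colored.

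Next I would color the $y_j$'s from the lists $L(y_j)\setminus\{\phi(x_1),\phi(x_2)\}$ (of size $\ge 2$), which keeps the coloring proper. Since $N(x_1)=N(x_2)$, to make both $x_1$ and $x_2$ conflict-free it suffices to ensure that some color is used exactly once on $\{y_1,\dots,y_r\}$. For that, color $y_1$ with an arbitrary $\alpha\in L(y_1)\setminus\{\phi(x_1),\phi(x_2)\}$ and, for each $j\ge 2$, color $y_j$ with a color of $L(y_j)\setminus\{\phi(x_1),\phi(x_2),\alpha\}$, which is nonempty because $|L(y_j)|\ge 4$. Then $\alpha$ occurs exactly once among $\phi(y_1),\dots,\phi(y_r)$, so $x_1$ and $x_2$ are conflict-free and the resulting $L$-coloring is proper conflict-free.

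There is no real obstacle here; the only points requiring care are to fix the colors of $x_1,x_2$ before those of the $y_j$'s, so that the ``appears exactly once'' requirements at $x_1$ and at $x_2$ merge into a single easily satisfied condition, and to note that $|L(y_j)|\ge 4$ is exactly enough to avoid $\phi(x_1),\phi(x_2)$ and $\alpha$ simultaneously. Note that Observation~\ref{obs-degeneracy} only yields proper conflict-free $({\rm degree}+3)$-choosability here, as $K_{2,r}$ is $2$-degenerate, so the saving of one comes precisely from the shared-neighborhood structure.
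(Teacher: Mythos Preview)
Your proof is correct and is essentially the same as the paper's: both arguments first pick three pairwise distinct colors $\phi(x_1),\phi(x_2),\phi(y_1)$ from their respective lists, then color each remaining $y_j$ avoiding these three colors, so that $\phi(y_1)$ is the unique color in the common neighborhood of $x_1$ and $x_2$. The only cosmetic difference is that the paper chooses the three distinct colors in one line rather than sequentially.
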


\begin{proof}
    Let $\{x,y\}$ and $\{u_1,u_2,\dots ,u_r\}$ be the two parts of $K_{2,r}$ and let $L$ be a $({\rm degree}+2)$-list assignment of $K_{2,r}$.
    We first choose three distinct colors $\phi(x)\in L(x)$, $\phi(y)\in L(y)$, and $\phi(u_1)\in L(u_1)$.
    For $i\in \{2,3,\dots ,r\}$, since $|L(u_i)|\geq 4$, we let $\phi(u_i)\in L(u_i)\setminus \{\phi(x),\phi(y),\phi(u_1)\}$.
    By the choice of colors, we have $\phi(u_1)\in \mathcal{U}_{\phi}(x,K_{2,r})$, $\phi(u_1)\in \mathcal{U}_{\phi}(y,K_{2,r})$, and $\phi(x)\in \mathcal{U}_{\phi}(u_i,K_{2,r})$, and thus $\phi$ is a proper conflict-free $L$-coloring of $K_{2,r}$.
\end{proof}

For a tuple $(r,s,t)$ of non-negative integers, let $\Theta_{1, 2\star r, 3\star s, 4\star t}$ be a graph consists of two vertices $x$ and $y$ of degree $r+s+t+1$ and internally disjoint paths $P_0,P_1,\dots ,P_{r+s+t}$ joining $x$ and $y$ such that $|E(P_i)|=1$ for $i=0$, $|E(P_i)|=2$ for $1\leq i\leq r$, $|E(P_i)|=3$ for $r+1\leq i\leq r+s$, and $|E(P_i)|=4$ for $r+s+1\leq i\leq r+s+t$.

\begin{proposition}\label{prop:theta graph}
    Let $(r,s,t)$ be a tuple of non-negative integers such that $r+s\geq 1$ and $r+s+t\geq 2$.
    Then $\Theta_{1,2\star r, 3\star s, 4\star t}$ is proper conflict-free $({\rm degree}+2)$-choosable.
\end{proposition}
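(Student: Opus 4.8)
The plan is to color $x$ and $y$ first with a carefully chosen pair of distinct colors and then color the internal vertices path by path, keeping every internal vertex away from those two colors. Since every internal vertex of a $P_i$ has degree $2$, its list has size at least $4$, while $d(x)=d(y)=r+s+t+1$ gives $|L(x)|,|L(y)|\geq r+s+t+3=:N\geq 5$. The key elementary observation is a reduction: if $\phi(x)=\alpha$, $\phi(y)=\beta$ with $\alpha\neq\beta$, every internal vertex $v$ gets $\phi(v)\notin\{\alpha,\beta\}$, and on each $P_i$ with $i\geq 1$ the (at most three) internal vertices receive pairwise distinct colors, then $\phi$ is automatically a proper conflict-free $L$-coloring. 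Indeed, properness is immediate; for an internal vertex its two path-neighbors get distinct colors (two internal neighbors are distinct by assumption, and an endpoint carries $\alpha$ or $\beta$, which every internal vertex avoids), so some color appears exactly once in its neighborhood; and in $N_G(x)$ the color $\beta$ occurs only on $y$ while in $N_G(y)$ the color $\alpha$ occurs only on $x$, because all internal vertices avoid $\{\alpha,\beta\}$.

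Granting this reduction, coloring a path $P_i$ of length $2$ or $3$ is routine: its one or two internal vertices have lists that, after deleting $\{\alpha,\beta\}$, still have size at least $2$, and distinct representatives can always be chosen from one or two sets of size at least $2$. A path $P_i$ of length $4$, say $xv_1v_2v_3y$, is the only delicate case: one must pick pairwise distinct colors $\phi(v_j)\in S_j:=L(v_j)\setminus\{\alpha,\beta\}$ for $j\in\{1,2,3\}$ with each $|S_j|\geq 2$, and such a choice exists unless $S_1=S_2=S_3$ is a single $2$-element set, which—since $|L(v_j)|\geq 4$—happens exactly when $L(v_1)=L(v_2)=L(v_3)$ is a common $4$-element list containing both $\alpha$ and $\beta$. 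Call $P_i$ \emph{rigid} if $L(v_1)=L(v_2)=L(v_3)$ has size exactly $4$ (a property independent of $\alpha,\beta$), write $T_i$ for this common list, and let $R$ be the set of rigid length-$4$ paths, so $|R|\leq t$.

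It thus remains to choose a distinct pair $\alpha\in L(x)$, $\beta\in L(y)$ with $\{\alpha,\beta\}\not\subseteq T_i$ for every $i\in R$; this is the step I expect to be the crux, and I would handle it by counting ordered pairs. The number of ordered pairs $(\alpha,\beta)$ with $\alpha\in L(x)$, $\beta\in L(y)$, $\alpha\neq\beta$ is at least $N(N-1)$, whereas each $T_i$ excludes exactly $|T_i|(|T_i|-1)=12$ ordered pairs, so at most $12t$ ordered pairs are excluded in total. Since $r+s\geq 1$ forces $N\geq t+4$, we get $N(N-1)\geq(t+4)(t+3)=t^2+7t+12>12t$ because $t^2-5t+12>0$ for all $t$; hence an admissible pair $(\alpha,\beta)$ exists. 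Fixing such a pair and coloring every $P_i$ with $i\geq 1$ as above yields the desired proper conflict-free $L$-coloring. The hypothesis $r+s\geq 1$ enters exactly here: without it one only has $N=t+3$, and $(t+3)(t+2)>12t$ fails for $2\leq t\leq 6$, reflecting that the statement genuinely needs a short path between $x$ and $y$.
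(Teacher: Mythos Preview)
Your proof is correct and follows essentially the same approach as the paper: choose a pair $(\alpha,\beta)\in L(x)\times L(y)$ by a counting argument so that every length-$4$ path can be colored internally avoiding $\{\alpha,\beta\}$, then extend greedily. The only cosmetic difference is that the paper imposes the slightly cruder condition $\{\alpha,\beta\}\not\subseteq L(v_i)$ on the middle vertex of each length-$4$ path (yielding the unordered-pair inequality $6t<\binom{N}{2}$), whereas you sharpen the obstruction via Hall's theorem to ``rigid'' paths and count ordered pairs---but the two inequalities are equivalent and the overall argument is the same.
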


\begin{proof}
    We fix a tuple $(r,s,t)$ of non-negative integers with $r+s\geq 1$ and $r+s+t\geq 2$.
    Let $x$ and $y$ be two adjacent vertices of degree $r+s+t+1$ and let $P_1,\dots ,P_{r+s+t}$ be internally disjoint paths joining $x$ and $y$ such that 
    \begin{equation*}
        P_i=\begin{cases}
            xu_iy\quad &\text{if}\quad 1\leq i\leq r,\\
            xu_iv_iy\quad &\text{if}\quad r+1\leq i\leq r+s, \quad\text{and}\\
            xu_iv_iw_iy\quad &\text{if}\quad r+s+1\leq i\leq r+s+t.
        \end{cases}
    \end{equation*}
    Let $L$ be a $({\rm degree}+2)$-list assignment of $\Theta_{1, 2\star r, 3\star s, 4\star t}$.
    By deleting extra colors, we may assume that $|L(x)|=|L(y)|=r+s+t+3$ and $|L(u_i)|=|L(v_i)|=|L(w_i)|=4$.

    Let $\mathcal{C}:=\{\{\alpha,\beta\}\mid \alpha\neq \beta, \alpha\in L(x), \beta\in L(y)\}$.
    Since $|L(x)|=|L(y)|=r+s+t+3$, we have that $|\mathcal{C}|\geq \binom{r+s+t+3}{2}$.

    For each set of two colors $\{\alpha,\beta\}\in \mathcal{C}$, let 
    \[I_{\{\alpha,\beta\}}:=\left\{i\mid r+s+1\leq i\leq r+s+t, \{\alpha,\beta\}\subseteq L(v_i)\right\}.\]
    
    Since $|L(v_i)|=4$ for each $i\in \{r+s+1,\dots ,r+s+t\}$, there are at most $\binom{4}{2}=6$ pairs of distinct colors in $\mathcal{C}$ that are contained in $L(v_i)$.
    By the assumptions $r+s\geq 1$ and $r+s+t\geq 2$, we have
    \begin{align*}
        \sum_{\{\alpha,\beta\}\in \mathcal{C}} |I_{\{\alpha,\beta\}}|\leq \sum_{i=r+s+1}^{r+s+t}\left|\binom{L(v_i)}{2}\right|=6t<\frac{(r+s+t+3)(r+s+t+2)}{2}=\binom{r+s+t+3}{2}\leq |\mathcal{C}|,
    \end{align*}
    which implies that there is a pair $\{\alpha,\beta\}\in \mathcal{C}$ such that $I_{\{\alpha,\beta\}}=\emptyset$.
    Without loss of generality, we may assume that $\alpha\in L(x)$ and $\beta\in L(y)$.
    Let $\phi(x)=\alpha$ and $\phi(y)=\beta$.
    For other vertices, we choose the colors as follows.
    \begin{itemize}
        \item For each $i\in \{1,\dots ,r\}$, let $\phi(u_i)\in L(u_i)\setminus \{\alpha,\beta\}$.
        \item For each $i\in \{r+1,\dots ,r+s\}$, let $\phi(u_i)\in L(u_i)\setminus \{\alpha,\beta\}$ and $\phi(v_i)\in L(v_i)\setminus \{\alpha,\beta,\phi(u_i)\}$.
        \item For each $i\in \{r+s+1,\dots ,r+s+t\}$, let $\phi(u_i)\in L(u_i)\setminus \{\alpha,\beta\}$, $\phi(w_i)\in L(w_i)\setminus \{\alpha,\beta,\phi(u_i)\}$, and $\phi(v_i)\in L(v_i)\setminus \{\alpha,\beta,\phi(u_i),\phi(w_i)\}$, where $\phi(v_i)$ can be chosen since $|L(v_i)\cap \{\alpha,\beta\}|\leq 1$.
    \end{itemize}
    Then we have $\beta\in \mathcal{U}_{\phi}(x,\Theta_{1,2\star r, 3\star s, 4\star t})$ and $\alpha\in \mathcal{U}_{\phi}(y,\Theta_{1,2\star r, 3\star s, 4\star t})$, and it is easy to check that $\phi$ is a proper conflict-free $L$-coloring.
\end{proof}
 
\subsection{Forbidden configurations}

In the following claims, we give some forbidden configurations of the minimum counterexample $G$. 
It is easy to see that the vertex of degree 1 is reducible, and thus we may assume that $\delta(G)\geq 2$.

\begin{claim}\label{claim:3vertex 2thread}
	A 3-vertex is not incident with a 2-thread of $G$.
\end{claim}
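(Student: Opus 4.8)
The plan is a standard reducibility argument: delete the offending thread, recolor the rest of $G$ using the minimality of $G$, and then extend the coloring. So suppose $G$ contains a $3$-vertex $u$ incident with a $2$-thread. Write $N_G(u)=\{x,y,v_1\}$ with $d_G(v_1)=2$; let $v_2$ be the neighbor of $v_1$ other than $u$ (so $d_G(v_2)=2$, since $uv_1v_2$ is a $2$-thread), and let $v_3$ be the neighbor of $v_2$ other than $v_1$. Then $v_3\notin\{v_1,v_2\}$, although a priori $v_3$ may coincide with $u$, $x$, or $y$. Set $G'=G-\{v_1,v_2\}$. Since $v_1,v_2$ are $2$-vertices whose neighborhoods are contained in $\{u,v_1,v_2,v_3\}$, the only vertices of $G'$ whose degree differs from their $G$-degree are $u$ and $v_3$, and neither of them becomes isolated; moreover $d_{G'}(u)=d_G(u)-1=2$ whenever $v_3\ne u$. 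The restriction of $L$ to $V(G')$ is a $({\rm degree}+2)$-list assignment of $G'$ (with possibly larger lists at $u$ and $v_3$). If a component of $G'$ is a $C_5$, then it contains $u$ or $v_3$ as a degree-$2$ vertex; a short check shows such a vertex has degree $3$ in $G$, so its $L$-list has size at least $5>4$, and Proposition~\ref{prop:c5} $L$-colors that component. Every other component is a connected proper subgraph of $G$ other than $C_5$, hence proper conflict-free $L$-colorable by the minimality of $G$. Thus $G'$ admits a proper conflict-free $L$-coloring $\phi$.

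Next I would extend $\phi$ to $v_1$ and $v_2$. The only edges of $G$ outside $G'$ are $uv_1$, $v_1v_2$, $v_2v_3$, and the only vertices of $G'$ with a neighbor in $\{v_1,v_2\}$ are $u$ and $v_3$, so $\mathcal{U}_\phi(w,G)=\mathcal{U}_\phi(w,G')\ne\emptyset$ for every $w\in V(G')\setminus\{u,v_3\}$. Properness forces $\phi(v_1)\notin\{\phi(u),\phi(v_2)\}$ and $\phi(v_2)\notin\{\phi(v_1),\phi(v_3)\}$; the conflict-free condition at $v_1$ (neighbors $u,v_2$) is $\phi(u)\ne\phi(v_2)$, and at $v_2$ (neighbors $v_1,v_3$) is $\phi(v_1)\ne\phi(v_3)$. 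The key observations are: (i) when $v_3\ne u$, the vertex $u$ has degree $2$ in $G'$, so $\mathcal{U}_\phi(u,G')\ne\emptyset$ forces $\phi(x)\ne\phi(y)$, and then a two-case check on whether $\phi(v_1)\in\{\phi(x),\phi(y)\}$ shows $\mathcal{U}_\phi(u,G)\ne\emptyset$ regardless of $\phi(v_1)$; and (ii) the conflict-free condition at $v_3$ costs at most one further forbidden color for $\phi(v_2)$ — namely, if $\mathcal{U}_\phi(v_3,G')$ has at least two colors then no choice of $\phi(v_2)$ can spoil it, while if $\mathcal{U}_\phi(v_3,G')=\{f\}$ it suffices to forbid $\phi(v_2)=f$. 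The degenerate case $v_3=u$ (a triangle $uv_1v_2$ with $u$ having one further neighbor $z$) is handled the same way with $f:=\phi(z)$: there one checks directly that $\phi(v_1)\ne\phi(v_2)$ together with $\phi(v_2)\ne\phi(z)$ already makes $u=v_3$ conflict-free.

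Hence it suffices to choose $\phi(v_2)\in L(v_2)\setminus\{\phi(u),\phi(v_3),f\}$ and then $\phi(v_1)\in L(v_1)\setminus\{\phi(u),\phi(v_3),\phi(v_2)\}$; since $|L(v_1)|,|L(v_2)|\ge4$ both choices exist, and by the preceding paragraph the resulting $\phi$ is a proper conflict-free $L$-coloring of $G$, contradicting the choice of $G$. I do not expect a genuine obstacle: the only delicate points are checking that $G'$ is $L$-colorable — the one exceptional shape $G'=C_5$ being disposed of by Proposition~\ref{prop:c5} — and the bookkeeping of the conflict-free conditions at the two boundary vertices $u$ and $v_3$, in particular in the degenerate configurations where $v_3$ equals $u$ or a neighbor of $u$.
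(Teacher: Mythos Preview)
Your argument is correct and follows essentially the same route as the paper: delete the two thread vertices, colour $G'$ by minimality together with Proposition~\ref{prop:c5} for any $C_5$ component, then extend by forbidding $\{\phi(u),\phi(v_3),f\}$ at $v_2$ and $\{\phi(u),\phi(v_3),\phi(v_2)\}$ at $v_1$, using that $d_G(u)=3$ forces $\mathcal{U}_\phi(u,G)\ne\emptyset$ once two colours appear on $N_G(u)$. You are somewhat more explicit than the paper about the degenerate case $v_3=u$ and about why a $C_5$ component of $G'$ must contain a vertex with list size $\ge 5$, but the substance is identical.
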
 
\begin{proof}
	Suppose that a 3-vertex $u$ is incident with a 2-thread $v_1v_2$ with $uv_1\in E(G)$.
    Let $v$ be the neighbor of $v_2$ distinct from $v_1$.
    Note that $|L(v_i)|\geq 4$ for $i\in \{1,2\}$.
    Let $G'=G-\{v_1,v_2\}$.
    Then $G'$ is proper conflict-free $L$-colorable; if $G'$ has a component that is isomorphic to $C_5$, then it follows from Proposition~\ref{prop:c5} and the fact that every component of $G'$ has a vertex $x$ with $|L(x)|\geq d_G(x)+2\geq d_{G'}(x)+3$, otherwise, it follows from the minimality of $G$.
    Let $\phi$ be a proper conflict-free $L$-coloring of $G'$.
    As $d_{G'}(v)\geq\delta(G)-1\geq 1$, we choose $c_{v} \in \mathcal{U}_{\phi}(v,G')$.
    We extend $\phi$ to an $L$-coloring of $G$ by choosing $\phi(v_2)\in L(v_2)\setminus \{\phi(u),\phi(v),c_v\}$ and choosing $\phi(v_1)\in L(v_1)\setminus \{\phi(v_2),\phi(u),\phi(v)\}$.
    Since $d_G(u)=3$ and at least two colors appear in the neighbors of $u$, $\phi$ is a proper conflict-free $L$-coloring of $G$, a contradiction.
\end{proof}

\begin{claim}\label{claim:long thread}
	$G$ has no $4$-thread.
\end{claim}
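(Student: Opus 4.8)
The plan is to proceed by the same deletion-and-extension strategy used in the preceding claims. Suppose $G$ contains a $4$-thread $v_1v_2v_3v_4$, and let $u$ be the neighbor of $v_1$ other than $v_2$ and $w$ be the neighbor of $v_4$ other than $v_3$ (note $u$ and $w$ exist since $\delta(G)\geq 2$, and by Claim~\ref{claim:3vertex 2thread} both $u$ and $w$ are $4^+$-vertices, though we may not even need this). I would delete the four thread vertices, set $G'=G-\{v_1,v_2,v_3,v_4\}$, and invoke minimality to get a proper conflict-free $L$-coloring $\phi$ of $G'$; as in Claim~\ref{claim:3vertex 2thread}, if some component of $G'$ is a $C_5$ we instead use Proposition~\ref{prop:c5} together with the fact that the endpoint $u$ (or $w$) of that component has a list of size $d_G(\cdot)+2\geq d_{G'}(\cdot)+3$, so $G'$ is still proper conflict-free $L$-colorable.

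Next I would record the constraints. We have $|L(v_i)|\geq 4$ for each $i\in\{1,2,3,4\}$. From $\phi$ we pick $c_u\in\mathcal{U}_\phi(u,G')$ and $c_w\in\mathcal{U}_\phi(w,G')$ (possible unless $u$ or $w$ became isolated in $G'$; if $u$ is isolated in $G'$ then $d_G(u)=1$, contradicting $\delta(G)\geq 2$, so this is fine — the only subtlety is if $u=w$, which I would handle as a minor separate case, or rule out via a short $C_5$-type argument). The colors already fixed on vertices that will end up adjacent to $v_1$ or $v_4$ are $\phi(u)$ and $\phi(w)$. I would then colour greedily inward: choose $\phi(v_2)\in L(v_2)\setminus\{\phi(u)\}$ freely (two or more choices remain), then $\phi(v_3)\in L(v_3)\setminus\{\phi(w),\phi(v_2)\}$, then $\phi(v_1)\in L(v_1)\setminus\{\phi(u),c_u,\phi(v_2)\}$, and finally $\phi(v_4)\in L(v_4)\setminus\{\phi(w),c_w,\phi(v_3),\phi(v_1)\}$ — checking at each step that the number of forbidden colours is at most $3$, so a valid colour remains.

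It then remains to verify the conflict-free condition at every vertex. For $u$: its neighbours in $G$ are its $G'$-neighbours plus $v_1$; the colour $c_u$ was chosen to appear exactly once among the $G'$-neighbours, and since $\phi(v_1)\neq c_u$ by construction, $c_u\in\mathcal{U}_\phi(u,G)$ still holds. Symmetrically $c_w\in\mathcal{U}_\phi(w,G)$. For the degree-$2$ vertices $v_1,\dots,v_4$: each has exactly two neighbours, which receive distinct colours by properness, so any colour in the neighbourhood is unique — hence the condition is automatic. For all other vertices of $G'$ nothing changed. Properness is immediate from the choices. This yields a proper conflict-free $L$-coloring of $G$, contradicting the choice of $G$.

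The main obstacle I expect is the bookkeeping around degenerate overlaps: the possibility that $u=w$ (so the thread together with an edge $uw$-path forms a short cycle or theta-like subgraph), and the possibility that $u$ or $w$ coincides with a vertex of an exceptional $C_5$ component of $G'$. In the $u=w$ case the subgraph is a cycle $uv_1v_2v_3v_4u$ of length $5$, which by Proposition~\ref{prop:c5} is already problematic only in a very restricted list situation — but here $u$ has degree $\geq 2$ in $G'$ minus the thread only if $d_G(u)\geq 4$, in which case $|L(u)|\geq 6$ is far more than enough, so a direct argument disposes of it; I would state this carefully but it should not be deep. The "bounded forbidden set" counts in the greedy step are routine since each list has size at least $4$ and we never forbid more than three colours at a time.
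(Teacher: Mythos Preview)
There is a genuine gap in the greedy extension. Your assertion that ``each has exactly two neighbours, which receive distinct colours by properness'' is false: properness guarantees $\phi(v_i)$ differs from each of its neighbours, not that the two neighbours of $v_i$ differ from one another. Concretely, the conflict-free condition at $v_2$ requires $\phi(v_1)\neq\phi(v_3)$, and the condition at $v_3$ requires $\phi(v_2)\neq\phi(v_4)$. Your choice of $\phi(v_1)\in L(v_1)\setminus\{\phi(u),c_u,\phi(v_2)\}$ does not avoid $\phi(v_3)$, so $\phi(v_1)=\phi(v_3)$ is possible. Likewise your final step forbids $\phi(v_1)$ for $v_4$, which is irrelevant (they are not adjacent, nor does any conflict-free condition link them); presumably you meant $\phi(v_2)$, but then you are forbidding the four colours $\{\phi(w),c_w,\phi(v_3),\phi(v_2)\}$ from a list of size exactly $4$, which may leave nothing. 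A count shows each $v_i$ carries four genuine constraints against a list of size four, so no ordering of a plain greedy sweep can succeed in general.

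The paper breaks this tie with one extra idea: after setting $L^*(v_1)=L(v_1)\setminus\{\phi(u),c_u\}$ and trimming it to size exactly $2$, it chooses $\phi(v_2)\in L^*(v_2)\setminus L^*(v_1)$ (possible since $|L^*(v_2)|\geq 3>2=|L^*(v_1)|$). This guarantees $\phi(v_2)\notin L^*(v_1)$, so that when $\phi(v_1)$ is chosen last from $L^*(v_1)\setminus\{\phi(v_3)\}$ it is automatically distinct from $\phi(v_2)$ without spending a constraint. The order is then $v_2,v_4,v_3,v_1$, and at every step at most $|L^*(\cdot)|-1$ colours are forbidden. Your side remarks on the $u=w$ case and on $C_5$ components of $G'$ are fine; it is only the central colouring step that needs this ``choose outside the small list'' trick.
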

\begin{proof}
	Suppose that $P = v_1v_2v_3v_4$ is a $4$-thread, where $u$ is the neighbor of $v_1$ distinct from $v_2$ and $v$ is the neighbor of $v_4$ distinct from $v_3$. Let $G' = G - V(P)$. By Proposition~\ref{prop:c5} and the minimality of $G$, the graph $G'$ admits a proper conflict-free $L$-coloring $\phi$. For each $w \in \{u, v\}$, choose a color $c_w \in \mathcal{U}_\phi(w, G')$. Let $L^*(v_1) = L(v_1) \setminus  \{c_u, \phi(u)\}$, $L^*(v_2) =  L(v_2) \setminus  \{\phi(u)\}$, $L^*(v_3) =  L(v_3) \setminus  \{\phi(v)\}$ and $L^*(v_4) = L(v_4) \setminus  \{c_v, \phi(v)\}$. Note that $|L^*(v_i)| \geq 2$ for $i\in \{1,4\}$ and $|L^*(v_j)| \geq 3$ for $j \in \{2,3\}$. We may assume that $|L^*(v_1)| =2$ by deleting extra colors from the list. We then extend $\phi$ to a coloring of $G$ as follows: First, choose $\phi(v_2) \in L^*(v_2) \setminus L^*(v_1)$;  
	then choose $\phi(v_4) \in L^*(v_4) \setminus \{\phi(v_2)\}$;  
	next choose $\phi(v_3) \in L^*(v_3) \setminus \{\phi(v_2), \phi(v_4)\}$;  
	and finally, choose $\phi(v_1) \in L^*(v_1) \setminus \{\phi(v_3)\}$. It is clear that $\phi$ is proper by the choices. We have that $c_w \in \mathcal{U}_{\phi}(w, G)$ for each of $w \in \{u,v\}$, and the two neighbors of each $v_i$ received two distinct colors in $\phi$. Therefore, $\phi$ is a proper conflict-free coloring of $G$, a contradiction. 
\end{proof}

\begin{claim}\label{claim:pendant cycle}
    $G$ has no cycle $u_1u_2\cdots u_{\ell}u_1$ with $d_G(u_2)=\cdots =d_G(u_{\ell})=2$.
\end{claim}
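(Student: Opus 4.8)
The plan is to argue by reducibility, exactly in the spirit of the previous claims. Suppose for contradiction that $G$ contains such a cycle $C = u_1u_2\cdots u_\ell u_1$ with $d_G(u_2)=\cdots=d_G(u_\ell)=2$. Since $G$ is not a cycle, $u_1$ has a neighbor outside $C$, so $d_G(u_1)\ge 3$. Moreover $u_2u_3\cdots u_\ell$ is an $(\ell-1)$-thread of $G$, so Claim~\ref{claim:long thread} forces $\ell-1\le 3$; hence $\ell\in\{3,4\}$ and $C$ is a triangle or a $4$-cycle pendant at $u_1$.

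Set $G' = G-\{u_2,\dots,u_\ell\}$. As the only edges leaving $\{u_2,\dots,u_\ell\}$ are $u_1u_2$ and $u_1u_\ell$, $G'$ is connected, and it is strictly smaller than $G$; as in the proof of Claim~\ref{claim:long thread}, Proposition~\ref{prop:c5} together with the minimality of $G$ gives a proper conflict-free $L$-coloring $\phi$ of $G'$ (if $G'\cong C_5$ we use that $|L(u_1)|\ge d_G(u_1)+2\ge d_{G'}(u_1)+4>4$, so Proposition~\ref{prop:c5} still yields a coloring). Write $A=\phi(u_1)$; since $d_{G'}(u_1)=d_G(u_1)-2\ge 1$, the vertex $u_1$ is non-isolated in $G'$, so we may pick $c\in\mathcal{U}_\phi(u_1,G')$, and note $c\ne A$ because $\phi$ is proper. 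The key idea is to recolor the thread $u_2,\dots,u_\ell$ while keeping $c$ off the two thread-vertices adjacent to $u_1$ (namely $u_2$ and $u_\ell$): then $c$ still appears exactly once among $N_G(u_1)$, so $u_1$ stays conflict-free, and every other vertex of $G'$ keeps its old colored neighborhood. Since each $u_i$ with $2\le i\le\ell$ has $|L(u_i)|\ge d_G(u_i)+2=4$, there is room to do this.

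Concretely, when $\ell=3$ I would choose $\phi(u_2)\in L(u_2)\setminus\{A,c\}$ and then $\phi(u_3)\in L(u_3)\setminus\{A,c,\phi(u_2)\}$ (three pairwise distinct colors removed from a list of size at least $4$); properness is immediate, conflict-freeness at $u_2$ and at $u_3$ is automatic since their two neighbors $u_1$ and the other thread-vertex get distinct colors (as both are adjacent to $u_1$, which has color $A$), and conflict-freeness at $u_1$ holds via $c$. When $\ell=4$ I would first choose $\phi(u_2)\in L(u_2)\setminus\{A,c\}$, then $\phi(u_4)\in L(u_4)\setminus\{A,c,\phi(u_2)\}$, and finally $\phi(u_3)\in L(u_3)\setminus\{A,\phi(u_2),\phi(u_4)\}$; at each step the forbidden colors are pairwise distinct, so lists of size at least $4$ suffice. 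Then properness is clear, conflict-freeness at $u_3$ holds because $\phi(u_2)\ne\phi(u_4)$, at $u_2$ and $u_4$ because their neighbors are $u_1$ (color $A$) and $u_3$ (whose color is $\ne A$), and at $u_1$ again via $c$. In either case $\phi$ extends to a proper conflict-free $L$-coloring of $G$, contradicting the choice of $G$.

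The only mildly delicate points are ensuring $G'$ is colorable even when it is a $5$-cycle—handled by Proposition~\ref{prop:c5} and the oversized list at $u_1$—and, in the case $\ell=4$, ordering the three color choices on $u_2,u_4,u_3$ so that the size-$4$ lists never run out; everything else is routine bookkeeping.
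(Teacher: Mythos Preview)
Your proof is correct and follows essentially the same approach as the paper: reduce to $\ell\in\{3,4\}$ via the $4$-thread claim, delete the thread, color $G'$ using minimality (and Proposition~\ref{prop:c5} for the $C_5$ case), protect a unique color $c$ at $u_1$, and color $u_2,u_\ell$ avoiding $\{\phi(u_1),c\}$ before filling in $u_3$. Your ordering and color choices in both the $\ell=3$ and $\ell=4$ cases coincide with the paper's.
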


\begin{proof}
    Suppose that $G$ has a cycle $u_1u_2\cdots u_{\ell}u_1$ with $d_G(u_2)=\cdots =d_G(u_{\ell})=2$.
    Since $G$ has no $4$-thread by Claim~\ref{claim:long thread}, we know that $\ell\in \{3,4\}$.

    Let $G'=G-\{u_2, u_3, \dots ,u_{\ell}\}$.
    By Proposition~\ref{prop:c5} and the minimality of $G$, $G'$ admits a proper conflict-free $L$-coloring $\phi$.
    Since $G$ is not a cycle, we have $d_{G'}(u_1)=d_G(u_1)-2\geq 1$, and hence let $c_{u_1}$ be a color in $\mathcal{U}_{\phi}(u_1,G')$.

    If $\ell=3$, then we let $\phi(u_2)\in L(u_2)\setminus \{\phi(u_1),c_{u_1}\}$ and $\phi(u_3)\in L(u_3)\setminus \{\phi(u_1),\phi(u_2),c_{u_1}\}$.
    If $\ell=4$, then we let $\phi(u_2)\in L(u_2)\setminus \{\phi(u_1),c_{u_1}\}$, $\phi(u_4)\in L(u_4)\setminus \{\phi(u_1),\phi(u_2),c_{u_1}\}$, and $\phi(u_3)\in L(u_3)\setminus \{\phi(u_1),\phi(u_2),\phi(u_3)\}$.
    For each case, it is easy to verify that the coloring $\phi$ is a proper conflict-free $L$-coloring of $G$, a contradiction.
\end{proof}

\begin{claim}\label{claim:3plus neighbor}
	Every $3^+$-vertex is adjacent to at least one $3^+$-vertex.
\end{claim}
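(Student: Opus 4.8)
The plan is to show this configuration is reducible in the minimum counterexample $G$. Suppose not: let $u$ be a $3^+$-vertex with $d_G(u)=k\ge 3$, every neighbour $u_1,\dots ,u_k$ of which is a $2$-vertex, and for each $i$ let $v_i$ be the neighbour of $u_i$ other than $u$. By Claim~\ref{claim:pendant cycle} no $v_i$ lies in $\{u,u_1,\dots ,u_k\}$ (otherwise a triangle through $u$ would carry two consecutive $2$-vertices), and if $k=3$ each $v_i$ is a $3^+$-vertex by Claim~\ref{claim:3vertex 2thread}. Now delete the whole star: set $G'=G-\{u,u_1,\dots ,u_k\}$. A $C_5$-component of $G'$ must contain some $v_i$, since otherwise it is already a component of the connected graph $G$, forcing $G=C_5$; such a $v_i$ has a deleted neighbour, so $d_G(v_i)\ge 3$ and $|L(v_i)|\ge 5$, and (after deleting surplus colours so that the lists on that $5$-cycle are not all identical) Proposition~\ref{prop:c5} makes the component proper conflict-free $L$-colourable, while every other component is by the minimality of $G$. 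Fix such a colouring $\phi$ of $G'$; if two $v_i$ coincide there is already enough slack (see below), so assume the $v_i$ are pairwise distinct, whence $d_{G'}(v_i)=d_G(v_i)-1\ge 1$, so $v_i$ is non-isolated in $G'$.

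For each $i$ with $|\mathcal U_\phi(v_i,G')|=1$ let $c_{v_i}$ be its unique element; put $L_0=L(u)\setminus\phi(\{v_1,\dots ,v_k\})$ and $L_i=L(u_i)\setminus\bigl(\{\phi(v_i)\}\cup\{c_{v_i}:|\mathcal U_\phi(v_i,G')|=1\}\bigr)$, so $|L_0|\ge (k+2)-k=2$ and $|L_i|\ge 2$ (with $|L_i|\ge 3$ if $|\mathcal U_\phi(v_i,G')|\ge 2$, and $|L_0|\ge 3$ if the $v_i$ are not distinct). Extending $\phi$ to a proper conflict-free $L$-colouring of $G$ is precisely the task of choosing $\phi(u)\in L_0$ and $\phi(u_i)\in L_i\setminus\{\phi(u)\}$ with some colour occurring exactly once among $\phi(u_1),\dots ,\phi(u_k)$: properness is automatic, conflict-freeness at $u_i$ follows from $\phi(u)\ne\phi(v_i)$ (as $\phi(v_i)\notin L_0$), at $v_i$ from the definition of $L_i$, and elsewhere it is inherited from $\phi$. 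A short combinatorial lemma on colouring a star so that one leaf colour is private shows this is possible unless $L_0=L_1=\dots =L_k$ is one fixed $2$-element set $\{\alpha,\beta\}$; in that case one reads off $|L(u)|=k+2$, $L(u)=\{\alpha,\beta\}\cup\{\phi(v_1),\dots ,\phi(v_k)\}$ (so the $v_i$ are distinct and $k$ distinct colours are used), $|L(u_i)|=4$, $L(u_i)=\{\alpha,\beta,\phi(v_i),c_{v_i}\}$, and $|\mathcal U_\phi(v_i,G')|=1$ for every $i$.

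The main obstacle is to dispose of this last configuration, and here I would re-choose the colouring of $G'$: it suffices to obtain a proper conflict-free $L$-colouring $\phi'$ of $G'$ under which some $v_i$ gets a colour outside $L(u)$, or two $v_i$ get equal colours, or some $v_i$ has $|\mathcal U_{\phi'}(v_i,G')|\ge 2$, since each forces $|L_0|\ge 3$ or $|L_i|\ge 3$ and the lemma then applies. Using $|L(v_1)|\ge d_{G'}(v_1)+3$ there are at least three colours of $L(v_1)$ unused on $N_{G'}(v_1)$, hence at least three proper recolourings of $v_1$ inside $G'$; the hard part is to choose one that also keeps every neighbour $w$ of $v_1$ (and $v_1$ itself) conflict-free while triggering one of the three escape conditions --- which requires careful bookkeeping of whether each $\mathcal U_\phi(w,G')$ is a singleton and whether it equals the old or the new colour of $v_1$. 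Should every recolouring be blocked, the fallback is that the resulting rigidity exhibits a copy of $K_{2,r}$ or of $\Theta_{1,2\star r,3\star s,4\star t}$ among $u$, the $u_i$ and the $v_i$, contradicting Proposition~\ref{prop:complete bip} or Proposition~\ref{prop:theta graph}. Everything outside this last step is routine.
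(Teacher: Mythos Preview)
Your ``short combinatorial lemma'' is false. Take $k=6$ with $L_0=\{a,b\}$, $L_1=L_2=\{a,b\}$, $L_3=L_4=\{a,c\}$, $L_5=L_6=\{b,c\}$. If $\phi(u)=a$ then $u_1,u_2$ are forced to $b$, $u_3,u_4$ to $c$, and $u_5,u_6\in\{b,c\}$, so every colour among the leaves occurs at least twice; if $\phi(u)=b$ the picture is symmetric in $a$ and $c$. Neither choice yields a private leaf colour, yet the lists are not all equal. Nothing in your setup rules this pattern out (it only needs $|L_0|=|L_i|=2$, which happens whenever the $\phi(v_i)$ are distinct elements of $L(u)$ and each $|\mathcal U_\phi(v_i,G')|=1$), and the obstruction persists even when $|L_0|\ge 3$: with $L_0=\{a,b,c\}$ and the same $L_1,\dots,L_6$, the choice $\phi(u)=c$ forces $u_3,u_4$ to $a$ and $u_5,u_6$ to $b$, and again no private colour is available. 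So your ``unique bad case'' is not unique, and the recolouring-of-$v_1$ plan --- which you already flag as only a sketch, with the bookkeeping left undone and the $K_{2,r}/\Theta$ fallback merely asserted --- is aimed at too small a target even if it were carried out.

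The paper sidesteps this by reversing the order of decisions. It \emph{first} selects a colour $\alpha\in L(u_0)$ lying in at most $\min\{3,k-1\}$ of the leaf lists $L(u_i)$ (a pigeonhole count, playing $\sum_i|L(u_i)|=4k$ against $|L(u_0)|=k+2$), and then deletes $\alpha$ from every $L(v_i)$ \emph{before} colouring $G'$. This forces $\phi(v_i)\ne\alpha$ up front, so setting $\phi(u_0)=\alpha$ automatically gives each $u_i$ a private neighbour colour; all that remains is to arrange a private colour at $u_0$, which is easy because at most three leaves have $\alpha$ in their list and hence need special handling. The essential difference is that the paper controls the interaction between $\phi(u_0)$ and the $\phi(v_i)$ in advance, rather than trying to thread the needle after $G'$ has already been coloured.
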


\begin{proof}
	Suppose that there is a $k$-vertex ($k\geq 3$) $u_0 \in V(G)$ with $N_{G}(u_0) = \{u_1,\ldots, u_k\}$ such that $d_{G}(u_i) =2$ for each $i\in \{1,\dots ,k\}$. 
    We fix such a vertex $u_0$ so that the degree $k$ is as small as possible.
    By deleting extra colors, we may assume that $|L(u_i)|=4$ for each $i$. For each color $c\in L(u_0)$, let $U_c=\{u_i \mid  c \in L(u_i),\; 1 \leq i\leq k\}$. We first claim that there is a color $\alpha\in L(u_0)$ such that $|U_{\alpha}|\leq \min\{3,k-1\}$. Indeed, if $k=3$, then we may choose a color $\alpha \in L(u_0) \setminus L(u_1)$, which satisfies that $U_{\alpha} \subseteq \{u_2, u_3\}$. If $k \geq 4$, then since 
    $\sum_{c\in L(u_0)}|U_c|\leq \sum_{i=1}^{k} |L(u_i)|=4k$, there is a color $\alpha$ such that $|U_{\alpha}|\leq \left\lfloor\frac{4k}{|L(u_0)|}\right\rfloor\leq \left\lfloor\frac{4k}{k+2}\right\rfloor\leq 3$, as desired.
    Without loss of generality, we may assume that $u_1 \notin U_{\alpha}$.
	
	Let $v_i$ be the neighbor of $u_i$ distinct from $u_0$. By Claim~\ref{claim:pendant cycle}, we know that $v_i\neq u_j$ for any distinct $i$ and $j$. Note that it is possible that $v_i=v_j$ for distinct $i$ and $j$.
    Let $G'=G-\{u_0,u_1,\dots ,u_k\}$.
    If $G'$ has an isolate vertex $v$, then it must be that $N_G(v)\subseteq \{u_1,u_2,\dots ,u_k\}$.
    By the minimality of $k$, we have that $N_G(v)=\{u_1,u_2,\dots ,u_k\}$.
    This implies that $G$ is a complete bipartite graph with parts $\{u_0,v\}$ and $\{u_1,u_2,\dots ,u_k\}$, and $G$ is proper conflict-free $L$-colorable by Proposition~\ref{prop:complete bip}, a contradiction.

    Thus, $G'$ has no isolated vertices.
    Let $L'$ be a list assignment of $G'$ such that $L'(v_i)=L(v_i)\setminus\{\alpha\}$ for each $i\in \{1,\dots, k\}$ and $L'(x)=L(x)$ for every vertex $x \in V(G')\setminus\{v_1,\ldots, v_k\}$. 
    Then we have $|L'(v)|\geq d_{G'}(v)+2$ for every vertex $v\in V(G')$.
    In particular, if a vertex $v\in V(G')$ is adjacent to at least two vertices of $\{u_1,u_2,\dots ,u_k\}$, then we have $|L'(v)|\geq |L(v)|-1\geq d_G(v)+2-1\geq d_{G'}(v)+3$.
    We say a component $H$ of $G'$ is \emph{bad} if $H$ is isomorphic to $C_5$ and $|L'(v)|=4$ for every vertex $v\in V(H)$.
    Note that every vertex of a bad component $H$ is adjacent to at most one vertex in $\{u_1,u_2,\dots ,u_k\}$, and hence each vertex of $H$ has degree 2 or 3 in $G$.
    
    Now we define a proper $L'$-coloring $\phi$ and a color $c_i$ for each $i\in \{1,2,\dots ,k\}$ as follows.
    \begin{enumerate}[label=(\alph*)]
        \item If $G'$ is proper conflict-free $L'$-colorable, then let $\phi$ be a proper conflict-free $L'$-coloring of $G'$ and let $c_i$ be a color in $\mathcal{U}_{\phi}(v_i,G')$ for each $i\in \{1,2,\dots ,k\}$.
        \item Otherwise, $G'$ has a bad component.
        For each bad component $H$ of $G'$, we let $\phi$ be a proper $L'$-coloring of $H$ such that at most one vertex $v_H$ satisfies $\mathcal{U}_{\phi}(v_H,G')=\emptyset$.
        For other components, let $\phi$ be a proper conflict-free $L'$-coloring.

        For each $i\in \{1,2,\dots ,k\}$, if $\mathcal{U}_{\phi}(v_i,G')\neq \emptyset$, then let $c_i$ be a color in $\mathcal{U}_{\phi}(v_i,G')$.
        If $\mathcal{U}_{\phi}(v_i,G')=\emptyset$, then $v_i=v_H$ for some bad component $H$ and $d_{G'}(v_H)=2$. Thus, let $c_i$ be the unique color that appears in $N_{G'}(v_i)$ by $\phi$.
    \end{enumerate}

	We extend $\phi$ to a proper conflict-free $L$-coloring of $G$. Let $\phi(u_0)=\alpha$, and we choose $\phi(u_i)\in L(u_i)\setminus\{\phi(v_i), c_i, \alpha\}$ arbitrarily for each $u_i \in U_{\alpha}$. We define a color $\beta$ as follows: If $U_{\alpha}=\emptyset$, then let $\beta$ be a color in $L(u_1)\setminus\{\phi(v_1), c_1, \alpha\}$ and set $\phi(u_1)=\beta$. If $U_{\alpha}\neq\emptyset$ and $|\phi(U_{\alpha})|=1$, then since $\alpha\notin L(u_1)$, let $\beta$ be a color in $L(u_1)\setminus(\{\phi(v_1), c_1, \alpha\}\cup \phi(U_{\alpha}))$. Otherwise, i.e. $U_{\alpha}\neq\emptyset$ and $|\phi(U_{\alpha})|\geq 2$, the fact $|U_{\alpha}|\leq 3$ implies that some color appears exactly once at $U_{\alpha}$, so let $\beta$ be the color. 
    Finally, for each uncolored vertex $u_j$, since $u_j \notin U_{\alpha}$, we can choose a color $\phi(u_j)\in L(u_j)\setminus\{\phi(v_j), c_j, \alpha, \beta\}$. 
    
    It is easy to verify that $\phi$ is a proper $L$-coloring of $G$, and that $\mathcal{U}_{\phi}(w,G)=\mathcal{U}_{\phi}(w, G') \neq \emptyset$ for each $w\in V(G')\setminus\{v_1, \ldots, v_k\}$. 
    Since $\phi(u_i)\neq c_i$, we have $c_i\in \mathcal{U}_{\phi}(v_i, G)$ when $c_i\in \mathcal{U}_{\phi}(v_i,G')$, and we have $\phi(u_i)\in \mathcal{U}_{\phi}(v_i,G)$ when $\mathcal{U}_{\phi}(v_i,G')=\emptyset$.
    By the definition of $L'$, we have $\phi(v_i) \neq \alpha = \phi(u_0)$, so $\alpha \in \mathcal{U}_{\phi}(u_i, G)$ for each $i \in \{1,\ldots, k\}$. Furthermore, by the choice of colors $\{\phi(u_i)\mid 1\leq i\leq k\}$, we have $\beta\in \mathcal{U}_{\phi}(u_0, G)$. Hence $\phi$ is a proper conflict-free $L$-coloring of $G$, a contradiction.
\end{proof}

\begin{claim}\label{claim:3thread 1}
	For any $k\geq 4$, every $k$-vertex is incident with at most $k-3$ 3-threads.
\end{claim}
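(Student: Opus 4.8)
The plan is a reduce-and-extend argument in the spirit of the proofs of Claims~\ref{claim: number of 2-neighbors} and~\ref{claim:3plus neighbor}. Suppose, for a contradiction, that some $k$-vertex $u$ with $k\ge 4$ is incident with $k-2$ distinct $3$-threads. Denote the $i$-th thread by $ua_ib_ic_i$ (so $d_G(a_i)=d_G(b_i)=d_G(c_i)=2$), let $w_i$ be the neighbor of $c_i$ other than $b_i$, and let $p,q$ be the two neighbors of $u$ lying on no thread. Using $\delta(G)\ge 2$, Claim~\ref{claim:long thread} and Claim~\ref{claim:pendant cycle}, one first checks the elementary facts that all the vertices $a_i,b_i,c_i$ (for $1\le i\le k-2$) are pairwise distinct, that $w_i\ne u$, that $d_G(w_i)\ge 3$ (otherwise $a_ib_ic_iw_i$ would be a $4$-thread), and that none of $p,q,w_1,\dots,w_{k-2}$ coincides with a thread vertex. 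Set $G'=G-\bigl(\{u\}\cup\{a_i,b_i,c_i : 1\le i\le k-2\}\bigr)$; the facts just listed guarantee that $p,q,w_1,\dots,w_{k-2}\in V(G')$.

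Next I would color $G'$. Every component $H$ of $G'$ is proper conflict-free $L$-colorable: if $H\not\cong C_5$ this follows from the minimality of $G$, and if $H\cong C_5$ then $H$ contains a vertex of $G$-degree at least $3$ (otherwise $H$ would be a connected component of $G$, contradicting that $G$ is connected and not a cycle), so the restriction of $L$ to $H$ gives some vertex a list of size at least $5$ and Proposition~\ref{prop:c5} applies. Hence $G'$ admits a proper conflict-free $L$-coloring $\phi$. For each $x\in\{p,q\}$ that is non-isolated in $G'$, fix a witness color $c_x\in\mathcal U_\phi(x,G')$, and likewise fix $c_{w_i}\in\mathcal U_\phi(w_i,G')$ whenever $w_i$ is non-isolated in $G'$. (The few external vertices that happen to be isolated in $G'$, together with coincidences such as $w_i\in\{p,q\}$ or $w_i=w_j$, are handled directly, since then that vertex has at most three neighbors in $G$ and all of them are colored in the extension.)

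Finally I extend $\phi$ to $G$. First pick $\phi(u)\in L(u)\setminus\{\phi(p),\phi(q),c_p,c_q\}$, which is possible because $|L(u)|\ge k+2\ge 6$, and which keeps $p$ and $q$ conflict-free in $G$. Then, for each thread $i$, color in the order $b_i,c_i,a_i$: choose $\phi(b_i)\in L(b_i)\setminus\{\phi(u),\phi(w_i)\}$ (at least two options), then $\phi(c_i)\in L(c_i)\setminus\{\phi(b_i),\phi(w_i),c_{w_i}\}$ (this keeps $w_i$ and $c_i$ conflict-free, using $\phi(b_i)\ne\phi(w_i)$), and finally $\phi(a_i)\in L(a_i)\setminus\bigl(\{\phi(u),\phi(b_i),\phi(c_i)\}\cup\{z_i\}\bigr)$, where the extra forbidden color $z_i$ is prescribed by the dichotomy used in the proofs of Claims~\ref{claim: number of 2-neighbors} and~\ref{claim:3plus neighbor}: if $\phi(p)\ne\phi(q)$ take $z_i=\phi(q)$ for all $i$, so that $\phi(q)$ occurs exactly once in $N_G(u)$; if $\phi(p)=\phi(q)$ take $z_1=\phi(p)$ and $z_i=\phi(a_1)$ for $i\ge 2$, so that $\phi(a_1)$ occurs exactly once in $N_G(u)$. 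In either case $u$ is conflict-free, $a_i$ is conflict-free because $\phi(u)\ne\phi(b_i)$, $b_i$ because $\phi(a_i)\ne\phi(c_i)$, $c_i$ and $w_i$ by construction, and the coloring is proper; this contradicts the choice of $G$.

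The delicate point --- and the main obstacle --- is the last color choice: each $a_i$ is a degree-$2$ vertex, so after deleting redundant colors we may have $|L(a_i)|=4$, while $\{\phi(u),\phi(b_i),\phi(c_i),z_i\}$ may have four elements. Closing this gap is where the real work lies: one should exploit the slack in the earlier choices of $\phi(b_i)$ and $\phi(c_i)$ (for instance choosing one of them outside $L(a_i)$ whenever possible), and, in the residual configurations where no such choice is available for two threads simultaneously, argue as in the proof of Claim~\ref{claim:3plus neighbor} by locating a color of $L(u)$ that lies in few of the lists $L(a_i)$ and using it for $\phi(u)$, which forces $\phi(u)\notin L(a_i)$ for most $i$ and frees a color in the corresponding lists. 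A routine but slightly tedious amount of further case analysis is then needed for the external vertices that are isolated in $G'$ and for the coincidences among $p,q,w_1,\dots,w_{k-2}$.
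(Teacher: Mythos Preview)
Your reduction and overall framework match the paper's, but as you yourself flag, the argument is left unfinished at the crucial step: with your ordering $b_i,c_i,a_i$, the vertex $a_i$ may face four forbidden colors $\{\phi(u),\phi(b_i),\phi(c_i),z_i\}$ against a list of size~$4$. Neither of your proposed rescues closes this. After your constraints, $\phi(c_i)$ has a single option and $\phi(b_i)$ at most two, and all of these may lie in $L(a_i)$, so ``choose $\phi(b_i)$ or $\phi(c_i)$ outside $L(a_i)$'' need not be possible. The pigeonhole on $L(u)$ also gives nothing: once $\{\phi(p),\phi(q),c_p,c_q\}$ is removed you have only $k-2$ candidates for $\phi(u)$, and the average of $|\{i:\alpha\in L(a_i)\}|$ over those candidates can be~$4$. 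Your treatment of isolated vertices in $G'$ is also not right: such a vertex has all its neighbors in $\{u,c_1,\dots,c_{k-2}\}$, hence up to $k-1$ of them, not ``at most three''; the paper handles this by fixing at the outset a $k$-vertex of \emph{minimum} $k$ among all offenders, which forces any isolated vertex of $G'$ to be a smaller offender and yields a contradiction.

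The paper's clean resolution of the main obstacle is to reverse the order on each thread and decouple the constraints. Shrink $L^*(c_i)=L(c_i)\setminus\{\phi(w_i),c_{w_i}\}$ to size~$2$, and set $C(a_i)$ to be any two colors in $L(a_i)\setminus L^*(c_i)$ (possible since $|L(a_i)|=4$). Now color $u$ and all the $a_i$ \emph{first}, insisting $\phi(a_i)\in C(a_i)$; the entire case analysis for $\mathcal U_\phi(u,G)\neq\emptyset$ (the dichotomy $\phi(p)=\phi(q)$ versus $\phi(p)\neq\phi(q)$, and in the latter case a further split on how many $C(a_i)$ contain a fixed $\alpha\in L^*(u)$) is run using these $2$-element sets. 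Only afterwards are $b_i,c_i$ colored, via $\phi(b_i)\in L^*(b_i)\setminus\{\phi(u),\phi(a_i)\}$ and $\phi(c_i)\in L^*(c_i)\setminus\{\phi(b_i)\}$; since $\phi(a_i)\notin L^*(c_i)$ we get $\phi(c_i)\neq\phi(a_i)$ for free, so $b_i$ is conflict-free with no extra constraint. This precommitment device --- choosing $a_i$ outside the \emph{small} list $L^*(c_i)$ rather than trying to push $b_i$ or $c_i$ outside the \emph{large} list $L(a_i)$ --- is the missing idea in your outline.
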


\begin{proof}
	Let $u_0$ be a $k$-vertex with neighbors $u_1,\dots ,u_k$ for some $k\geq 4$.
	Suppose that each of $u_1, \ldots, u_{k-2}$ is contained in a 3-thread of $G$.
    We fix such a vertex $u_0$ so that the degree $k$ is as small as possible.
	For each $i\in\{1,\dots ,k-2\}$, let $u_iv_iw_i$ be a 3-thread containing $u_i$ and let $x_i$ be the neighbor of $w_i$ distinct from $v_i$.
    Note that $\{u_i,v_i,w_i,x_i\}\cap \{u_j,v_j,w_j\}=\emptyset$ for any distinct $i$ and $j$ by Claim~\ref{claim:pendant cycle}.
	Let $G'=G-\{u_0\}\cup \bigcup_{i=1}^{k-2}\{u_i,v_i,w_i\}$.
    If $G'$ has an isolated vertex $v$, then it must be that $N_G(v)\subseteq \{u_0,w_1,w_2,\dots ,w_{k-2}\}$, and hence $v$ is a vertex of degree at most $k-1$ that is incident with at least $d_G(v)-1$ 3-threads, a contradiction by the minimality of $k$.

    Thus, $G'$ has no isolated vertices. 
    By Proposition~\ref{prop:c5} and the minimality of $G$, $G'$ admits a proper conflict-free $L$-coloring $\phi$.
	For each $v\in \{u_{k-1}, u_k, x_1,\dots ,x_{k-2}\}$, let $c_v$ be a color in $\mathcal{U}_\phi(v,G')$.
	We define a list assignment $L^*$ of remaining vertices as follows:
	\begin{itemize}
		\item Let $L^*(u_0)=L(u_0)\setminus \{\phi(u_{k-1}), \phi(u_k), c_{u_{k-1}}, c_{u_k}\}$.
		\item For each $i\in \{1,\dots ,k-2\}$, let $L^*(u_i)=L(u_i)$, $L^*(v_i)=L(v_i)\setminus\{\phi(x_i)\}$, and $L^*(w_i)=L(w_i)\setminus \{\phi(x_i),c_{x_i}\}$.
	\end{itemize}
	By deleting extra colors, we may assume that $|L^*(u_i)|=4$, $|L^*(v_i)|=3$, and $|L^*(w_i)|=2$ for each $i\in \{1,\dots ,k-2\}$.
	For each $i\in \{1,\dots ,k-2\}$, let $C(u_i)$ be a set of two colors in $L^*(u_i)\setminus L^*(w_i)$.
	We shall show that we can extend $\phi$ to $\{u_0,u_1,\dots ,u_{k-2}\}$ properly so that $\mathcal{U}_{\phi}(u_0,G)\neq \emptyset$ and $\phi(u_i)\in C(u_i)$ for each $i\in \{1,\dots ,k-2\}$.
	
	\medskip
	\noindent
	\textit{Case 1.} $\phi(u_{k-1})=\phi(u_k)$.
	\medskip
	
	Let $\alpha=\phi(u_{k-1})=\phi(u_k)$.
	We first choose $\phi(u_1)\in C(u_1)\setminus \{\alpha\}$, and for each $i\in \{2,\dots ,k-2\}$, we choose $\phi(u_i)\in C(u_i)\setminus\{\phi(u_1)\}$. 
	By the assumption of this case, we have $|L^*(u_0)|\geq k+2-3=k-1$, and hence we can choose $\phi(u_0)\in L^*(u_0)\setminus \{\phi(u_1),\dots ,\phi(u_{k-2})\}$, which results in a desired coloring of $\{u_0,u_1,\dots ,u_{k-2}\}$.
	
	\medskip
	\noindent
	\textit{Case 2.} $\phi(u_{k-1})\neq \phi(u_k)$.
	\medskip
	
	We fix a color $\alpha\in L^*(u_0)$, and let $U_\alpha:=\{u_i\mid 1\leq i\leq k-2,\; \alpha\in C(u_i)\}$.
	
	If $|U_\alpha|\geq 2$, then we let $\phi(u_i)=\alpha$ for every $u_i\in U_\alpha$, and choose $\phi(u_i)\in C(u_i)\setminus \{\phi(u_k)\}$ for each $u_i\in \{u_1,\dots ,u_{k-2}\}\setminus U_\alpha$.
	Since $|L^*(u_0)|\geq k-2$ and $|\{\phi(u_1),\dots ,\phi(u_{k-2})\}|\leq k-2-(|U_\alpha|-1)\leq k-3$, we can choose $\phi(u_0)\in L^*(u_0)\setminus \{\phi(u_1),\dots ,\phi(u_{k-2})\}$.
	As $\phi(u_{k-1})\neq \phi(u_k)$ and $\alpha\neq \phi(u_k)$, we have $\phi(u_k)\in\mathcal{U}_\phi(u_0,G)$, and hence this is a desired coloring.
	
	Thus, we assume that $|U_\alpha|\leq 1$.
	Then we let $\phi(u_0)=\alpha$ and choose $\phi(u_i)\in C(u_i)\setminus \{\alpha\}$ for each $u_i\in U_\alpha$ if $U_\alpha\neq \emptyset$.
	Since $\phi(u_{k-1})\neq \phi(u_k)$ and $|U_\alpha|\leq 1$, at least one of $\phi(u_{k-1})$ and $\phi(u_k)$, say $\phi(u_k)$, appears exactly once around $u_0$.
	For each $u_i\in \{u_1,\dots ,u_{k-2}\}\setminus U_\alpha$, we choose $\phi(u_i)\in C(u_i)\setminus \{\phi(u_k),\alpha\}$.
	Since $\phi(u_k)\in \mathcal{U}_\phi(u_0,G)$, this is a desired coloring.
	
	\medskip
	Now we color remaining vertices $\bigcup_{i=1}^{k-2}\{v_i,w_i\}$.
	For each $i\in \{1,\dots ,k-2\}$, we first choose $\phi(v_i)\in L^*(v_i)\setminus \{\phi(u_0),\phi(u_i)\}$ and then choose $\phi(w_i)\in L^*(w_i)\setminus \{\phi(v_i)\}$.
	Since $\phi(u_i)\notin L^*(w_i)$, we have $\mathcal{U}_\phi(v_i,G)\neq \emptyset$, and hence $\phi$ is a proper conflict-free $L$-coloring of $G$, a contradiction.
\end{proof}

\begin{claim}\label{claim:3thread 2}
	For any $k\geq 4$, if a $k$-vertex is adjacent to exactly $k-1$ 2-neighbors, then at most $k-4$ of them are contained in 3-threads of $G$.
\end{claim}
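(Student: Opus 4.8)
The plan is to argue from the minimality of the counterexample $G$, following the template of Claims~\ref{claim:3plus neighbor} and \ref{claim:3thread 1}. Assume the statement fails: there is a $k$-vertex $u_0$ with $k\ge 4$, neighbours $u_1,\dots ,u_k$ in which $u_1,\dots ,u_{k-1}$ have degree $2$ and $u_k$ is a $3^+$-vertex, and with at least $k-3$ of $u_1,\dots ,u_{k-1}$ lying on $3$-threads; by Claim~\ref{claim:3thread 1} exactly $k-3$ of them do, say $u_1,\dots ,u_{k-3}$, and we fix a $3$-thread $u_iv_iw_i$ through each and let $x_i$ be the neighbour of $w_i$ off the thread. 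Let $y_{k-2},y_{k-1}$ be the neighbours of $u_{k-2},u_{k-1}$ other than $u_0$. We choose such a configuration with $k$ minimum. By Claims~\ref{claim:pendant cycle} and \ref{claim:long thread}, the interior vertices $u_i,v_i,w_i$ $(1\le i\le k-3)$ are pairwise distinct and none of $u_{k-2},u_{k-1},u_k,y_{k-2},y_{k-1},x_1,\dots ,x_{k-3}$ equals any of them, though the $x$'s and $y$'s may coincide with one another or with $u_k$.

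Set $G'=G-\bigl(\{u_0,u_{k-2},u_{k-1}\}\cup\bigcup_{i=1}^{k-3}\{u_i,v_i,w_i\}\bigr)$. I would first rule out isolated vertices of $G'$: any such vertex $v$ satisfies $N_G(v)\subseteq\{u_0,u_{k-2},u_{k-1},w_1,\dots ,w_{k-3}\}$, so $v=u_k$ or $v\in\{y_{k-2},y_{k-1},x_1,\dots ,x_{k-3}\}$, and tracing through these possibilities one finds that $v$ is then a vertex of degree less than $k$ that is incident with a $2$-thread or with too many $3$-threads — contradicting Claim~\ref{claim:3vertex 2thread}, Claim~\ref{claim:3thread 1}, or the minimality of $k$ — unless $G$ is the generalized theta graph $\Theta_{1,2\star 2,3\star (k-3)}$, which is excluded by Proposition~\ref{prop:theta graph}. (The sole remaining case, $v=u_k$ with $d_G(u_k)=3$ and $N_G(u_k)=\{u_0,u_{k-2},u_{k-1}\}$, is dealt with by deleting $u_k$ as well and colouring $u_0,u_{k-2},u_{k-1},u_k$ last, where the lists leave ample room.) Since $G$ is connected and $G\ne C_5$, any $C_5$-component of $G'$ contains a vertex $x$ with $|L(x)|\ge d_G(x)+2\ge d_{G'}(x)+3=5$, so Proposition~\ref{prop:c5} gives a proper conflict-free $L$-colouring $\phi$ of $G'$.

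To extend $\phi$, pick $c_w\in\mathcal{U}_{\phi}(w,G')$ for each $w\in\{u_k,x_1,\dots ,x_{k-3},y_{k-2},y_{k-1}\}$ and, exactly as in Claim~\ref{claim:3thread 1}, set $L^*(v_i)=L(v_i)\setminus\{\phi(x_i)\}$, $L^*(w_i)=L(w_i)\setminus\{\phi(x_i),c_{x_i}\}$, and fix a $2$-element set $C(u_i)\subseteq L(u_i)\setminus L^*(w_i)$; then any proper extension to $u_0$ and the $u_i$ with $\phi(u_i)\in C(u_i)$ extends further to $v_i,w_i$ keeping $v_i,w_i,x_i,u_i$ conflict-free. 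It remains to colour $u_0$, the $u_i$ $(i\le k-3)$ from their $C(u_i)$, and the lone $2$-vertices $u_{k-2},u_{k-1}$ so that $\phi$ is proper, $\phi(u_0)\notin\{\phi(y_{k-2}),\phi(y_{k-1})\}$ and $\phi(u_{k-j})\ne c_{y_{k-j}}$ (so $u_{k-2},u_{k-1},y_{k-2},y_{k-1}$ stay conflict-free), $\phi(u_0)\ne c_{u_k}$ (so $u_k$ stays conflict-free), and some colour occurs exactly once in $\{\phi(u_1),\dots ,\phi(u_k)\}$ (so $u_0$ is conflict-free). Since $|L(u_0)|\ge k+2$ and $u_{k-2},u_{k-1}$ are coloured after $u_0$, the colour $\phi(u_0)$ need avoid only the $\le k-3$ thread-colours and the four colours $\phi(u_k),c_{u_k},\phi(y_{k-2}),\phi(y_{k-1})$; choosing $\phi(u_0)$ so that it lies in at most one of the sets $C(u_i)$ (a counting argument as in Case 2 of the proof of Claim~\ref{claim:3thread 1}) and then running the dichotomy "some colour of $\{u_{k-2},u_{k-1},u_k\}$ occurs exactly once, or they are all equal and we use a free thread-vertex as a reservoir" should secure the unique colour at $u_0$.

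The main obstacle is the bookkeeping at the two lone $2$-vertices: each of $u_{k-2},u_{k-1}$ has a list of size only $4$, which is already consumed by the forced exclusions $\phi(u_0),\phi(y_{k-j}),c_{y_{k-j}}$, so there is essentially no slack left to steer its colour away from the colour we want to make unique at $u_0$. Reconciling this with the fact that each $C(u_i)$ has size exactly $2$ (and its two members are not free to choose) forces a case analysis noticeably more delicate than the one in Claim~\ref{claim:3thread 1}; the small values of $k$, and configurations in which every admissible value of $\phi(u_0)$ lies in some $C(u_i)$, are the places that need the most care. Beyond that, only routine care is needed to absorb the coincidences $y_{k-2}=y_{k-1}$, $y_{k-j}=u_k$, $x_i=u_k$, and the degenerate situation where $G$ collapses to a theta graph.
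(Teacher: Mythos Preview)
Your reduction matches the paper's: same deleted set, same witness colours $c_w$, same restricted lists on the thread vertices, same $2$-sets $C(u_i)$ for $i\le k-3$. One minor slip: the theta graph arising in the isolated-vertex case is $\Theta_{1,2\star 2,3\star 0,4\star(k-3)}$, not $\Theta_{1,2\star 2,3\star(k-3)}$, since each $3$-thread $u_iv_iw_i$ contributes an $x$--$y$ path with four edges; Proposition~\ref{prop:theta graph} still applies with $(r,s,t)=(2,0,k-3)$.

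The substantive gap is in your ``main obstacle''. You treat $u_{k-2},u_{k-1}$ as special, note that after excluding $\phi(u_0),\phi(y_{k-j}),c_{y_{k-j}}$ only one colour remains, and conclude that a ``noticeably more delicate'' case analysis is forced. The paper avoids this entirely by \emph{not} fixing $\phi(u_0)$ before touching $u_{k-2},u_{k-1}$. Instead it sets
\[
C(u_{k-j}):=L(u_{k-j})\setminus\{\phi(y_{k-j}),c_{y_{k-j}}\}\qquad(j\in\{1,2\}),
\]
a $2$-element set, and thereby puts all of $u_1,\dots ,u_{k-1}$ on equal footing. With $L^*(u_0)=L(u_0)\setminus\{\phi(u_k),c_{u_k},\phi(y_{k-2}),\phi(y_{k-1})\}$ (size $\ge k-2$), fix any $\alpha\in L^*(u_0)$ and let $U_\alpha=\{u_i:1\le i\le k-1,\ \alpha\in C(u_i)\}$. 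If $|U_\alpha|\ge 3$, colour every $u_i\in U_\alpha$ with $\alpha$ and the rest from $C(u_i)\setminus\{\phi(u_k)\}$; then $\phi(u_k)$ is unique at $u_0$, at most $k-3$ colours are used on $u_1,\dots ,u_{k-1}$, and $|L^*(u_0)|\ge k-2$ still leaves a choice for $\phi(u_0)$. If $|U_\alpha|\le 2$, set $\phi(u_0)=\alpha$, colour each $u_i\in U_\alpha$ from $C(u_i)\setminus\{\alpha\}$, and run the ``all equal / one colour unique'' dichotomy on the at most three coloured neighbours $\{u_k\}\cup U_\alpha$, exactly as in Claim~\ref{claim:3thread 1} --- picking a free $u_{i_0}\notin U_\alpha$ in the former case, and avoiding the unique colour $\gamma$ in the latter. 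This is no harder than Claim~\ref{claim:3thread 1}; only the threshold shifts from $2$ to $3$. No counting argument on $\alpha$ is needed, and the obstacle you anticipate disappears once $u_{k-2},u_{k-1}$ are allowed to participate in $U_\alpha$ through their own $2$-sets rather than being coloured after $\phi(u_0)$ is frozen.
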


\begin{proof}
	Let $u_0$ be a $k$-vertex with neighbors $u_1,\dots ,u_k$ for some $k\geq 4$.
	Suppose that $u_1,\dots , u_{k-1}$ are 2-vertices and that each of $u_1, \dots , u_{k-3}$ is contained in a 3-thread of $G$.
    We take such a vertex $u_0$ so that the degree $k$ is as small as possible.
	For each $i\in\{1,\dots ,k-3\}$, let $u_iv_iw_i$ be a 3-thread containing $w_i$, and let $x_i$ be the neighbor of $w_i$ distinct from $v_i$.
	Let $v_i$ be the neighbor of $u_i$ other than $u_0$ for $i\in \{k-2,k-1\}$.
    By Claim~\ref{claim:pendant cycle}, $\{u_i,v_i,w_i,x_i\}\cap \{u_j,v_j,w_j\}=\emptyset$ for any distinct $i$ and $j$.
	Let $G'=G-\{u_0,u_{k-2},u_{k-1}\}\cup \bigcup_{i=1}^{k-3}\{u_i,v_i,w_i\}$.
    If $G'$ has an isolated vertex $v$, then it must be that $N_G(v)\subseteq \{u_k, v_{k-2}, v_{k-1}\}\cup \{w_1,\dots ,w_{k-3}\}$.
    By Claims~\ref{claim:3plus neighbor}, \ref{claim:3thread 1} and the minimality of $k$, we conclude that $N_G(v)=\{u_k, v_{k-2}, v_{k-1}\}\cup \{w_1,\dots ,w_{k-3}\}$.
    This implies that $G$ is isomorphic to $\Theta_{1,2\star 2, 3\star 0, 4\star (k-3)}$, and hence $G$ is proper conflict-free $L$-colorable by Proposition~\ref{prop:theta graph}, a contradiction.
    
    Thus $G'$ has no isolated vertices.
    By Proposition~\ref{prop:c5} and the minimality of $G$, $G'$ admits a proper conflict-free $L$-coloring $\phi$.
	For each $v\in \{u_k,v_{k-2},v_{k-1}\}\cup \{x_1,\dots ,x_{k-3}\}$, let $c_v$ be a color in $\mathcal{U}_\phi(v,G')$.
	We define a list assignment $L^*$ of remaining vertices as follows:
	\begin{itemize}
		\item Let $L^*(u_0)=L(u_0)\setminus \{\phi(u_k), \phi(v_{k-2}), \phi(v_{k-1}), c_{u_k}\}$.
		\item For each $i\in \{1,\dots ,k-3\}$, let $L^*(u_i)=L(u_i)$, $L^*(v_i)=L(v_i)\setminus\{\phi(x_i)\}$, and $L^*(w_i)=L(w_i)\setminus \{\phi(x_i),c_{x_i}\}$.
		\item For each $i\in \{k-2,k-1\}$, let $L^*(u_i)=L(u_i)\setminus \{\phi(v_i),c_{v_i}\}$.
	\end{itemize}
	By deleting extra colors, we may assume that $|L^*(u_i)|=4$, $|L^*(v_i)|=3$, $|L^*(w_i)|=2$ for each $i\in \{1,\dots ,k-3\}$, and $|L^*(u_{k-2})|=|L^*(u_{k-1})|=2$.
	For each $i\in \{1,\dots ,k-1\}$, let $C(u_i)$ be a set of two colors in $L^*(u_i)\setminus L^*(w_i)$ if $i\leq k-3$ and let $C(u_i)=L^*(u_i)$ if $i\in \{k-2,k-1\}$.
	As we did in the proof of Claim~\ref{claim:3thread 1}, we shall show that we can extend $\phi$ to $\{u_0,u_1,\dots ,u_{k-1}\}$ properly so that $\mathcal{U}_{\phi}(u_0,G)\neq \emptyset$ and $\phi(u_i)\in C(u_i)$ for each $i\in \{1,\dots ,k-1\}$.
	
	We fix a color $\alpha\in L^*(u_0)$ and let $U_\alpha:=\{u_i\mid 1\leq i\leq k-1,\; \alpha\in C(u_i)\}$.
    Note that $\alpha\neq \phi(u_k)$ by the definition of $L^*(u_0)$.
	
	If $|U_\alpha|\geq 3$, then we let $\phi(u_i)=\alpha$ for every $u_i\in U_\alpha$, and choose $\phi(u_i)\in C(u_i)\setminus \{\phi(u_k)\}$ for each $u_i\in \{u_1,\dots ,u_{k-1}\}\setminus U_\alpha$.
	Since $|L^*(u_0)|\geq k-2$ and $|\{\phi(u_1),\dots ,\phi(u_{k-1})\}|\leq k-1-(|U_\alpha|-1)\leq k-3$, we can choose $\phi(u_0)\in L^*(u_0)\setminus \{\phi(u_1),\dots ,\phi(u_{k-1})\}$.
	Obviously we have $\phi(u_k)\in\mathcal{U}_\phi(u_0,G)$, and hence this is a desired coloring.
	
	Thus, we assume that $|U_\alpha|\leq 2$.
	Then we let $\phi(u_0)=\alpha$ and choose $\phi(u_i)\in C(u_i)\setminus \{\alpha\}$ for each $u_i\in U_\alpha$ if $U_\alpha\neq \emptyset$.
	Since $|\{u_k\}\cup U_\alpha|\leq 3$, either all vertices of $\{u_k\}\cup U_\alpha$ is colored in the same color $\beta$, or there exists a color $\gamma$ that appears exactly once in $\{u_k\}\cup U_\alpha$.
	
	In the former case, we choose $\phi(u_{i_0})\in C(u_{i_0})\setminus \{\beta\}$ for some $u_{i_0}\notin U_\alpha$, and let $\phi(u_i)\in C(u_i)\setminus \{\phi(u_{i_0})\}$ for each $u_i\in \{u_1,\dots ,u_{k-1}\}\setminus (U_\alpha\cup \{u_{i_0}\})$. 
	Since $\phi(u_{i_0})\in \mathcal{U}_\phi(u_0,G)$, this is a desired coloring.
	In the latter case, for each $u_i\in \{u_1,\dots ,u_{k-1}\}$, we choose $\phi(u_i)\in C(u_i)\setminus\{\gamma\}$.
	Then, we have $\gamma\in \mathcal{U}_\phi(u_0,G)$, and hence this is a desired coloring again.
	
	By a similar argument as in the proof of Claim~\ref{claim:3thread 2}, we can color the remaining vertices $\bigcup_{i=1}^{k-3}\{v_i,w_i\}$ to obtain a proper conflict-free $L$-coloring of $G$, a contradiction.
\end{proof}

\begin{claim}\label{claim:3vertex bad neighbor}
	Let $u_0$ be a 3-vertex with $N_G(u_0)=\{u_1,u_2,u_3\}$.
	If both $u_1$ and $u_2$ are 2-vertices, then each of them has a $4^+$-neighbor.
\end{claim}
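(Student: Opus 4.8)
The plan is to argue by contradiction from the minimality of $G$. Suppose the claim fails; by symmetry we may assume that $u_1$ has no $4^+$-neighbor, and let $v_1$ (resp.\ $v_2$) be the neighbor of $u_1$ (resp.\ $u_2$) distinct from $u_0$. Since $u_0$ is a $3$-vertex incident with the path $u_1v_1$, Claim~\ref{claim:3vertex 2thread} rules out $d_G(v_1)=2$, hence $d_G(v_1)=3$; write $z_1,z_2$ for the neighbors of $v_1$ other than $u_1$. By Claim~\ref{claim:3plus neighbor}, $u_0$ has a $3^+$-neighbor, which can only be $u_3$, so $d_G(u_3)\geq 3$. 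Before the main argument I would record which of $u_3,v_2,z_1,z_2$ may coincide (only $v_1=v_2$, $v_1=u_3$, and coincidences inside $\{u_3,v_2,z_1,z_2\}$ remain possible, while e.g.\ $v_1=u_2$ is excluded by Claim~\ref{claim:pendant cycle}), treating the generic case in detail and the coincidences by the same method with the relevant forbidden sets shrunk accordingly.

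Next I would delete the configuration: set $G'=G-\{u_0,u_1,u_2,v_1\}$. If $G'$ has an isolated vertex $w$, then $N_G(w)\subseteq\{u_3,v_2,z_1,z_2\}$, which together with the minimality of $G$ and Claims~\ref{claim:3plus neighbor}, \ref{claim:3thread 1}, \ref{claim:3thread 2} pins $G$ down to a bounded graph; this graph is proper conflict-free $({\rm degree}+2)$-choosable by Propositions~\ref{prop:complete bip} and~\ref{prop:theta graph}, the fact (noted at the start of this section) that cycles of length $\neq 5$ are proper conflict-free $4$-choosable, or a direct check, a contradiction. Otherwise $G'$ has no isolated vertex, and since $G$ is connected every component of $G'$ contains a vertex $x$ adjacent in $G$ to a deleted vertex, hence with $|L(x)|\geq d_G(x)+2\geq d_{G'}(x)+3$; so by Proposition~\ref{prop:c5} no component of $G'$ is a ``bad'' $5$-cycle, and the minimality of $G$ gives a proper conflict-free $L$-coloring $\phi$ of $G'$. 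For each $w\in\{u_3,v_2,z_1,z_2\}$ I fix $c_w\in\mathcal{U}_{\phi}(w,G')$.

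The heart of the proof is extending $\phi$ to $u_0,u_1,u_2,v_1$, and the conflict-free conditions to track are: $d_G(u_0)=3$, so $u_0$ is conflict-free once $\phi(u_1),\phi(u_2),\phi(u_3)$ are not all equal; $d_G(u_1)=d_G(u_2)=2$, so $u_1$ (resp.\ $u_2$) is conflict-free precisely when $\phi(u_0)\neq\phi(v_1)$ (resp.\ $\phi(u_0)\neq\phi(v_2)$); $d_G(v_1)=3$, so $v_1$ is conflict-free once $\phi(u_1),\phi(z_1),\phi(z_2)$ are not all equal; and each of $u_3,v_2,z_1,z_2$ keeps a conflict-free color as long as its re-added neighbor ($u_0$ for $u_3$, $u_2$ for $v_2$, and $v_1$ for $z_1$ and $z_2$) avoids its witness $c_w$. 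Choosing $\phi(v_1)$ first one sees it has room ($|L(v_1)|=5$ against the at most four colors $\phi(z_1),\phi(z_2),c_{z_1},c_{z_2}$), and each of $u_1,u_2$ has room in its size-$4$ list; the bottleneck is that $\phi(u_0)$ must a priori avoid the six colors $\phi(u_1),\phi(u_2),\phi(u_3),c_{u_3},\phi(v_1),\phi(v_2)$ while $|L(u_0)|=5$. The task is to spend the available freedom --- the size-$4$ lists of $u_1,u_2$ and the free choice of $\phi(v_1)$ --- so that this six-element set collapses to at most five. I would split on $d_G(u_3)$: if $d_G(u_3)=3$ then $u_3$ has exactly two neighbors in $G'$, which receive distinct colors (as $\phi$ is conflict-free there), so $u_3$ is automatically conflict-free in $G$ whatever $\phi(u_0)$ is and the $c_{u_3}$ constraint simply drops, already leaving five; if $d_G(u_3)\geq 4$ one instead forces a coincidence by choosing, e.g., $\phi(v_1)$ equal to $\phi(u_3)$ or $\phi(v_2)$, or $\phi(u_1)=\phi(u_2)$, or some $\phi(u_i)$ equal to a fixed color, according to how $L(u_1),L(u_2),L(v_1)$ meet $\phi(u_3),c_{u_3},\phi(v_2)$; I would also treat $v_1=v_2$ separately, where the two $2$-vertex constraints merge into one. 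Using in addition the already-forced inequality $\phi(u_1)\neq\phi(v_1)$, in every branch one brings the forbidden set for $\phi(u_0)$ down to size at most four, colors $u_0$, then colors $u_1,u_2$ properly while avoiding $\phi(u_0)$, $\phi(v_1)$ or $\phi(v_2)$, and the witnesses, and finally verifies all the conflict-free conditions above.

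I expect the main obstacle to be exactly this pigeonhole for $\phi(u_0)$: a $3$-vertex has a list of only $d_G(u_0)+2=5$ colors, against up to six constraints (properness with its three neighbors, conflict-freeness at $u_3$, and conflict-freeness at the two $2$-vertices $u_1,u_2$, which demand $\phi(u_0)\notin\{\phi(v_1),\phi(v_2)\}$), so the argument cannot reduce to one-line counting and needs the case analysis above, organized so that in every case one manufactures the missing coincidence. The structural inputs that make it work are $d_G(v_1)=3$ --- which keeps $v_1$ inexpensive (small list, but it also imposes little), and in particular lets us delete $v_1$ to gain a free choice of its color --- and Claim~\ref{claim:3plus neighbor} forcing $d_G(u_3)\geq 3$, which in the boundary case $d_G(u_3)=3$ removes one constraint outright.
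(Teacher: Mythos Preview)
Your plan diverges from the paper's in one decisive choice: you delete $\{u_0,u_1,u_2,v_1\}$, whereas the paper deletes only $\{u_0,u_1,u_2\}$ and leaves $v_1$ in $G'$. That single change turns the paper's argument into a three-line greedy extension with no case split. Because $d_G(v_1)=3$, in $G'$ the vertex $v_1$ has degree $2$, so the conflict-free property of $\phi$ at $v_1$ already forces $\phi(z_1)\neq\phi(z_2)$; consequently, whatever color $u_1$ later receives, $v_1$ has two distinct colors among its three $G$-neighbors, and no witness at $v_1$ needs protecting. The paper then simply takes $\phi(u_0)\in L(u_0)\setminus\{\phi(u_3),c_{u_3},\phi(v_1),\phi(v_2)\}$, $\phi(u_2)\in L(u_2)\setminus\{\phi(u_0),\phi(v_2),c_{v_2}\}$, and $\phi(u_1)\in L(u_1)\setminus\{\phi(u_0),\phi(v_1),\phi(u_2)\}$; conflict-freeness at $u_0$ and $v_1$ holds because each is a $3$-vertex seeing at least two colors.

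By excising $v_1$ you forfeit the free inequality $\phi(z_1)\neq\phi(z_2)$ and pick up two new witness constraints $c_{z_1},c_{z_2}$, which is exactly the source of your bottleneck. Your proposed resolution does not actually close it. In the branch $d_G(u_3)=3$, dropping $c_{u_3}$ still leaves the five colors $\{\phi(u_1),\phi(u_2),\phi(u_3),\phi(v_1),\phi(v_2)\}$ against a $5$-list at $u_0$, and the inequality $\phi(u_1)\neq\phi(v_1)$ you invoke says two of these colors are \emph{distinct}, not that two coincide --- it does not shrink the forbidden set. Likewise, if one instead colors $u_0$ early and $u_1$ last, then in the sub-case $\phi(z_1)=\phi(z_2)$ together with $\phi(u_2)=\phi(u_3)$ the vertex $u_1$ must avoid the four colors $\phi(u_0),\phi(v_1),\phi(z_1),\phi(u_2)$ from its $4$-list, with no coincidence among them guaranteed by what you have fixed so far. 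The ``force a coincidence'' programme for $d_G(u_3)\geq 4$ is only sketched and faces the same difficulty. These sub-cases may well be completable with further case work exploiting the spare choices for $\phi(v_1)$ and $\phi(u_0)$, but as written the plan has a genuine gap; the paper's smaller deletion set sidesteps the whole issue by using the hypothesis $d_G(v_1)=3$ directly.
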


\begin{proof}
	For $i=1,2$, let $v_i$ be the neighbor of $u_i$ distinct from $u_0$.
    By Claim~\ref{claim:3vertex 2thread}, $d_G(v_i)\geq 3$ for $i=1,2$.
    Suppose to the contrary that $d_G(v_1)=3$.
	We may assume that $|L(u_0)|=5$ and $|L(u_1)|=|L(u_2)|=4$.
	Let $G'=G-\{u_0,u_1,u_2\}$.
    If $G'$ has an isolated vertex $v$, then it must be that $v=v_1=v_2=u_3$ and $V(G)=\{u_0,u_1,u_2,u_3\}$.
    These imply that $G$ is isomorphic to $\Theta_{1, 2\star 2, 3\star 0, 4\star 0}$, and $G$ is proper conflict-free $L$-colorable by Proposition~\ref{prop:theta graph}, a contradiction.
    Thus, $G'$ has no isolated vertices.
    Let $\phi$ be a proper conflict-free $L$-coloring of $G'$.
	For $v\in \{u_3,v_2\}$, let $c_v$ be a color in $\mathcal{U}_\phi(v,G')$.
	We extend $\phi$ to an $L$-coloring of $G$.
	
	We choose $\phi(u_0)\in L(u_0)\setminus \{\phi(u_3), c_{u_3}, \phi(v_1), \phi(v_2)\}$, choose $\phi(u_2)\in L(u_2)\setminus \{\phi(u_0),\phi(v_2),c_{v_2}\}$, and then choose $\phi(u_1)\in L(u_1)\setminus \{\phi(u_0),\phi(v_1),\phi(u_2)\}$.
	By the choice of colors, $\phi$ is a proper $L$-coloring such that $\mathcal{U}_\phi(v,G)\neq \emptyset$ for every $v\in V(G)\setminus \{u_0,v_1\}$.
	Each of $u_0$ and $v_1$ has three neighbors with at least two colors, and hence we have $\mathcal{U}_\phi(u_0,G)\neq \emptyset$ and $\mathcal{U}_\phi(v_1,G)\neq \emptyset$.
	Thus, $\phi$ is a proper conflict-free $L$-coloring of $G$, a contradiction.
\end{proof}

\begin{claim}\label{claim:45vertex 2thread}
	For $k\in \{4,5\}$, every $k$-vertex is incident with at most $k-2$ 2-threads of $G$.
\end{claim}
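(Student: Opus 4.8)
The plan is to argue by contradiction in the style of the earlier claims, reducing a $k$-vertex ($k\in\{4,5\}$) that is incident with $k-1$ $2$-threads. Let $u_0$ be such a vertex with neighbors $u_1,\dots,u_k$, where each of $u_1,\dots,u_{k-1}$ lies on a $2$-thread; write $u_iv_i$ for the $2$-thread starting at $u_i$ and let $x_i$ be the other neighbor of $v_i$ (so $x_i$ may equal $u_0$ in principle, but Claim~\ref{claim:pendant cycle} rules out short pendant cycles, forcing the $\{u_i,v_i\}$ to be distinct from one another). Among all counterexamples choose $u_0$ with $k$ minimum. Delete $u_0,u_1,\dots,u_{k-1},v_1,\dots,v_{k-1}$ to get $G'$. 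First I would dispose of isolated vertices of $G'$: such a vertex $v$ would have all its neighbors in $\{u_k,x_1,\dots,x_{k-1}\}$, and by the minimality of $k$ together with Claims~\ref{claim:3plus neighbor}, \ref{claim:3thread 1}, \ref{claim:3thread 2} this forces $G$ to be one of the finite/explicit graphs $K_{2,r}$ or $\Theta_{1,2\star r,3\star s,4\star t}$ handled by Propositions~\ref{prop:complete bip} and \ref{prop:theta graph}, a contradiction. (One should double-check that a $2$-thread here, as opposed to the $3$-threads of Claim~\ref{claim:3thread 1}, still lets the $\Theta$-graph hypotheses $r+s\ge1$, $r+s+t\ge2$ be satisfied; since $k\ge4$ gives at least three threads at $u_0$ this is fine.)

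With $G'$ having no isolated vertex, apply Proposition~\ref{prop:c5} and minimality to get a proper conflict-free $L$-coloring $\phi$ of $G'$. For each $v\in\{u_k,x_1,\dots,x_{k-1}\}$ pick $c_v\in\mathcal{U}_\phi(v,G')$. Build reduced lists: for $v_i$ remove $\{\phi(x_i),c_{x_i}\}$ (leaving $|L^*(v_i)|\ge2$), for $u_i$ ($1\le i\le k-1$) keep the full list $|L(u_i)|=4$, and for $u_0$ remove $\phi(u_k)$ and $c_{u_k}$, leaving $|L^*(u_0)|\ge k-1$. The coloring strategy mirrors Claims~\ref{claim:3thread 1} and \ref{claim:3thread 2}: for each $i\le k-1$ let $C(u_i)$ be a $2$-element subset of $L^*(u_i)\setminus L^*(v_i)$ (possible since $|L(u_i)|=4$ and $|L^*(v_i)|\le2$), then extend $\phi$ to $u_0,u_1,\dots,u_{k-1}$ so that $\phi(u_i)\in C(u_i)$ and $\mathcal{U}_\phi(u_0,G)\ne\emptyset$. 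Fixing a color $\alpha\in L^*(u_0)$ and setting $U_\alpha=\{u_i:\alpha\in C(u_i)\}$, one case-splits on $|U_\alpha|$: if $|U_\alpha|$ is large, color all of $U_\alpha$ with $\alpha$, color the rest avoiding $\phi(u_k)$, and pick $\phi(u_0)\in L^*(u_0)$ distinct from all the $\phi(u_i)$ (room exists because collapsing $U_\alpha$ to one color reduces the forbidden set below $|L^*(u_0)|$), keeping $\phi(u_k)$ unique at $u_0$; if $|U_\alpha|$ is small, set $\phi(u_0)=\alpha$, observe $\{u_k\}\cup U_\alpha$ has at most $3$ elements so either they are monochromatic (then steer one remaining $\phi(u_i)$ to a new color to make it unique at $u_0$) or some color $\gamma$ occurs exactly once there (then force every $\phi(u_i)\ne\gamma$). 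The precise threshold for "large" depends on $|L^*(u_0)|\ge k-1$ and on $|C(u_i)|=2$; since $k\le5$ this is a short finite check. Finally color each $v_i$: choose $\phi(v_i)\in L^*(v_i)\setminus\{\phi(u_0),\phi(u_i)\}$ — possible because $|L^*(v_i)|\ge2$ only if $\phi(u_0)=\phi(u_i)$ is ruled out, which it is since $\phi(u_0)\in L^*(u_0)$ while $\phi(u_i)\in C(u_i)\subseteq L^*(u_i)$ may coincide, so one really needs $|L^*(v_i)|\ge3$ here or a separate argument — see below. Then $\phi(u_i)\notin L^*(v_i)$ gives $\phi(u_i)\in\mathcal{U}_\phi(v_i,G)$, and $c_{x_i}$ stays unique at $x_i$, $c_{u_k}$ stays unique at $u_k$, so $\phi$ is a proper conflict-free $L$-coloring of $G$, the desired contradiction.

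The main obstacle I anticipate is exactly the last coloring step for the $v_i$'s. With $|L^*(v_i)|$ only guaranteed to be $2$, avoiding both $\phi(u_0)$ and $\phi(u_i)$ is impossible in general, so the argument must either (a) arrange the extension to $u_0,\dots,u_{k-1}$ so that $\phi(u_i)$ is always chosen outside $L^*(v_i)$ — which is why $C(u_i)$ is taken inside $L^*(u_i)\setminus L^*(v_i)$, making $\phi(u_i)\ne$ anything in $L^*(v_i)$ automatic, so only $\phi(u_0)$ needs avoiding and $|L^*(v_i)|\ge2$ suffices once we also know $\phi(u_0)\notin$ one of the two colors — or (b) note $\phi(u_0)$ may well lie in $L^*(v_i)$, in which case we need $|L^*(v_i)\setminus\{\phi(u_0)\}|\ge1$, true, and $\phi(v_i)\ne\phi(u_i)$ is then free since $\phi(u_i)\notin L^*(v_i)$. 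So option (a)/(b) resolves it, and the real care is just in verifying that the case analysis on $|U_\alpha|$ leaves enough colors at $u_0$; bundling $k\in\{4,5\}$ together rather than general $k$ is what keeps that bookkeeping finite and clean. I would also double-check the isolated-vertex reduction invokes the minimality of $k$ correctly, since a neighbor of an isolated vertex could be some $x_i$ with $d_G(x_i)$ not yet controlled — this is where Claims~\ref{claim:3plus neighbor}--\ref{claim:3thread 2} must be applied in the right combination.
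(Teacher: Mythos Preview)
Your proposal has a genuine gap at the conflict-freeness verification for the $v_i$'s. You assert that ``$\phi(u_i)\notin L^*(v_i)$ gives $\phi(u_i)\in\mathcal{U}_\phi(v_i,G)$'', but this is false: the neighbors of $v_i$ in $G$ are $u_i$ and $x_i$, so what you need is $\phi(u_i)\ne\phi(x_i)$, not $\phi(u_i)\notin L^*(v_i)$. Since $\phi(x_i)$ was one of the two colors you \emph{removed} from $L(v_i)$ to form $L^*(v_i)$, the set $L(u_i)\setminus L^*(v_i)$ may well contain $\phi(x_i)$, and hence so may your $C(u_i)$. Nothing in your $U_\alpha$ case analysis prevents $\phi(u_i)=\phi(x_i)$, which would leave $\mathcal{U}_\phi(v_i,G)=\emptyset$. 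The obvious fix --- also removing $\phi(x_i)$ from $L(u_i)$ --- drops $|L^*(u_i)|$ to $3$, and then $|L^*(u_i)\setminus L^*(v_i)|$ can be as small as $1$, so you can no longer form a $2$-element $C(u_i)$, and the whole $U_\alpha$ machinery from Claims~\ref{claim:3thread 1}--\ref{claim:3thread 2} collapses. The underlying reason the $3$-thread argument does not transfer is that there the middle vertex $v_i$ has two \emph{freshly colored} neighbors $u_i,w_i$, whereas here $v_i$ has one freshly colored neighbor $u_i$ and one \emph{precolored} neighbor $x_i$, costing an extra constraint on $\phi(u_i)$.

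The paper's proof uses a genuinely different extension strategy for exactly this reason. It sets $L^*(u_i)=L(u_i)\setminus\{\phi(x_i)\}$ (so conflict-freeness at $v_i$ is built in), accepts that $\phi(u_i)\in L^*(u_i)\setminus L^*(v_i)$ may be forced to a single color for $i\le k-2$, and then case-splits on $|\{\phi(u_1),\dots,\phi(u_{k-2}),\phi(u_k)\}|$: if this is $\ge3$, then since $d_G(u_0)\le5$ any extension to $u_{k-1}$ leaves a unique color at $u_0$ by pigeonhole; if it is $\le2$, one can choose $\phi(u_{k-1})$ to be a brand-new color, making it unique. This is where the hypothesis $k\in\{4,5\}$ is actually used, and it replaces the $U_\alpha$ bookkeeping entirely. (Also, a minor slip: an isolated vertex of $G'$ has its $G$-neighbors inside the \emph{deleted} set $\{u_0,v_1,\dots,v_{k-1}\}$, not in $\{u_k,x_1,\dots,x_{k-1}\}$; the paper identifies $G$ as $\Theta_{1,2\star0,3\star(k-1),4\star0}$ in that case.)
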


\begin{proof}
	For an integer $k\in \{4,5\}$, let $u_0$ be a $k$-vertex with neighbors $u_1,\dots ,u_k$.
	Suppose that for each $i\in \{1,\dots ,k-1\}$, $u_i$ is contained in a 2-thread $u_iv_i$.
    We fix such a vertex $u_0$ so that the degree $k$ is as small as possible.
	Let $w_i$ be the neighbor of $v_i$ other than $u_i$.
	Let $G'=G-\{u_0\}\cup \bigcup_{i=1}^{k-1}\{u_i,v_i\}$.
    By Proposition~\ref{prop:c5} and the minimality of $G$, $G'$ admits a proper conflict-free $L$-coloring $\phi$.
    By Claim~\ref{claim:pendant cycle}, $\{u_i,v_i\}\cap \{u_j,v_j\}=\emptyset$ for any distinct $i$ and $j$.
    If $G'$ has an isolated vertex $v$, then it must be that $N_G(v)\subseteq \{u_0,v_1,\dots ,v_{k-1}\}$.
    Furthermore, by Claims~\ref{claim:3plus neighbor} and the minimality of $k$, it follows that $N_G(v)=\{u_0,v_1,\dots ,v_{k-1}\}$.
    This implies that $G$ is isomorphic to $\Theta_{1,2\star 0, 3\star (k-1), 4\star 0}$, and hence $G$ is proper conflict-free $L$-colorable by Proposition~\ref{prop:theta graph}, a contradiction.
    
    Thus $G'$ has no isolated vertices.
	For each $v\in \{u_k, v_1,\dots ,v_{k-1}\}$, let $c_v$ be a color in $\mathcal{U}_\phi(v,G')$.
	We define a list assignment $L^*$ of $V(G)\setminus V(G')$ as follows:
	\begin{itemize}
		\item Let $L^*(u_0)=L(u_0)\setminus \{\phi(u_k),c_{u_k}\}$.
		\item For $i\in \{1,\dots ,k-1\}$, let $L^*(u_i)=L(u_i)\setminus \{\phi(w_i)\}$ and let $L^*(v_i)=L(v_i)\setminus\{\phi(w_i), c_{w_i}\}$.
	\end{itemize}
	By deleting extra colors, we may assume that $|L^*(u_0)|=k$, $|L^*(u_i)|=3$ and $|L^*(v_i)|=2$ for each $i\in \{1,\ldots, k-1\}$.
	
	For each $i \in \{1,\dots ,k-2\}$, we let $\phi(u_i)\in L^*(u_i)\setminus L^*(v_i)$.
	Note that $u_0$ has $k-1\in \{3,4\}$ colored neighbor at this time.
	For the remaining vertices, we consider the following two cases.
	
	\medskip
	\noindent
	\textit{Case 1.} $|\{\phi(u_1),\ldots, \phi(u_{k-2}), \phi(u_k)\}|\geq 3$.
	
	We first choose $\phi(u_0)\in L^*(u_0)\setminus \{\phi(u_1),\dots ,\phi(u_{k-2})\}$, choose $\phi(v_{k-1})\in L^*(v_{k-1})\setminus \{\phi(u_0)\}$ and choose $\phi(u_{k-1})\in L^*(u_{k-1})\setminus \{\phi(u_0),\phi(v_{k-1})\}$.
	Then, for each $i\in \{1,\dots ,k-2\}$, let $\phi(v_i)\in L^*(v_i)\setminus \{\phi(u_0)\}$.
	Obviously, $\phi$ is a proper $L$-coloring of $G$.
	Furthermore, since $d_G(u_0)\in \{4,5\}$ and at least three colors appear in the neighbors, we have $\mathcal{U}_\phi(u_0,G)\neq \emptyset$, and hence $\phi$ is a proper conflict-free $L$-coloring of $G$, a contradiction.
	
	\medskip
	\noindent
	\textit{Case 2.} $|\{\phi(u_1),\dots , \phi(u_{k-2}), \phi(u_k)\}|\leq 2$.
	
	We first choose $\phi(u_{k-1})\in L^*(u_{k-1})\setminus \{\phi(u_1), \ldots, \phi(u_{k-2}), \phi(u_k)\}$ and choose $\phi(v_{k-1})\in L^*(v_{k-1})\setminus\{\phi(u_{k-1})\}$.
	Then, since $\phi(u_k)\notin L^*(u_0)$, it follows that
	$|L^*(u_0)\setminus \{\phi(u_1),\dots , \phi(u_{k-2}), \phi(u_k)\}|\geq |L^*(u_0)|-1\geq k-1\geq 3$, and hence we can choose $\phi(u_0)\in L^*(u_0)\setminus (\{\phi(u_1),\dots , \phi(u_{k-2}), \phi(u_k)\}\cup \{\phi(u_{k-1}),v_{k-1}\})$.
	Finally, we let $\phi(v_i)\in L^*(v_i)\setminus\{\phi(u_0)\}$ for each $i\in \{1,\dots ,k-2\}$.
	By the choice of colors, it is easy to verify that $\phi$ is a proper conflict-free $L$-coloring of $G$, a contradiction.
\end{proof}

\begin{claim}\label{claim:4vertex 2thread}
	Let $u_0$ be a 4-vertex with $N_G(u_0)=\{u_1, u_2, u_3, u_4\}$ such that $d_G(u_1)=d_G(u_2)=d_G(u_3)=2$.
	If both $u_1$ and $u_2$ are contained in 2-threads of $G$, then the neighbor of $u_3$ other than $u_0$ is a $4^+$-vertex.
\end{claim}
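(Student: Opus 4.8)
The plan is to argue by contradiction, so suppose the neighbour $v_3$ of $u_3$ other than $u_0$ satisfies $d_G(v_3)\le 3$, and produce a proper conflict-free $L$-colouring of $G$. First, $d_G(v_3)\ne 2$: otherwise $u_3v_3$ would be a $2$-thread and $u_0$ would be incident with three $2$-threads (through $u_1,u_2,u_3$), contradicting Claim~\ref{claim:45vertex 2thread}. Hence $d_G(v_3)=3$. For $i\in\{1,2\}$ let $u_iv_i$ be a $2$-thread at $u_i$ and let $w_i$ be the neighbour of $v_i$ other than $u_i$; let $u_4$ be the fourth neighbour of $u_0$. Set $G'=G-\{u_0,u_1,u_2,u_3,v_1,v_2\}$; by Proposition~\ref{prop:c5} and the minimality of $G$ it is enough to extend a proper conflict-free $L$-colouring of $G'$ to all of $G$.

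The first real step is to check that $G'$ has no isolated vertex. If $v$ is isolated in $G'$, then inspecting the neighbourhoods of the deleted vertices gives $N_G(v)\subseteq\{u_0,v_1,v_2,u_3\}$, so $2\le d_G(v)\le 3$. If $d_G(v)=2$, then one of $u_0u_1v_1v$, $u_0u_2v_2v$ (a $4$-cycle on $2$-vertices) or $u_1v_1vv_2u_2$ (a $5$-thread) occurs, contradicting Claim~\ref{claim:pendant cycle} or Claim~\ref{claim:long thread}. If $d_G(v)=3$, then either $u_0\notin N_G(v)$ and $v$ is a $3$-vertex all of whose neighbours are $2$-vertices, contradicting Claim~\ref{claim:3plus neighbor}; or $u_0\in N_G(v)$ and at least one of the two $2$-neighbours of $v$ is some $v_i$, whose only neighbours are the $2$-vertex $u_i$ and the $3$-vertex $v$, so $v_i$ has no $4^+$-neighbour, contradicting Claim~\ref{claim:3vertex bad neighbor}. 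The same claims force $v_3\notin\{w_1,w_2\}$ and make $w_1,w_2,u_4$ into $3^+$-vertices whenever two of them coincide, so such coincidences do not disturb the extension; the only degeneracy that survives is $v_3=u_4$, in which case $v_3$ has the adjacent pair $u_0,u_3$ among its neighbours. Thus $G'$ admits a proper conflict-free $L$-colouring $\phi$, and for each $v\in\{u_4,w_1,w_2\}$ fix $c_v\in\mathcal{U}_{\phi}(v,G')$. Note $v_3$ needs no reservation: $d_G(v_3)=3$ and, whatever colour $u_3$ receives, two of the three neighbours of $v_3$ — the two lying in $G'$, or the adjacent pair $u_0,u_3$ if $v_3=u_4$ — carry distinct colours, so $v_3$ retains a conflict-free witness in $G$.

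Now extend $\phi$. Choose $\phi(u_0)\in L(u_0)\setminus\{\phi(u_4),\phi(v_3),c_{u_4}\}$, possible since $|L(u_0)|\ge 6$; then $\phi(u_0)\ne\phi(v_3)$ gives $u_3$ its witness and $\phi(u_0)\ne c_{u_4}$ keeps that of $u_4$. For $i=1,2$ choose $\phi(v_i)\in L(v_i)\setminus\{\phi(u_0),\phi(w_i),c_{w_i}\}$ and then $\phi(u_i)\in L(u_i)\setminus\{\phi(u_0),\phi(v_i),\phi(w_i)\}$: each list has at least $4$ colours and at most $3$ are excluded, and these choices are proper, give $u_i$ and $v_i$ their witnesses, and keep $c_{w_i}$ valid. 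It remains to colour $u_3$; only the two distinct colours $\phi(u_0),\phi(v_3)$ are forbidden, so at least two colours remain. With $\phi(u_1),\phi(u_2),\phi(u_4)$ now fixed, a size-$4$ multiset having no colour of multiplicity exactly one is either constant or a union of two pairs, and a short case check on the profile of $\{\phi(u_1),\phi(u_2),\phi(u_4)\}$ shows that at most one colour $s$ makes $\{\phi(u_1),\phi(u_2),s,\phi(u_4)\}$ such a multiset. Hence $u_3$ can be coloured so that $u_0$ also obtains a conflict-free witness, and $\phi$ becomes a proper conflict-free $L$-colouring of $G$, a contradiction.

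The step I expect to be the main obstacle is this last one: the five newly coloured $2$-vertices are each pinned down to within a single colour, so the conflict-free witness of $u_0$ must be extracted from the one spare colour in $L(u_3)$ — which works precisely because the set of colours for $u_3$ that would leave $u_0$ witnessless has size at most one.
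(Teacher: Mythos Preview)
Your proof is correct and follows essentially the same approach as the paper: delete $\{u_0,u_1,u_2,u_3,v_1,v_2\}$, rule out isolated vertices in $G'$, and extend a proper conflict-free $L$-colouring of $G'$ by using the spare colour at $u_3$ to secure a witness for $u_0$ (the paper colours in the order $u_1,u_2,u_0,u_3,v_1,v_2$ with the $L^*(u_i)\setminus L^*(v_i)$ trick, while you colour $u_0,v_i,u_i,u_3$, but the decisive step --- at most one value of $\phi(u_3)$ can kill $u_0$'s witness --- is identical). The one point to tighten is that $N_G(v)\subseteq\{u_0,v_1,v_2,u_3\}$ only yields $d_G(v)\le 4$, not $\le 3$; you should add that $d_G(v)=4$ would force $v$ to be adjacent to $u_3$, hence $v=v_3$, contradicting $d_G(v_3)=3$.
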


\begin{proof}
	For each $i\in \{1,2\}$, let $u_iv_i$ be the 2-thread of $G$ containing $v_i$ and let $w_i$ be the neighbor of $v_i$ other than $u_i$.
	Let $v_3$ be the neighbor of $u_3$ other than $u_0$.
	Suppose that $d_G(v_3)\leq 3$.
	By Claim~\ref{claim:45vertex 2thread}, we know that $d_G(v_3)=3$.
	
	Let $G'=G-\{u_0,u_1,u_2,u_3,v_1,v_2\}$.
    If $G'$ has an isolated vertex $v$, then it must be that $N_G(v)\subseteq \{u_0,u_3,v_1,v_2\}$.
    However, as $d_G(v_3)=3$, we have that $d_G(v)\leq 3$, a contradiction by Claim~\ref{claim:3vertex 2thread} (when $d_G(v)=3$) or Claim~\ref{claim:pendant cycle} (when $d_G(v)=2$).

    Thus, $G'$ has no isolated vertices.
    By Proposition~\ref{prop:c5} and the minimality of $G$, $G'$ admits a proper conflict-free $L$-coloring $\phi$.
    Note that two neighbors of $v_3$ in $G'$ have distinct colors.
	For each $v\in \{u_4,w_1,w_2\}$, let $c_v$ be a color in $\mathcal{U}_\phi(v,G')$.
	We define a list assignment $L^*$ as follows:
	\begin{itemize}
		\item Let $L^*(u_0)=L(u_0)\setminus \{\phi(u_4), \phi(v_3), c_{u_4}\}$,
		\item For $i\in \{1,2\}$, let $L^*(u_i)=L(u_i)\setminus \{\phi(w_i)\}$ and let $L^*(v_i)=L(v_i)\setminus\{\phi(w_i), c_{w_i}\}$.
		\item Let $L^*(u_3)=L(u_3)\setminus \{\phi(v_3)\}$.
	\end{itemize}
	By deleting extra colors, we may assume that $|L^*(u_0)|=|L^*(u_1)|=|L^*(u_2)|=|L^*(u_3)|=3$ and $|L^*(v_1)|=|L^*(v_2)|=2$.
	
	We extend $\phi$ to an $L$-coloring of $G$ by the following manner.
	We first let $\phi(u_1)\in L^*(u_1)\setminus L^*(v_1)$ and $\phi(u_2)\in L^*(u_2)\setminus L^*(v_2)$, and let $\phi(u_0)\in L^*(u_0)\setminus \{\phi(u_1),\phi(u_2)\}$.
	Since three neighbors $\{u_1,u_2,u_4\}$ are colored, either all vertices of $\{u_1,u_2,u_4\}$ are colored in the same color $\alpha$, or there exists a color $\beta$ that appears exactly once in $\{u_1,u_2,u_4\}$.
	In the former case, we let $\phi(u_3)\in L^*(u_3)\setminus \{\phi(u_0), \alpha\}$.
	In the latter case, we let $\phi(u_3)\in L^*(u_3)\setminus \{\phi(u_0), \beta\}$.
	Then we have $\mathcal{U}_\phi(u_0,G)\neq \emptyset$.
	Finally, we let $\phi(v_i)\in L^*(v_i)\setminus \{\phi(u_0)\}$ for $i\in \{1,2\}$.
    As $v_3$ is a 3-vertex with at least two distinct colors in its neighbors, we have $\mathcal{U}_{\phi}(v_3,G)\neq \emptyset$.
    It is easy to verify that $\phi$ is a proper conflict-free $L$-coloring of $G$, a contradiction.
\end{proof}

\subsection{Discharging}

We set an initial charge $c:V(G)\to \mathbb{R}$ as $c(v)=7d_G(v)-18$ for each vertex $v\in V(G)$.
We move the charges of vertices according to the following discharging rules.

\begin{enumerate}[label=(R\arabic*)]
	\item\label{rule:3} Let $u$ be a 3-vertex of $G$.
	\begin{itemize}
		\item If $u$ has two $2$-neighbors, then $u$ sends charge $\frac{3}{2}$ to each of them.
		\item If $u$ has one $2$-neighbor, then $u$ sends charge $2$ to it.
	\end{itemize}
	\item\label{rule:4plus} Let $u$ be a $4^+$-vertex of $G$.
	\begin{itemize}
		\item $u$ sends charge $2$ to each vertex of an incident maximal 3-thread.
		\item $u$ sends charge $2$ to each vertex of an incident maximal 2-thread.
		\item $u$ sends charge $\frac{5}{2}$ to an adjacent 2-vertex which has a 3-neighbor other than $u$.
		\item $u$ sends charge $2$ to an adjacent 2-vertex which has a $4^+$-neighbor other than $u$.
	\end{itemize}
\end{enumerate}

Let $c^*(v)$ be the charge of each vertex $v\in V(G)$ after the discharge.
We shall show that $c^*(v)\geq 0$ for each $v\in V(G)$.

\begin{claim}\label{claim:charge 2vertex}
	For every 2-vertex $v$, $c^*(v)\geq 0$.
\end{claim}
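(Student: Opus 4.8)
The plan is to fix an arbitrary $2$-vertex $v$ with neighbours $a$ and $b$ (distinct since $G$ is simple). Since $c(v)=7\cdot 2-18=-4$ and a $2$-vertex never sends charge under \ref{rule:3}--\ref{rule:4plus}, it suffices to show that $v$ receives a total of at least $4$. I would split into two cases according to whether $v$ is incident with a $2$-thread.

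\emph{Case 1: $v$ is incident with no $2$-thread.} Then both $a$ and $b$ are $3^+$-vertices, and I would argue by their degrees. If $d_G(a)=d_G(b)=3$, then $v$ has no $4^+$-neighbour, so Claim~\ref{claim:3vertex bad neighbor} forbids either of $a,b$ from having two $2$-neighbours; hence each of $a$ and $b$ has exactly one $2$-neighbour and sends $2$ to $v$ by \ref{rule:3}, giving total $4$. If exactly one of them, say $a$, is a $3$-vertex, then $a$ sends at least $\tfrac32$ to $v$ by \ref{rule:3}, while the $4^+$-vertex $b$ sees the $3$-neighbour $a\ne b$ of $v$ and sends $\tfrac52$ to $v$ by \ref{rule:4plus}; the total is at least $4$. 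If both $a$ and $b$ are $4^+$-vertices, then each of $a,b$ sends $2$ to $v$ by \ref{rule:4plus} (the other neighbour of $v$ being a $4^+$-vertex in each case), again giving $4$.

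\emph{Case 2: $v$ is incident with a $2$-thread.} Let $P=x_1x_2\cdots x_k$ be the component containing $v$ of the subgraph of $G$ induced on its $2$-vertices; since $G$ is connected and is not a cycle, $P$ is a path. Then $k\ge 2$, and by Claim~\ref{claim:long thread} $k\le 3$, so $P$ is a maximal $k$-thread with $k\in\{2,3\}$. Let $p$ and $q$ be the neighbours of $x_1$ and $x_k$ lying outside $P$; by maximality $d_G(p),d_G(q)\ge 3$, and since $p$ and $q$ are each incident with the $2$-thread $x_1x_2$, Claim~\ref{claim:3vertex 2thread} upgrades this to $d_G(p),d_G(q)\ge 4$. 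Moreover $p\ne q$: otherwise $p\,x_1\cdots x_k\,p$ would be a cycle of length $k+1\in\{3,4\}$ all of whose vertices other than $p$ have degree $2$, contradicting Claim~\ref{claim:pendant cycle}. Now \ref{rule:4plus} makes each of the two distinct $4^+$-vertices $p$ and $q$ send charge $2$ to every vertex of the incident maximal $k$-thread $P$, and in particular to $v$; so $v$ receives at least $4$.

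In all cases $c^*(v)=c(v)+(\text{charge received by }v)\ge -4+4=0$. The proof is mainly careful bookkeeping; the two places that need genuine care are invoking Claim~\ref{claim:3vertex bad neighbor} to exclude the worst subcase of two adjacent $3$-vertices (where each would otherwise contribute only $\tfrac32$), and correctly identifying the maximal thread through $v$ so that the thread bullets of \ref{rule:4plus} are the ones that apply — these are the main, though routine, obstacles.
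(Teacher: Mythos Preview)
Your proof is correct and follows essentially the same approach as the paper's: split according to the length of the maximal thread through $v$, use Claim~\ref{claim:3vertex bad neighbor} for the case of two $3$-neighbours, the mixed $\tfrac32+\tfrac52$ count when exactly one neighbour is a $3$-vertex, and Claim~\ref{claim:3vertex 2thread} plus \ref{rule:4plus} for longer threads. Your write-up is in fact slightly more careful than the paper's in separating the sub-cases and in explicitly invoking Claim~\ref{claim:pendant cycle} to ensure $p\neq q$.
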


\begin{proof}
    Let $P$ be a maximal thread of $G$ that contains $v$, which is incident with two $3^+$-vertices $x$ and $y$. By Claim~\ref{claim:long thread}, $k:=|V(P)|\leq 3$.
	
	If $k\in \{2,3\}$, then by Claim~\ref{claim:3vertex 2thread}, we have $d_G(x) \geq 4$ and $d_G(y) \geq 4$.
	Thus, by \ref{rule:4plus}, each of $x$ and $y$ sends charge $2$ to $v$, and hence $c^*(v)=c(v)+2\times 2=14-18+4=0$. 
    Suppose that $k=1$, that is, $x$ and $y$ are two neighbors of $v$. 
	If $d_G(x)= d_G(y) = 3$, then Claim~\ref{claim:3vertex bad neighbor} implies that both $x$ and $y$ are 3-vertices with exactly one 2-neighbor.
	Thus, by \ref{rule:3}, each of $x$ and $y$ sends charge $2$ to $v$, and hence $c^*(v)=c(v)+2\times 2=14-18+4=0$. Thus, without loss of generality, we may assume that $d_G(y) \geq 4$. Then by \ref{rule:3} and \ref{rule:4plus}, $x$ sends charge at least $\frac{3}{2}$ to $v$ and $y$ sends charge $\frac{5}{2}$ to $v$, and hence $c^*(v)\geq c(v)+\frac{3}{2}+\frac{5}{2}=14-18+4=0$.
\end{proof}

\begin{claim}\label{claim:charge 3vertex}
	For every 3-vertex $v$, $c^*(v)\geq 0$.
\end{claim}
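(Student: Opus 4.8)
The goal is to show that after discharging, every 3-vertex has non-negative final charge. The initial charge of a 3-vertex $v$ is $c(v) = 7 \cdot 3 - 18 = 3$. A 3-vertex never receives charge under the rules (only $2$-vertices receive), so I must bound the total charge that $v$ sends out. By rule~\ref{rule:3}, $v$ only sends charge if it has a $2$-neighbor: it sends $\frac{3}{2}$ to each of two $2$-neighbors (total $3$), or $2$ to a single $2$-neighbor (total $2$), or nothing if it has no $2$-neighbor. So I would split into cases on the number of $2$-neighbors of $v$.

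First, if $v$ has at most one $2$-neighbor, then $v$ sends at most $2$, so $c^*(v) \geq 3 - 2 = 1 > 0$, done. The only nontrivial case is when $v$ has two $2$-neighbors, say $u_1$ and $u_2$, with the third neighbor $u_3$. Here $v$ sends $\frac{3}{2} + \frac{3}{2} = 3$, so $c^*(v) = 3 - 3 = 0$, which is exactly non-negative — but I must also check that $v$ does not fall under any other sending rule. By Claim~\ref{claim:3vertex 2thread}, a $3$-vertex is not incident with a $2$-thread, so $u_1$ and $u_2$ are not contained in a $2$-thread; this is consistent with rule~\ref{rule:3} (which treats $2$-neighbors of a $3$-vertex as isolated $2$-vertices, not as thread-members of a $3^+$-vertex), and confirms there is no double-counting with \ref{rule:4plus}. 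I should also invoke Claim~\ref{claim:3vertex bad neighbor} to note that in this configuration $u_3$ must be a $3^-$-vertex is impossible in a way that matters — actually the key point is simply that $v$ sends exactly $3$ and nothing more, so $c^*(v) = 0$.

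The main (minor) obstacle is bookkeeping: making sure the three bullets of \ref{rule:3} and the four bullets of \ref{rule:4plus} are mutually exclusive for charge leaving a $3$-vertex, so that a $3$-vertex with two $2$-neighbors sends precisely $3$ and not more. Since \ref{rule:4plus} applies only to $4^+$-vertices, a $3$-vertex sends charge solely via \ref{rule:3}, and the two sub-bullets of \ref{rule:3} are exclusive (they are distinguished by the number of $2$-neighbors). Hence the worst case is two $2$-neighbors, giving $c^*(v) = 0$. This completes the proof of the claim.
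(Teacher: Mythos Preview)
Your approach is the same as the paper's: the initial charge is $c(v)=7\cdot 3-18=3$, only rule~\ref{rule:3} can make a $3$-vertex send charge, and the worst case is two $2$-neighbors giving $c^*(v)=3-2\cdot\tfrac{3}{2}=0$. The one genuine omission is that you never rule out the possibility that $v$ has \emph{three} $2$-neighbors; the paper invokes Claim~\ref{claim:3plus neighbor} for exactly this, and without it your case split (``at most one'' vs.\ ``two'') is incomplete. Your digressions into Claims~\ref{claim:3vertex 2thread} and~\ref{claim:3vertex bad neighbor} are unnecessary here---those structural facts are what make the $2$-vertex charge analysis (Claim~\ref{claim:charge 2vertex}) work, not this one.
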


\begin{proof}
	By Claim~\ref{claim:3plus neighbor}, $v$ has at most two 2-neighbors.
	If $v$ has two 2-neighbors, then $c^*(v)=c(v)-2\times \frac{3}{2}=21-18-3=0$ by \ref{rule:3}.
	Otherwise, $c^*(v)\geq c(v)-2=21-18-2>0$ by \ref{rule:3}.
\end{proof}

\begin{claim}\label{claim:charge 4vertex}
	For every $4^+$-vertex $v$, $c^*(v)\geq 0$.
\end{claim}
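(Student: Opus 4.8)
The plan is to verify $c^*(v) \geq 0$ for an arbitrary $k$-vertex $v$ with $k \geq 4$, where the initial charge is $c(v) = 7k - 18$. I would organize the argument around counting how much charge $v$ gives away. Each of the four sub-rules of \ref{rule:4plus} sends either $2$ or $\frac{5}{2}$ across an edge incident with $v$, so the total outgoing charge is at most $\frac{5}{2}$ times the number of neighbors of $v$ that are $2$-vertices or the first vertex of an incident maximal $2$- or $3$-thread. Crucially, each neighbor $u_i$ of $v$ receives charge from $v$ \emph{at most once} (a $2$-neighbor triggers exactly one of the four sub-rules, and a thread has only one vertex adjacent to $v$), so the outgoing charge is at most $\frac{5}{2} \cdot (\text{number of ``light'' neighbors of } v)$.

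The key steps: First, let $t$ denote the number of neighbors of $v$ to which $v$ sends charge, and split into the case $t \leq k-3$ and the case $t \in \{k-2, k-1\}$ (note $t = k$ is impossible: Claim~\ref{claim:3plus neighbor} forbids all neighbors being $2$-vertices, and if all $k$ neighbors started threads that would in particular make them $2$-vertices). If $t \leq k-3$, then $c^*(v) \geq 7k - 18 - \frac{5}{2}(k-3) = \frac{9k}{2} - \frac{21}{2}$, which is nonnegative for $k \geq 3$, so this case is immediate. Second, handle $t = k-1$: here all but one neighbor is light. By Claim~\ref{claim:3thread 1}, at most $k-3$ of the incident threads are $3$-threads, and by Claim~\ref{claim:3thread 2}, at most $k-4$ of the $k-1$ light neighbors start $3$-threads, so at least three of the light neighbors either start maximal $2$-threads or are $2$-vertices with a $4^+$-neighbor --- those receive only charge $2$, not $\frac{5}{2}$. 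Thus the outgoing charge is at most $2(k-1) + \frac{1}{2}(k-4)$, and $c^*(v) \geq 7k - 18 - 2(k-1) - \frac{1}{2}(k-4) = \frac{9k}{2} - 14$, nonnegative for $k \geq 4$ (with $k=4$ needing the bound to be tight, which it is: $\frac{9\cdot 4}{2} - 14 = 4 \geq 0$; actually let me recheck --- for $k=4$, $t=3$ light neighbors, at most $k-4 = 0$ of them start $3$-threads, so all three receive charge exactly $2$ plus possibly the special $\frac52$ rule applies; but Claim~\ref{claim:4vertex 2thread} and Claim~\ref{claim:3vertex bad neighbor}-type constraints restrict which sub-rules fire). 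Third, handle $t = k-2$ similarly, where Claim~\ref{claim: number of 2-neighbors}'s analogue (here Claim~\ref{claim:3thread 1}) again bounds $3$-threads and forces enough neighbors into the cheaper rules.

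The main obstacle I anticipate is the boundary cases $k = 4$ and $k = 5$ with $t = k-1$, where the charge bound $7k - 18$ is small ($10$ and $17$ respectively) and nearly all of it must be redistributed. For $k=4$ with $t=3$: the reserved additional claims Claim~\ref{claim:45vertex 2thread} (at most $k-2 = 2$ incident $2$-threads), Claim~\ref{claim:4vertex 2thread} (if two of $u_1,u_2,u_3$ are in $2$-threads then $u_3$'s far neighbor is a $4^+$-vertex, preventing the $\frac52$-rule), and Claim~\ref{claim:3vertex bad neighbor} together must be combined to show that the total outgoing charge from a $4$-vertex never exceeds $10$ --- concretely, one shows that at most one of the three light $2$-neighbors can invoke the $\frac52$ sub-rule, or that invoking it forces other neighbors to be heavier than $2$-vertices. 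I would carefully enumerate: how many of the $\leq k-1$ light neighbors are ``middle vertices'' of maximal $2$-threads versus start maximal $3$-threads versus are $2$-vertices adjacent to a $3$-vertex versus a $4^+$-vertex, then bound the sum rule by rule. I expect the inequality to close with a small slack in all cases except possibly $k=4$, $t=3$, where it is exactly tight and leans on Claim~\ref{claim:4vertex 2thread} and Claim~\ref{claim:45vertex 2thread}. Once $c^*(v) \geq 0$ is established for every vertex, the final contradiction with ${\rm mad}(G) < \frac{18}{7}$ follows exactly as in the proof of Theorem~\ref{thm:mad 10 over 3}, by summing charges: $0 \leq \sum_v c^*(v) = \sum_v (7 d_G(v) - 18) = 14|E(G)| - 18|V(G)|$, giving ${\rm mad}(G) \geq \frac{2|E(G)|}{|V(G)|} \geq \frac{18}{7}$, a contradiction.
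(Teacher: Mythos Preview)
Your proposal rests on a misreading of rule~\ref{rule:4plus}. The first two sub-rules say that a $4^+$-vertex $v$ sends charge $2$ to \emph{each vertex} of an incident maximal $3$-thread (respectively $2$-thread), not just to the adjacent vertex. Hence along an edge leading into a maximal $3$-thread $v$ gives away $2\cdot 3 = 6$, and along an edge into a maximal $2$-thread it gives away $2\cdot 2 = 4$. The paper's own accounting confirms this: with $n_0,n_1,n_2,n_3$ counting the four types of light neighbors, the formula is $c^*(v) = c(v) - 2n_0 - \tfrac{5}{2}n_1 - 4n_2 - 6n_3$. Your blanket bound of ``at most $\tfrac{5}{2}$ per light neighbor'' is therefore false, and every inequality built on it (e.g.\ $c^*(v) \geq 7k - 18 - \tfrac{5}{2}(k-3)$ in the $t\le k-3$ case, and the $2(k-1)+\tfrac12(k-4)$ estimate in the $t=k-1$ case) underestimates the outgoing charge. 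For a concrete failure: take $k=4$ with $n_2=2$, $n_0=1$, $n_1=n_3=0$ (allowed by Claims~\ref{claim:3plus neighbor}, \ref{claim:3thread 2}, \ref{claim:45vertex 2thread}, \ref{claim:4vertex 2thread}); then $v$ sends out $4+4+2=10$ and $c^*(v)=0$, whereas your bound would predict at most $\tfrac{5}{2}\cdot 3 = 7.5$ sent and $c^*(v)\ge 2.5$.

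Once the per-edge costs are corrected to $2,\tfrac{5}{2},4,6$, the case $t\le k-3$ no longer closes trivially, and the analysis must track $n_2$ and $n_3$ separately. The paper does exactly this: it splits into $k=4$, $k=5$, and $k\ge 6$, and within each range enumerates the feasible tuples $(n_0,n_1,n_2,n_3)$ permitted by Claims~\ref{claim:3plus neighbor}, \ref{claim:3thread 1}, \ref{claim:3thread 2}, \ref{claim:45vertex 2thread}, and \ref{claim:4vertex 2thread}, checking $2n_0+\tfrac{5}{2}n_1+4n_2+6n_3 \le 7k-18$ in each. Your outline identifies the right structural claims, but the arithmetic must be redone against the correct charges; in particular, for $k=4$ the tight cases are $(n_2,n_3)=(1,1)$ and $(n_0,n_2)=(1,2)$, both giving exactly $10$, and Claim~\ref{claim:4vertex 2thread} is what rules out $(n_1,n_2)=(1,2)$ (which would cost $10.5$).
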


\begin{proof}
	Let $v$ be a $k$-vertex for some $k\geq 4$.
	We define $n_0$, $n_1$, $n_2$ and $n_3$ as follows:
	\begin{align*}
		 n_0 & :=|\{u\in N_G(v)\mid u\text{ is a 2-vertex with a }4^+\text{-neighbor other than }v\}|, \\
		 n_1 & :=|\{u\in N_G(v)\mid u\text{ is a 2-vertex with a 3-neighbor other than }v\}|, \\
		 n_2 & :=|\{u\in N_G(v)\mid u\text{ is contained in a maximal 2-thread of }G\}|, \text{\;and} \\
		 n_3 & :=|\{u\in N_G(v)\mid u\text{ is contained in a maximal 3-thread of }G\}|.
	\end{align*}
	By Claim~\ref{claim:3plus neighbor}, $v$ has at most $k-1$ 2-neighbors, and hence 
	$n_0+n_1+n_2+n_3\leq k-1$.
	According to the discharging rule \ref{rule:4plus}, we have $c^*(v)=c(v)-2n_0-\frac{5}{2}n_1-4n_2-6n_3$.
	
	Suppose that $k=4$. Claims~\ref{claim:3thread 1} and \ref{claim:45vertex 2thread} respectively implies that $n_3\leq 1$ and $n_2+n_3\leq 2$.
    If $n_0+n_1+n_2+n_3=3$, then Claim~\ref{claim:3thread 2} implies that $n_3=0$.
    If $n_0+n_1+n_2+n_3=3$ and $n_2=2$, Claim~\ref{claim:4vertex 2thread} implies that $n_1=0$.
    By these arguments, only possibilities around $v$ are that
	\begin{itemize} 
		\item $n_0+n_1+n_2+n_3\leq 2$ and $n_3\leq 1$,
		\item $n_0+n_1+n_2+n_3=3$, $n_2\leq 1$, and $n_3=0$, or
		\item $n_0=1$, $n_1=0$, $n_2=2$, and $n_3=0$.
	\end{itemize}
	For each case, we have $c^*(v)\geq c(v)-4\times 1-6\times 1=28-18-10=0$, $c^*(v)\geq c(v)-\frac{5}{2}\times 2-4\times 1=28-18-9 > 0$, and $c^*(v)=c(v)-2\times 1-4\times 2=28-18-10=0$, respectively.

	Suppose that $k=5$. Claims~\ref{claim:3thread 1} and \ref{claim:45vertex 2thread} respectively implies that $n_3\leq 2$ and $n_2+n_3\leq 3$. By Claim~\ref{claim:3thread 2}, if $n_0+n_1+n_2+n_3=4$, then $n_3 \leq 1$. Thus, only possibilities around $v$ are that 
	\begin{itemize}
		\item $n_0+n_1+n_2+n_3\leq 3$ and $n_3\leq 2$, or
		\item $n_0+n_1+n_2+n_3=4$, $n_2+n_3\leq 3$, and $n_3\leq 1$.
	\end{itemize}
	For the former case, we have $c^*(v)\geq c(v)-4\times 1-6\times 2=35-18-16>0$. For the latter case, we have $c^*(v)\geq c(v)-\frac{5}{2}\times 1-4\times 2-6\times 1=35-18-\frac{33}{2}>0$.
	
	Finally, suppose that $k\geq 6$.
	By Claims~\ref{claim:3thread 1} and \ref{claim:3thread 2}, only possibilities around $v$ are that 
	\begin{itemize}
		\item $n_0+n_1+n_2+n_3\leq k-2$ and $n_3\leq k-3$, or
		\item $n_0+n_1+n_2+n_3=k-1$ and $n_3\leq k-4$.
	\end{itemize}
	For the former case, we have $c^*(v)\geq c(v)-4\times 1-6\times (k-3)=7k-18-4-6k+18=k-4>0$.
    For the latter case, we have $c^*(v)\geq c(v)-4\times 3-6\times (k-4)=7k-18-12-6k+24=k-6\geq 0$.
\end{proof}

Combining Claims~\ref{claim:charge 2vertex}, \ref{claim:charge 3vertex}, and \ref{claim:charge 4vertex}, we have that
\[0\leq \sum_{v\in V(G)}c^*(v)=\sum_{v\in V(G)}c(v)=\sum_{v\in V(G)}(7d_G(v)-18)=14|E(G)|-18|V(G)|.\]
However, this implies that ${\rm mad}(G)\geq \frac{2|E(G)|}{|V(G)|}\geq \frac{18}{7}$, a contradiction.
This completes the proof of Theorem~\ref{thm:mad 18 over 7}.

\section{Concluding remark}
In this paper, we investigate proper conflict-free $({\rm degree}+k)$-choosability of graphs with a bounded maximum average degree.
Since we have only few examples that are `not' proper conflict-free $({\rm degree}+k)$-choosable for small $k$, we wonder if the following common strengthening of Conjectures~\ref{conj:degree plus 1} and \ref{conj:degree plus 2} still may hold.

\begin{conjecture}
    Let $G$ be a connected graph distinct from $C_5$ and let $L$ be a list assignment of $G$ such that 
    \begin{equation*}
        |L(v)|=\begin{cases}
            4\quad &\text{if}\quad d_G(v)=2\quad \text{and}\\
            d_G(v)+1\quad &\text{if}\quad d_G(v)\neq 2.
        \end{cases}
    \end{equation*}
    Then $G$ is proper conflict-free $L$-colorable.
\end{conjecture}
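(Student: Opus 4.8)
The natural plan is to run the same minimal-counterexample machinery used for Theorems~\ref{thm:mad 10 over 3} and \ref{thm:mad 18 over 7}, but to replace the discharging step — which is unavailable here, since no bound on ${\rm mad}(G)$ is assumed — by a global structural argument in the spirit of the Erd\H{o}s--Rubin--Taylor characterization of degree-choosable graphs. So one takes a minimal counterexample $(G,L)$, minimizing $|V(G)|+|E(G)|$, extracts enough reducible configurations, and then argues that the surviving graph must be essentially a Gallai tree, which one colors by hand.

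First I would establish $\delta(G)\ge 2$ (a $1$-vertex has list size $2$, and its unique neighbor is automatically a conflict-free witness after coloring $G$ minus that vertex). Next I would reduce to the $2$-connected case: if $x$ is a cut vertex, write $G=G_1\cup G_2$ with $G_1\cap G_2=\{x\}$; each $G_i$ is a smaller connected graph in which all list sizes are unchanged except that $x$ now carries a surplus (since $d_{G_i}(x)<d_G(x)$). One colors each $G_i$ — paying attention to the few small exceptions, where some $G_i$ is a $C_5$, or is tiny, which are handled using Propositions~\ref{prop:c5}, \ref{prop:complete bip} and \ref{prop:theta graph} together with the surplus at $x$ (possibly after first peeling the offending pendant structure off $G$) — and then amalgamates the two colorings, recoloring $x$ if necessary so that a single color is a witness at $x$ while the witnesses of all neighbors of $x$ are preserved. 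This bookkeeping is of exactly the flavor of Claims~\ref{claim:3plus neighbor}--\ref{claim:4vertex 2thread} and should go through.

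Having reduced to a $2$-connected $G$, I would first re-derive the local reducible configurations (short threads incident with $3^-$-vertices, two $2$-vertices with the same neighborhood, a $3^+$-vertex all of whose neighbors are $2$-vertices, and so on) with the tighter list sizes, by the same one-step extension arguments as in Sections~\ref{sec:mad 10 over 3} and \ref{sec:mad 18 over 7}. The crux is then the structural dichotomy. If $G$ is a complete graph or an odd cycle, the statement is direct: $K_n$ is $n$-choosable and every proper $L$-coloring of $K_n$ is rainbow, so the conflict-free condition is automatic; an odd cycle $C_{2k+1}$ with $k\ge 3$ has all lists of size $4$ and is proper conflict-free $4$-choosable; $C_3=K_3$; and $C_5$ is excluded by hypothesis. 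Otherwise, $G$ being $2$-connected, not complete, and not an odd cycle, it contains by the Erd\H{o}s--Rubin--Taylor structure theorem a subgraph $H$ that is an even cycle or a theta-type graph resembling $\Theta_{1,2\star r,3\star s,4\star t}$; one colors $G$ minus the interior of $H$ by minimality and then extends along $H$, using a counting argument over pairs of colors on the two hub vertices of $H$ precisely as in the proof of Proposition~\ref{prop:theta graph} to manufacture a conflict-free witness at each vertex of $H$.

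The main obstacle I anticipate is this last extension in the ``barely non-Gallai'' regime — when $G$ is, say, a long chain of $K_4$'s or odd cycles glued along edges with a single even cycle or extra chord hidden somewhere — because then every $3^+$-vertex has list size exactly $d+1$, there is essentially no slack, and one must route the single extra degree of freedom supplied by the non-Gallai structure all the way through $G$ while keeping every neighborhood conflict-free. Carrying this out uniformly, rather than through a sprawling case analysis on how $H$ attaches to the rest of the graph, is in my view the real content of the conjecture, and is presumably why it is stated here as a conjecture rather than proved.
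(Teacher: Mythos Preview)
The paper does not prove this statement: it is posed in the concluding section as an open conjecture, a common strengthening of Conjectures~\ref{conj:degree plus 2} and~\ref{conj:degree plus 1}, with no proof or proof sketch offered. There is therefore nothing in the paper to compare your proposal against.

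As for the proposal itself, you yourself concede at the end that it is a plan rather than a proof and that the decisive step is left open. Two of the earlier steps also carry genuine gaps that you pass over too quickly.

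First, the cut-vertex reduction is not routine for proper conflict-free coloring. When you merge colorings of $G_1$ and $G_2$ at a cut vertex $x$, the neighborhood of $x$ in $G$ is the union of its neighborhoods in the $G_i$, so a witness color for $x$ in $G_1$ can be cancelled by a neighbor in $G_2$. With $|L(x)|=d_G(x)+1$ there is essentially no slack to ``recolor $x$ if necessary'' while simultaneously preserving the witnesses of every neighbor of $x$ on both sides; and the exceptional sub-cases where some $G_i\cong C_5$ do not obviously fall to Proposition~\ref{prop:c5}, since the surplus you gain at $x$ is at $x$, not at the other four vertices of that $C_5$.

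Second, the extension step ``along an even cycle or theta subgraph'' is not supported by Proposition~\ref{prop:theta graph}. That proposition colors a graph that \emph{is} a theta graph, with lists of size $r+s+t+3$ at the two hubs; this is what drives the pigeonhole inequality $6t<\binom{r+s+t+3}{2}$. In your setting the hubs $x,y$ are embedded in a larger graph, so after accounting for their outside neighbors and the witness colors of those outside neighbors, the residual lists at $x,y$ are much smaller, and the counting argument collapses. Extending a partial proper conflict-free coloring across such a subgraph while maintaining witnesses at the attachment vertices is exactly the content of the conjecture, and your outline does not go beyond naming the difficulty.
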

 
\section*{Acknowledgement}
 
M. Kashima has been supported by JSPS KAKENHI JP25KJ2077.
R. \v{S}krekovski has been partially supported by the Slovenian Research Agency ARIS program P1-0383 and ARIS project J1-3002 and J1-4351. R. Xu has been supported by National Science Foundation for Young Scientists of China, Grant Number: 12401472.

\end{document}